\theoremstyle{plain}
\newtheorem{theorem}{Theorem}[section]
\newtheorem{proposition}[theorem]{Proposition}
\newtheorem{observation}[theorem]{Observation}
\newtheorem{corollary}[theorem]{Corollary}
\newtheorem{lemma}[theorem]{Lemma}
\newtheorem{conjecture}[theorem]{Conjecture}
\newtheorem{problem}[theorem]{Problem}
\newcommand{\mc}[1]{\mathcal{ #1 }}
\title{Complete and almost complete minors in double-critical
  $8$-chromatic graphs}
\author{
   \small Anders Sune Pedersen \\
    \small Dept. of Mathematics and Computer Science \\
   \small University of Southern Denmark \\
  \small Campusvej 55, 5230 Odense M, Denmark \\
\small \texttt{asp@imada.sdu.dk} \vspace{5mm} \\
  \small MR Subject Classification: 05C15, 05C69}
\begin{document}
\maketitle
\begin{abstract}
  A connected $k$-chromatic graph $G$ is said to be {\it double-critical}
  if for all edges $uv$ of $G$ the graph $G - u - v$ is
  $(k-2)$-colourable. A longstanding conjecture of Erd\H{o}s and
  Lov\'asz states that the complete graphs are the only
  double-critical graphs. Kawarabayashi, Pedersen and Toft
  [\emph{Electron. J. Combin.}, 17(1): Research Paper 87, 2010] proved
  that every double-critical $k$-chromatic graph with $k \leq 7$
  contains a $K_k$ minor. It remains unknown whether an arbitrary
  double-critical $8$-chromatic graph contains a $K_8$ minor, but in
  this paper we prove that any double-critical $8$-chromatic contains
  a $K_8^-$ minor; here $K_8^-$ denotes the complete $8$-graph with
  one edge missing. In addition, we observe that any double-critical
  $8$-chromatic graph with minimum degree different from $10$ and $11$
  contains a $K_8$ minor.
\end{abstract}
\section{Introduction and motivation}
At the very center of the theory of graph colouring is Hadwiger's Conjecture which dates back to 1942. It states that every $k$-chromatic
graph\footnote{All graphs considered in this paper are undirected,
  simple, and finite. The reader is referred to
  Section~\ref{sec:Prelim} for basic graph-theoretic terminology and
  notation.} contains a $K_k$ minor.
\begin{conjecture}[Hadwiger~\cite{MR0012237}]
If $G$ is a $k$-chromatic graph, then $G$ contains a $K_k$ minor.
\end{conjecture}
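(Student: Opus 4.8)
The statement concluding the excerpt is Hadwiger's Conjecture itself, which remains one of the central open problems of graph theory; accordingly, what follows is not a route to a full proof but the skeleton of the strategy that has succeeded in the known cases, together with the point at which it stalls. The plan is to induct on $k$. For $k \le 3$ the statement is trivial, and for $k = 4$ it follows from the classical fact that a graph is $K_4$-minor-free if and only if it has treewidth at most $2$: such a graph is $2$-degenerate, hence $3$-colourable, so a $4$-chromatic graph must contain a $K_4$ minor.

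For the inductive step I would fix a counterexample $G$ that is not $(k-1)$-colourable, has no $K_k$ minor, and is vertex-minimal with these two properties — so $G$ is $k$-chromatic and $k$-critical — and then extract as much structure as possible: standard arguments for colour-critical graphs give minimum degree at least $k-1$ and no cut vertex, and with further effort one would like to assume $G$ highly connected. The heart of the matter is then a \emph{structure theorem} for highly connected $K_k$-minor-free graphs. For $k = 5$ this is Wagner's decomposition of $K_5$-minor-free graphs into planar pieces and the Wagner graph, glued along triangles, which reduces Hadwiger's Conjecture for $k = 5$ to the Four Colour Theorem; for $k = 6$ it is the theorem of Robertson, Seymour and Thomas that a minimal counterexample is an apex graph (planar after the deletion of a single vertex), again reducing the case $k = 6$ to the Four Colour Theorem. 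In either case, once the decomposition is in hand one colours the planar (or apex) building blocks by the Four Colour Theorem, treats the bounded-size exceptional pieces directly, and reassembles the colourings across the small separators.

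The main obstacle — and exactly why the conjecture remains open for every $k \ge 7$ — is that no structure theorem of this usable strength is known for large $k$. The Graph Minors structure theorem of Robertson and Seymour does decompose $K_k$-minor-free graphs, but the bounded-genus surfaces, the vortices of bounded depth, and above all the apex sets, whose size already grows with $k$, are far too coarse to read off a $(k-1)$-colouring: collapsing a $K_k$-minor-free graph to something that is $4$-colourable plus a bounded number of extra colours simply no longer works. Even the far weaker quantitative statement that $K_k$-minor-free graphs are $O(k)$-colourable is open, the current records being $O(k \sqrt{\log k})$ (Kostochka, Thomason) and, more recently, $O(k \log \log k)$ (Delcourt, Postle). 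For this reason the realistic contribution towards the conjecture — and the one this paper makes — is to verify it, or a natural weakening such as the existence of a $K_k^-$ minor, for restricted classes such as the double-critical graphs, where the double-criticality hypothesis supplies precisely the extra local structure that the general argument lacks.
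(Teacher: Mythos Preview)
You correctly identify that the statement is Hadwiger's Conjecture, which the paper does not prove but merely states as background; accordingly there is no ``paper's own proof'' to compare against, and your decision to give a survey of the known cases and the obstacles for $k \ge 7$ rather than a purported proof is the right one. Your summary of the settled cases ($k \le 6$ via Wagner and Robertson--Seymour--Thomas, ultimately resting on the Four Colour Theorem), of the structural obstructions for larger $k$, and of the quantitative bounds (Kostochka--Thomason, Delcourt--Postle) is accurate and aligns with the paper's own discussion of the state of the art.
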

Hadwiger~\cite{MR0012237} showed that the conjecture holds for $k \leq
4$, the case $k=4$ being the first non-trivial instance of the
conjecture. Later, several short and elegant proofs for the case $k=4$
were found; see, for instance, \cite{MR1411244}. The case $k=5$ was
studied independently by Wagner~\cite{MR1513158}, who proved that the
case $k=5$ is equivalent to the Four Colour Problem. In the early
1960s, Dirac~\cite{MR0139160} and Wagner~\cite{MR0121309},
independently, proved that every $5$-chromatic graph $G$ contains a
$K_5^-$ minor, that is, $G$ contains, as a minor, a complete $5$-graph
with at most one edge missing. The case $k=5$ of Hadwiger's Conjecture
was finally settled in the affirmative with Appel and Haken's proof of
the Four Colour Theorem~\cite{MR0543792, MR0543793} (an improved proof
was subsequently published in 1997 by Robertson et
al.~\cite{MR1441258}). In 1964, Dirac~\cite{MR0162241} proved that
every $6$-chromatic graph contains a $K_6^-$ minor (see
\cite[p. 257]{MR1373659} for a short version of Dirac's proof), and,
in 1993, Robertson, Seymour and Thomas~\cite{MR1238823} proved, using
the Four Colour Theorem, that every $6$-chromatic graph contains a
$K_6$ minor. Thus, Hadwiger's Conjecture has been settled in the
affirmative for each $k \leq 6$, but remains unsettled for all $k \geq
7$. In the early 1970s, Jakobsen~\cite{MR0295963, MR0340108,
  MR0323641} proved that for $k=7,8$, and $9$ every $k$-chromatic
graph contains, as a minor, $K_7^{--}$, $K_7^-$, and $K_7$,
respectively, and these results seem to be the best obtained so far in
support of Hadwiger's Conjecture for the cases $k=7,8$, and $9$. (Here
$K_7^{-}$ denotes the complete $7$-graph with one edge missing, while
$K_7^{--}$ denotes a complete $7$-graph with two edges missing. There
are two non-isomorphic complete $7$-graphs with two edges missing.)
The interested reader is referred to \cite{JensenToft95, MR1411244}
for a thorough survey of Hadwiger's Conjecture and related
conjectures.

Another longstanding conjecture in the theory of graph colouring is
the so-called Erd\H{o}s-Lov\'asz Tihany Conjecture which dates back to
1966. This conjecture states, in an interesting special case, that the
complete graphs are the only double-critical
graphs~\cite{TihanyProblem2}. A connected $k$-chromatic graph $G$ is
\emph{double-critical} if for all edges $uv$ of $G$ the graph $G - u -
v$ is $(k-2)$-colourable.
\begin{conjecture}[Erd\H{o}s \& Lov\'asz~\cite{TihanyProblem2}]\label{conj:DC}
  If $G$ is a double-critical $k$-chromatic graph, then $G$ is
  isomorphic to $K_k$.
\end{conjecture}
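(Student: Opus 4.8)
The plan is to prove Conjecture~\ref{conj:DC} by strong induction on the chromatic number $k$. The base cases $k \le 5$ are known, so assume $k \ge 6$, assume the conjecture holds for every chromatic number smaller than $k$, and suppose for a contradiction that some double-critical $k$-chromatic graph $G$ is not isomorphic to $K_k$; among all such $G$ pick one with the fewest vertices. Note first that $G$ is vertex-critical: this is immediate from double-criticality, since if $u$ is a neighbour of $v$ then a $(k-2)$-colouring of $G-u-v$ extends (give $u$ a new colour) to a $(k-1)$-colouring of $G-v$, whence $\chi(G-z)=k-1$ for every vertex $z$. It is also classical that $\delta(G) \ge k+1$ whenever $G \not\cong K_k$, a fact that may be brought to bear on the recolouring step below.

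The core of the plan is the following reduction. For an edge $uv$ write $L_{uv} = N(u)\cap N(v)$. Since $\{u,v\}\cup L_{uv}$ induces in $G$ the join of $K_2$ with $G[L_{uv}]$, whose chromatic number is $2 + \chi(G[L_{uv}])$, we get $\chi(G[L_{uv}]) \le k-2$ for free. Suppose now that some edge $uv$ satisfies $\chi(G[L_{uv}]) = k-2$, and let $H'$ be a component of $G[L_{uv}]$ with $\chi(H') = k-2$; as $k-2 \ge 4$, $H'$ has an edge. For any edge $xy$ of $H'$ the double-criticality of $G$ at $xy$ gives a $(k-2)$-colouring of $G-x-y$; its restriction to $\{u,v\}\cup(V(H')\setminus\{x,y\})$ is a proper colouring of the join of $K_2$ with $H'-x-y$, so $\chi(H'-x-y) \le k-4$. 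Hence $H'$ is a connected double-critical $(k-2)$-chromatic graph, and the induction hypothesis yields $H' \cong K_{k-2}$. But then $\{u,v\}\cup V(H')$ is a set of $2+(k-2)=k$ pairwise adjacent vertices, so $G$ contains $K_k$ as a subgraph; since $G$ is vertex-critical, deleting any vertex outside this clique would leave chromatic number at least $k$, so $G$ has exactly $k$ vertices and $G = K_k$ --- a contradiction. Thus the whole conjecture reduces to the single assertion that every double-critical $k$-chromatic graph possesses an edge $uv$ with $\chi(G[N(u)\cap N(v)]) \ge k-2$.

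That last assertion is where the difficulty is concentrated; by the reduction just given it is in fact equivalent to the conjecture, so it cannot be obtained by any argument weaker than the conjecture itself, and the results available so far --- including the $K_8^-$-minor theorem of this paper --- amount to certifying an \emph{almost}-complete configuration (a $K_{k-2}$ with an edge missing, or a clique minor in place of a clique) exactly where one would want a full $K_{k-2}$ inside some link. The natural line of attack is a recolouring argument: fix an edge $uv$ and a $(k-2)$-colouring $c$ of $G-u-v$ with classes $V_1,\dots,V_{k-2}$ chosen to minimise a potential such as $\sum_i |V_i|^2$ or the number of edges leaving $N(u)\cup N(v)$; since $c$ does not extend to $G$, each of $u,v$ meets every class. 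If some colour, say $k-2$, avoided $L_{uv}$ entirely, one would attempt a sequence of Kempe exchanges within the bipartite graphs $G[V_i\cup V_{k-2}]$ to drive colour $k-2$ off the common neighbourhood while preserving properness, thereby freeing colour $k-2$ simultaneously for $u$ and for $v$ and contradicting $\chi(G)=k$; the role of the potential is to prevent these exchanges from merely displacing the obstruction, in the spirit of the Mozhan-type and Kostochka--Stiebitz-type arguments used elsewhere in critical-graph theory. Making these interacting Kempe chains globally consistent --- a swap that frees colour $k-2$ near $u$ can create a fresh obstruction near $v$, and the relevant chains can run through the rest of $G$ --- is precisely the open point, and for $k \ge 9$ one additionally has to feed whatever partial clique or minor structure emerges into the Jakobsen-type bounds and the modern extremal theory of $K_t$-minor-free graphs. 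I expect this recolouring/potential step, not the reduction of the preceding paragraph, to be the real obstacle.
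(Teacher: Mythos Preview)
The statement you are addressing is Conjecture~\ref{conj:DC}, which the paper presents as an \emph{open problem}, not a theorem: it is explicitly recorded as unsettled for every $k\ge 6$, and the paper contains no proof of it. There is therefore nothing in the paper to compare your argument against; the paper's actual contribution is the far weaker Theorem~\ref{th:mainTheorem2} (a $K_8^-$ \emph{minor} in the case $k=8$), obtained not by recolouring but by structural analysis of the neighbourhood graph $G_x$ of a minimum-degree vertex, combined with the J{\o}rgensen--Song extremal theorems and a case check over cubic $10$-graphs.

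Your proposal is, as you yourself concede, not a proof: you reduce the conjecture to the assertion that some edge $uv$ satisfies $\chi(G[N(u)\cap N(v)])\ge k-2$, and then state that this ``cannot be obtained by any argument weaker than the conjecture itself.'' In fact the reduction buys nothing. By Proposition~\ref{prop:kMinusTwoColourableNeighbourGraphGx}~(i), every non-universal vertex $x$ of a non-complete double-critical $k$-chromatic graph already has $\chi(G_x)\le k-3$; since $N(u)\cap N(v)\subseteq N(u)$, this forces $\chi(G[N(u)\cap N(v)])\le k-3<k-2$ for every edge once universal vertices are disposed of (and a universal vertex yields $G=K_k$ by an immediate induction, since $G-u$ is then double-critical $(k-1)$-chromatic). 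Hence your ``reduced'' statement is false in any hypothetical counterexample --- it is the conjecture again in disguise, and your inductive passage through $H'\cong K_{k-2}$, while correct, does not shrink the problem. The Kempe-chain/potential sketch you give for the missing step is plausible heuristics, but you accurately identify and do not overcome the obstruction that a swap freeing a colour near $u$ can create a fresh obstruction near $v$; that is exactly where the question has been stuck.
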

Conjecture~\ref{conj:DC}, which we call the \emph{Double-Critical
  Graph Conjecture}, is settled in the affirmative for all $k \leq 5$,
but remains unsettled for all $k \geq 6$
\cite{MR995391,MR882614,MR1221590}. As a relaxed version of the
Double-Critical Graph Conjecture the following conjecture was posed
in~\cite{KawarabayashiPedersenToftEJC2010}.
\begin{conjecture}[Kawarabayashi, Pedersen \&
  Toft~\cite{KawarabayashiPedersenToftEJC2010}]\label{conj:DCrelaxed}
  If $G$ is a double-critical $k$-chromatic graph, then $G$ contains a
  $K_k$ minor.
\end{conjecture}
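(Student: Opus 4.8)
\medskip
\noindent\textbf{Towards Conjecture~\ref{conj:DCrelaxed}.} The plan is to prove it by induction on $k$. The cases $k\le 7$ are precisely the theorem of Kawarabayashi, Pedersen and Toft quoted above; the case $k=8$ is the subject of the present paper, and below I indicate what still has to be done to complete it. So fix $k\ge 8$, assume the conjecture for all smaller chromatic numbers, let $G$ be a double-critical $k$-chromatic graph, and suppose $G\not\cong K_k$ — otherwise there is nothing to prove. The first step is to record the structural consequences of double-criticality: $G$ is highly connected (at least $6$-connected, and $(k-1)$-edge-connected), $\delta(G)\ge k+1$, and, for every edge $uv$ of $G$, $\chi(G-u-v)=k-2$ while for every proper $(k-2)$-colouring $c$ of $G-u-v$ each of the $k-2$ colour classes meets both $N(u)$ and $N(v)$ (otherwise a colour absent from, say, $N(u)$ could be given to $u$ and a fresh colour to $v$, contradicting $\chi(G)=k$). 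In particular every edge lies in many triangles and $|N(u)\cap N(v)|$ is large.

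\medskip
\noindent The engine of the induction is a single reduction: if one can find an edge $uv$ for which $H:=G[N(u)\cap N(v)]$ contains a $K_{k-2}$ minor, then, since $u$ and $v$ are each adjacent to every vertex of $N(u)\cap N(v)$ and $uv\in E(G)$, adjoining the two singleton branch sets $\{u\}$ and $\{v\}$ to the $k-2$ branch sets of that minor yields a $K_k$ minor in $G$. So everything reduces to producing an edge $uv$ with $H$ rich enough to contain $K_{k-2}$ as a minor, and I would attack this in two stages. Stage one: show that \emph{some} edge $uv$ satisfies $\chi(H)\ge k-2$, using a Kempe-chain/recolouring analysis that tracks how the $k-2$ colours of a colouring of $G-u-v$ distribute over $N(u)\setminus N(v)$, $N(v)\setminus N(u)$ and $N(u)\cap N(v)$, exploiting the colour-class fact above and the high connectivity of $G$. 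Stage two: extract a $K_{k-2}$ minor from $H$. The cleanest route would be to strengthen stage one to the statement that $H$ contains a double-critical $(k-2)$-chromatic subgraph, so that the induction hypothesis applies directly; lacking that, one falls back on the connectivity of $G$ localised inside $H$ together with the extremal function for $K_{k-2}$ minors (Mader's $5n-15$ bound for $K_7$, and the J\o rgensen / Song--Thomas-type bounds for $K_8$ and $K_9$), applied to $H$ after deleting a bounded number of low-degree vertices.

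\medskip
\noindent The hard part is stage two, and it is exactly why the conjecture is open for $k\ge 9$ and only partially settled here for $k=8$. The bound $\delta(G)\ge k+1$ is linear in $k$ with slope one, far too weak to force a $K_k$ minor by edge-density alone once $k$ is large; and although $H$ is relatively denser than $G$, merely knowing $\chi(H)\ge k-2$ does not deliver a $K_{k-2}$ minor — a $(k-2)$-critical subgraph of $H$ has minimum degree only $k-3$, whereas forcing a $K_{k-2}$ minor from density needs minimum degree of order $2k$ (indeed $\Theta(k\sqrt{\log k})$ in general). So the real work is to supply $H$ with extra structure: either a near-double-critical subgraph, in which case the induction finishes the job, or enough inherited connectivity that the Robertson--Seymour structure theorem applies and the $K_{k-2}$-minor-free alternative — essentially a bounded-genus graph with a bounded apex set — can be excluded on chromatic grounds, the disposal of the apex set being the step where the double-critical hypothesis must be used decisively. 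I would therefore begin by trying to push the connectivity of double-critical $k$-chromatic graphs from $6$ up to $\Omega(k)$, which I expect to be the pivotal sub-problem. Finally, for $k=8$ the outstanding point is narrow: rule out $\delta(G)\in\{10,11\}$ for a double-critical $8$-chromatic $G\not\cong K_8$; by the analysis of the present paper this upgrades the $K_8^-$ minor found there to a full $K_8$ minor and settles the base case $k=8$.
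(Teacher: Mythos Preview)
The statement is a conjecture, and the paper does not prove it; it establishes only the weaker $K_8^-$ conclusion for $k=8$ (Theorem~\ref{th:mainTheorem2}) and the full $K_8$ conclusion under the extra hypothesis $\delta(G)\notin\{10,11\}$ (Theorem~\ref{th:mainTheorem}). Your write-up is, as you yourself acknowledge, a programme rather than a proof, so there is no proof here to compare against the paper's---because the paper has none either.

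That said, one step of your programme is not merely hard but demonstrably unattainable in the form stated. Your Stage one asks for an edge $uv$ with $\chi(G[N(u)\cap N(v)])\ge k-2$. But Proposition~\ref{prop:kMinusTwoColourableNeighbourGraphGx}(i) gives $\chi(G_x)\le k-3$ for every vertex $x$ not joined to all other vertices, and since $G[N(u)\cap N(v)]$ is an induced subgraph of $G_u$, this forces $\chi(G[N(u)\cap N(v)])\le k-3$ whenever $u$ is non-universal. (If some vertex $x$ \emph{is} universal, then $G-x$ is easily seen to be double-critical $(k-1)$-chromatic, and the induction hypothesis together with the branch set $\{x\}$ already yields a $K_k$ minor; so one may assume no vertex is universal.) Hence the target of Stage one is off by one, and no Kempe-chain argument can reach it: the engine you propose---find a $K_{k-2}$ minor inside a common neighbourhood and cap it with $\{u\}$ and $\{v\}$---cannot run on chromatic number alone.

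The paper's actual route for $k=8$ is rather different in spirit. It does not work in a common neighbourhood $N(u)\cap N(v)$ but in the full neighbourhood graph $G_x$ of a single minimum-degree vertex $x$; it classifies the possible complements $\overline{G_x}$ (cubic or near-cubic $10$-graphs, using Propositions~\ref{prop:noIsolatedInAxy} and~\ref{prop:structureOfGx} and Observation~\ref{obs:432785932465}) and then exhibits a $K_7^-$ or $K_7$ minor in $G_x$ case by case, sometimes after routing one extra edge through $G-N[x]$ via the $6$-connectivity of $G$ and Menger's theorem. Your proposal to push the connectivity of double-critical $k$-chromatic graphs from $6$ up to $\Omega(k)$ is a sensible research question, but it is orthogonal to what is done here and would not by itself repair the Stage one obstruction above.
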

Conjecture~\ref{conj:DCrelaxed} is, of course, also a relaxed version
of Hadwiger's Conjecture, and so we call it the \emph{Double-Critical
  Hadwiger Conjecture}; in~\cite{KawarabayashiPedersenToftEJC2010}, it
was settled in the affirmative for $k \in \{ 6, 7 \}$ (without use of
the Four Colour Theorem) but it remains open for all $k \geq 8$. Very
little seems to be known about complete minors in $8$-chromatic
graphs. The best result so far in the direction of proving the
Hadwiger Conjecture for $8$-chromatic graphs seems to be a theorem
published in 1970 by Jakobsen~\cite{MR0295963}; the theorem states
that every $8$-chromatic graph contains a $K_7^-$ minor. Corollary 7.3
in~\cite{KawarabayashiPedersenToftEJC2010} states that every
double-critical $k$-chromatic graph with $k \geq 7$ contains a $K_7$
minor. In this paper we prove that every double-critical $8$-chromatic
graph contains a $K_8^-$ minor. The proof of this result is
surprisingly complicated and uses a number of deep results by other
authors.
\section{Main results}
These are our main results.
 \begin{theorem}\label{th:mainTheorem2} 
   Every double-critical $8$-chromatic graph contains a $K_8^-$ minor.
\end{theorem}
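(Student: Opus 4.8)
The plan is to argue by minimal counterexample, to reduce — using known extremal bounds — to a narrow range of minimum degrees, and then to resolve that range by combining the $K_7$-minor provided by \cite{KawarabayashiPedersenToftEJC2010} with the $6$-chromatic subgraphs obtained by deleting the two endpoints of an edge. So suppose $G$ is a double-critical $8$-chromatic graph containing no $K_8^-$ minor; then, a fortiori, $G$ has no $K_8$ minor and $G\neq K_8$. First I would collect the standard consequences of double-criticality: $G$ is both vertex-critical and edge-critical, so $\delta(G)\geq 7$ and $G$ is $2$-connected; for every edge $uv$ the graph $G-u-v$ is exactly $6$-chromatic (at most $6$ by double-criticality, at least $6$ since $\chi(G-v)=7$); and $G$ contains a $K_7$ minor by Corollary~7.3 of \cite{KawarabayashiPedersenToftEJC2010}. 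I would also push the degree bound to $\delta(G)\geq 10$, refining $\delta(G)\geq 7$ by exploiting that in any $6$-colouring of $G-u-v$ both $u$ and $v$ see all six colour classes, together with Kempe-chain exchanges about the edge $uv$, as in \cite{KawarabayashiPedersenToftEJC2010}.

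Next I would eliminate the high-degree case. If $\delta(G)\geq 12$ then $G$ has at least $6\,|V(G)|$ edges; but a theorem of J\o rgensen on $K_8$-minor-free graphs bounds the number of edges of such a graph on $n\geq 8$ vertices well below $6n$, a contradiction. (This is exactly the ``observation'' mentioned in the abstract.) Hence from now on $\delta(G)\in\{10,11\}$.

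The remaining case is the combinatorial core. I would fix a $K_7$-minor model $V_1,\dots,V_7$ of $G$ with $\sum_i|V_i|$ minimum, so that each $G[V_i]$ is a tree; since $G$ is connected the branch sets may be enlarged so that $\bigcup_i V_i=V(G)$. Then, for a well-chosen edge $uv$, the graph $G-u-v$ is $6$-chromatic and may be taken connected (double-critical $8$-chromatic graphs being at least $3$-connected by the structural results of \cite{KawarabayashiPedersenToftEJC2010}), so it carries a spanning $K_6$-minor model $W_1,\dots,W_6$ — or, to avoid the Four Colour Theorem, a spanning $K_6^-$-minor model, via Dirac's theorem that every $6$-chromatic graph has a $K_6^-$ minor. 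The objective is to build an eight-branch-set model of $G$ in which at most one pair of branch sets is non-adjacent, via one of two moves: take $\{W_1,\dots,W_6,\{u\},\{v\}\}$, possibly after absorbing $u$ or $v$ into some $W_i$; or refine $\{V_1,\dots,V_7\}$ by splitting a new branch set off a high-degree vertex. In either move the bound $\delta(G)\in\{10,11\}$, together with the rigidity of a double-critical graph near an edge — every edge lies in many triangles, and $u$ and $v$ each dominate all six colour classes of $G-u-v$ — forces almost all the needed adjacencies; where a few remain missing one re-chooses the edge or the models, and where that too fails one invokes heavier tools: Mader's extremal functions for $K_7$- and $K_7^-$-minors applied to carefully chosen vertex-deleted subgraphs, Jakobsen's theorem \cite{MR0295963} that every $8$-chromatic graph has a $K_7^-$ minor, and further structural properties of $K_8$-minor-free graphs due to J\o rgensen.

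The hard part is precisely this last step. Once $\delta(G)\in\{10,11\}$ edge counting no longer produces the minor, and one must manipulate three kinds of structure simultaneously: the $K_7$-minor model, the $K_6$- (or $K_6^-$-) minors living in the subgraphs $G-u-v$, and the constrained local structure around every edge of a double-critical graph. The configurations that survive all the quick moves are exactly those in which a single vertex, or a very small set, simultaneously governs the connections between several branch sets; excluding these is what forces the long case analysis that, as the author remarks, makes the proof surprisingly complicated.
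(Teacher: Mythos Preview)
Your reduction misses a key tool and then takes a wrong turn at the core step. First, you overlook Song's extremal theorem for $K_8^-$ minors: any graph on $n\geq 8$ vertices with more than $\lceil(11n-35)/2\rceil$ edges has a $K_8^-$ minor. This disposes of $\delta(G)\geq 11$ immediately, leaving only $\delta(G)=10$. (Also, the paper does not push $\delta\geq 10$ via Kempe chains; it proves only $\delta\geq k+1=9$ from double-criticality, and handles $\delta=9$ separately by showing the neighbourhood graph of a degree-$9$ vertex must be $\overline{C_8}+K_1$ or $\overline{C_9}$, from which a full $K_8$ minor is extracted.) More seriously, your plan to merge a global $K_7$-minor model with a $K_6^-$ minor of $G-u-v$ has an obstacle you do not address: the fact that $u$ and $v$ meet every colour class of a $6$-colouring of $G-u-v$ says nothing about adjacency between $\{u\},\{v\}$ and the branch sets $W_1,\dots,W_6$ of an \emph{arbitrary} $K_6^-$-minor model, since the model and the colouring are unrelated. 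There is no mechanism in your sketch for forcing those adjacencies, and the appeals to ``re-choosing the edge or the models'' and to Mader/Jakobsen extremal functions are not tied to any concrete configuration.

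The paper's argument for $\delta(G)=10$ is entirely different and purely local. Fix a vertex $x$ of degree $10$ and set $G_x:=G[N(x)]$. Since $|N(x)\cap N(y)|\geq 6$ for every edge $xy$, one gets $\Delta(\overline{G_x})\leq 3$, and no vertex of $\overline{G_x}$ has degree exactly $1$; so every degree in $\overline{G_x}$ lies in $\{0,2,3\}$. If $G_x$ has a vertex $v$ of degree $9$, then $G_x-v$ has nine vertices and minimum degree $\geq 5$, and Mader's theorem gives a $K_6^-$ minor there; together with $\{x\}$ and $\{v\}$ this is a $K_8^-$ minor. Otherwise $\overline{G_x}$ has all degrees in $\{2,3\}$, and after adding edges between non-adjacent $2$-vertices one reduces to a finite list of $10$-vertex graphs (the $19$ connected cubic ones, plus $23$ graphs with exactly two adjacent $2$-vertices). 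For each graph on the list one exhibits a $K_7^-$ minor in its complement by hand, in a few stubborn cubic cases first using the $6$-connectivity of $G$ to route a path through $V(G)\setminus N[x]$ and contract it into one extra edge inside $G_x$. That exhaustive enumeration is the ``surprisingly complicated'' part; it has nothing to do with juggling global minor models.
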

\begin{corollary}\label{cor:mainCorollary}
  Every double-critical $k$-chromatic graph with $k \geq 8$ contains a
  $K_8^-$ minor.
\end{corollary}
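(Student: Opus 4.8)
The plan is to derive Corollary~\ref{cor:mainCorollary} from Theorem~\ref{th:mainTheorem2} by induction on $k$, the base case $k=8$ being the theorem itself. So let $k \geq 9$ and let $G$ be a double-critical $k$-chromatic graph. The point is that it suffices to produce a single subgraph $H \subseteq G$ that is again double-critical and has chromatic number $k-1$: applying the induction hypothesis to $H$ yields a $K_8^-$ minor of $H$, and every minor of $H$ is a minor of $G$. Hence the corollary is equivalent to the assertion that every double-critical $k$-chromatic graph with $k \geq 9$ contains a double-critical $(k-1)$-chromatic subgraph, applied repeatedly until the chromatic number has dropped to $8$.

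That reduction is the only substantive step, and I expect it to be the main obstacle; it is the exact analogue of the reduction that already lets the bound ``double-critical $k$-chromatic implies $K_7$ minor'' of Kawarabayashi, Pedersen and Toft hold for all $k \geq 7$ and not merely for $k=7$, so I would first try to cite or adapt that argument. To prove it from scratch I would begin from two elementary observations about a double-critical $k$-chromatic graph $G$: it is vertex-$k$-critical (given an edge $uv$, take a $(k-2)$-colouring of $G-u-v$ and give $v$ a colour not used there to obtain a $(k-1)$-colouring of $G-u$) and it is $k$-edge-critical (give both $u$ and $v$ that same unused colour to obtain a $(k-1)$-colouring of $G-uv$). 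One must then choose the vertex to delete; the natural candidate is a vertex $v$ of minimum degree, and the work is in showing that $G-v$, or an appropriate critical subgraph of it, again satisfies the double-criticality condition. Here one would argue with a fixed $(k-1)$-colouring of $G-v$---in which, since $\chi(G)=k$, the colours appearing on $N(v)$ are forced to be pairwise distinct---together with Kempe-chain recolourings to control the effect of deleting two further vertices.

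Granting the reduction, the corollary follows in one line: starting from a double-critical $k$-chromatic graph with $k\geq 9$ we pass, subgraph by subgraph, to a double-critical $8$-chromatic graph, which by Theorem~\ref{th:mainTheorem2} has a $K_8^-$ minor, and that minor is a minor of the graph we started with. All the difficulty is concentrated in the single reduction step; the induction itself and the passage from a minor of a subgraph to a minor of the whole graph are entirely routine.
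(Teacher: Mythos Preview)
Your proposal rests entirely on the claim that every double-critical $k$-chromatic graph with $k\geq 9$ contains a double-critical $(k-1)$-chromatic subgraph, and this claim is neither proved nor, as far as I know, known. The sketch you offer does not get there: for a non-complete double-critical $k$-chromatic graph one has $\delta(G)\geq k+1$ by Proposition~\ref{prop:ElemPropertiesOfDC}(ii), so in a $(k-1)$-colouring of $G-v$ the colours on $N(v)$ are certainly \emph{not} pairwise distinct, and the Kempe-chain outline collapses at the first step. More fundamentally, there is no reason a $(k-1)$-critical subgraph of $G-v$ should inherit the double-critical property; double-criticality is a global condition on \emph{all} edges, and passing to subgraphs does not preserve it in any obvious way. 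Your analogy with Corollary~7.3 of \cite{KawarabayashiPedersenToftEJC2010} is misleading: that extension from $k=7$ to $k\geq 7$ is not obtained via any such subgraph reduction either.

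The paper's argument is entirely different and avoids the issue. From Proposition~\ref{prop:ElemPropertiesOfDC}(ii) one has $\delta(G)\geq k+1$, so for $k\geq 10$ the minimum degree is at least $11$, whence $|E(G)|\geq 11\,n(G)/2$ and Theorem~\ref{th:JorgensenAndSongOnGraphMinors}(i) (Song's edge-density bound for $K_8^-$) gives the minor directly. The case $k=8$ is Theorem~\ref{th:mainTheorem2}. The only case needing separate work is $k=9$ with $\delta(G)=10$, and there the paper argues by analysing the neighbourhood graph of a degree-$10$ vertex, in the spirit of Proposition~\ref{prop:83475342}. No induction and no double-critical subgraphs are involved.
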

In the case of minimum degree different from $10$ and $11$ we are able
to find `the edge missing in Theorem~\ref{th:mainTheorem2}'.
\begin{theorem}\label{th:mainTheorem}
  Every double-critical $8$-chromatic graph with minimum degree
  different from $10$ and $11$ contains a $K_8$ minor.
\end{theorem}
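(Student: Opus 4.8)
The plan is to combine the structure theory of double-critical graphs with a classical extremal-minor theorem, reducing everything to a single dense regime. Write $G$ for a double-critical $8$-chromatic graph; since $G$ is $8$-vertex-critical we have $\delta(G)\geq 7$, so by hypothesis $\delta(G)\in\{7,8,9\}$ or $\delta(G)\geq 12$.

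First I would dispose of the cases $\delta(G)\in\{7,8,9\}$. If $\delta(G)=7=k-1$, then $G=K_8$: a double-critical $k$-chromatic graph of minimum degree $k-1$ is $K_k$ (this can be read off from Gallai's description of the subgraph induced by the degree-$(k-1)$ vertices of a $k$-critical graph, together with double-criticality, and is recorded in \cite{KawarabayashiPedersenToftEJC2010}), and $K_8$ trivially contains a $K_8$ minor. For $\delta(G)\in\{8,9\}$ I would argue locally around a vertex $v$ with $d(v)=\delta(G)$: $8$-vertex-criticality forces every proper $7$-colouring of $G-v$ to use all $7$ colours on $N_G(v)$, and double-criticality forces $G[N_G(v)\setminus\{u\}]$ to be $6$-colourable for each $u\in N_G(v)$; hence $G[N_G(v)]$ is a graph on $8$ or $9$ vertices that is either $7$-vertex-critical or $6$-colourable. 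There is no $7$-vertex-critical graph on $8$ vertices, and only a short explicit list on $9$ vertices (headed by $C_5\vee K_4$), while in the $6$-colourable subcase one shows that a $7$-colouring of $G-v$ extends to all of $G$, contradicting $\chi(G)=8$. In each surviving local configuration the near-complete neighbourhood of $v$, the edges at $v$, and the $K_8^-$ minor supplied by Theorem~\ref{th:mainTheorem2} together yield a $K_8$ minor. (I suspect the cleaner truth is that a non-complete double-critical $8$-chromatic graph already has $\delta\geq 10$; then $\delta(G)\in\{8,9\}$ is vacuous and this step disappears.)

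It remains to treat $\delta(G)\geq 12$. Then $|E(G)|\geq 6\,|V(G)|\geq 6\,|V(G)|-20$, so by J{\o}rgensen's theorem on $K_8$ minors --- every graph on $n$ vertices with at least $6n-20$ edges either has a $K_8$ minor or is obtained from copies of $K_{2,2,2,2,2}$ by repeated clique-sums along $K_5$'s --- $G$ either has a $K_8$ minor or is one of the exceptional graphs. A clique-sum of $5$-chromatic graphs along complete subgraphs is again $5$-chromatic, so the exceptional graphs are not $8$-chromatic; hence $G$ has a $K_8$ minor, which completes the proof.

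The point of difficulty is the band $\delta(G)\in\{10,11\}$, which falls between the two techniques: there $|E(G)|$ is far too small for J{\o}rgensen's theorem (or any minimum-degree minor criterion) to apply, while the neighbourhood of a minimum-degree vertex is too large and too loosely attached to $G$ for the local analysis to close. That is precisely why those two degree values are excluded, and eliminating the gap --- best of all by showing that double-critical $8$-chromatic graphs with $\delta\in\{10,11\}$ do not exist --- is the natural next target.
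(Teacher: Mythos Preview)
Your treatment of the dense regime $\delta(G)\geq 12$ is correct and matches the paper exactly: $|E(G)|\geq 6n(G)>6n(G)-20$, and J\o rgensen's theorem gives the $K_8$ minor (the $(K_{2,2,2,2,2},5)$-cockades have exactly $6n-20$ edges, so the strict inequality already excludes them; your extra sentence about $5$-chromaticity is harmless but unnecessary).

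The low-degree side, however, has real gaps. First, the cases $\delta(G)\in\{7,8\}$ are vacuous: for a non-complete double-critical $k$-chromatic graph one has $\delta(G)\geq k+1$ (Proposition~\ref{prop:ElemPropertiesOfDC}(ii)), so here $\delta(G)\geq 9$. Your parenthetical guess $\delta\geq 10$ overshoots; the case $\delta(G)=9$ is genuinely live and must be handled.

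Second, your argument for $\delta(G)=9$ does not work. The step ``in the $6$-colourable subcase a $7$-colouring of $G-v$ extends to $G$'' is false: that $G[N(v)]$ admits \emph{some} $6$-colouring does not force a \emph{given} $7$-colouring of $G-v$ to miss a colour on $N(v)$. In fact Proposition~\ref{prop:structureOfGx}(i) gives $\chi(G_v)\leq 5$ always, so by your dichotomy you would always land in this subcase and derive a contradiction to the existence of $G$ --- which is clearly wrong. Moreover, your fallback of invoking the $K_8^-$ minor of Theorem~\ref{th:mainTheorem2} is circular: in the paper Theorem~\ref{th:mainTheorem2} is proved \emph{using} Theorem~\ref{th:mainTheorem}.

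What the paper actually does for $\delta(G)=9$ (Corollary~\ref{cor:minimumDegreeNine}) is a concrete local-plus-connectivity argument. Proposition~\ref{prop:structureOfGx}(ii) forces $\overline{G_x}$ to be a disjoint union of cycles of length $\geq 5$ and isolated vertices; on nine vertices this pins $G_x$ down to $\overline{C_8}+K_1$ or $\overline{C_9}$ (Proposition~\ref{prop:83475342}). One then uses $6$-connectivity (Proposition~\ref{prop:doubleCriticalImpliesSixConnected}(iii)) and Menger's theorem to route internally disjoint $(x,z)$-paths through $N(x)$ to a vertex $z\notin N[x]$; pigeonhole places two of these paths through consecutive vertices of the complementary cycle, and contracting that $(v_i,v_{i+1})$-path supplies the missing edge in $G_x$, after which two further contractions inside $N(x)$ produce $K_7$ there and hence $K_8$ in $G$. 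This is the missing idea in your proposal.
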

Our proofs of the above-mentioned results do not rely on the Four
Colour Theorem but they do rely on the following two deep results.
\begin{theorem}[(i) Song~\cite{MR2171368}; (ii) J\o
  rgensen~\cite{MR1283309}] \label{th:JorgensenAndSongOnGraphMinors}
Suppose $G$ is a graph on at least $8$ vertices.
\begin{itemize}
\item[\emph{(i)}]  If $G$ has more than \mbox{$\lceil (11n(G) -
    35)/2 \rceil$} edges, then $G$ contains a $K_8^-$ minor, and
\item[\emph{(ii)}] if $G$ has more than $6n(G) - 20$ edges, then $G$
  contains a $K_8$ minor.
\end{itemize}
\end{theorem}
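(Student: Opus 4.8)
The plan is to prove both parts of Theorem~\ref{th:JorgensenAndSongOnGraphMinors} by the classical extremal-function-for-minors induction on $n = n(G)$, in the spirit of Mader's arguments for $K_t$ with $t \le 7$ and their refinements by J\o rgensen for~(ii) and by Song for~(i). Write $H$ for the relevant target ($K_8^-$ or $K_8$) and $f_H$ for the corresponding threshold, $f_{K_8^-}(n) = \lceil (11n - 35)/2 \rceil$ and $f_{K_8}(n) = 6n - 20$. Suppose the statement fails, and choose a counterexample $G$ with $n(G)$ minimum, so $e(G) > f_H(n)$ while $G$ has no $H$ minor. After disposing of the small cases $n \le 9$ directly, assume $n \ge 10$; the inductive step then proceeds in three stages.

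First I would reduce to large minimum degree. If $G$ has a vertex $v$ of degree at most $5$ (for~(i)), respectively at most $6$ (for~(ii)), then $G - v$ has at least $9$ vertices, still has no $H$ minor, and has more than $f_H(n-1)$ edges, because $f_H(n) - f_H(n-1) \le 6$ in both cases; this contradicts the minimality of $G$. Hence $\delta(G) \ge 6$ in case~(i) and $\delta(G) \ge 7$ in case~(ii).

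Second I would reduce to high connectivity. Suppose $G$ has a separation $(A,B)$ with $A \setminus B \ne \emptyset \ne B \setminus A$ and $|A \cap B| \le 5$ (for~(i)), respectively $|A \cap B| \le 6$ (for~(ii)). Let $G_A$ and $G_B$ be obtained from $G[A]$ and $G[B]$ by adding all edges inside $A \cap B$. By the standard clique-sum argument (using $\delta(G) \ge 7$ to reroute the added edges through the opposite side) neither $G_A$ nor $G_B$ contains an $H$ minor, so by minimality each has at most $f_H$ edges on its own vertex set. Since $n(G_A) + n(G_B) = n + |A \cap B|$, a short calculation then yields $e(G) = e(G_A) + e(G_B) - \binom{|A \cap B|}{2} \le f_H(n)$, a contradiction. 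Hence $G$ is $6$-connected in case~(i) and $7$-connected in case~(ii).

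The remaining highly-connected dense case is the crux. Here I would first search for an edge $uv$ whose endpoints have few common neighbours: if some edge $uv$ has $|N(u) \cap N(v)|$ small enough that $G/uv$ still has more than $f_H(n-1)$ edges, then $G/uv$ is a smaller counterexample (it still has no $H$ minor), contradicting minimality. So we may assume every edge lies in many triangles, i.e.\ $G$ is everywhere locally very dense, and it remains to prove that a $6$-connected (respectively $7$-connected) graph of this kind with more than $f_H(n)$ edges must contain $K_8^-$ (respectively $K_8$) as a minor. This is precisely where the structural heart of Song's and J\o rgensen's theorems lies: one classifies the $6$-connected $K_8^-$-minor-free graphs and the $7$-connected $K_8$-minor-free graphs of near-extremal edge density, shows that they are clique-sums assembled from copies of $K_7$ together with an exceptional building block (the cocktail-party graph $K_{2,2,2,2,2} = K_{10}$ minus a perfect matching in the $K_8$ case, and an analogous graph on $9$ vertices, e.g.\ $K_{1,2,2,2,2}$, in the $K_8^-$ case), and finally verifies that every graph in this family has at most $f_H(n)$ edges, contradicting $e(G) > f_H(n)$. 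I expect this last stage to be the overwhelming bulk of the work: the first two stages are the routine Mader reductions, whereas the highly-connected case demands a careful case analysis of dense minor-free graphs and, in particular, the precise identification of the cocktail-party-type configurations that obstruct any further lowering of the thresholds.
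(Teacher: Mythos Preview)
The paper does not prove this theorem at all: it is quoted as two results from the literature (Song for part~(i), J\o rgensen for part~(ii)) and used as a black box in the proofs of Theorems~\ref{th:mainTheorem} and~\ref{th:mainTheorem2}. There is therefore no ``paper's own proof'' to compare your proposal against.

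As for the proposal itself, it is a reasonable high-level outline of the Mader-style extremal argument that Song and J\o rgensen actually use, and your identification of $K_{2,2,2,2,2}$ as the key exceptional configuration in J\o rgensen's case is correct (his extremal graphs are the so-called $(K_{2,2,2,2,2},5)$-cockades). But what you have written is a plan, not a proof: the first two reductions are routine, and you explicitly defer the entire structural classification in the highly-connected case, which is essentially all of the content of both papers. In particular, your connectivity reduction is too optimistic as stated --- adding a clique on the separator and claiming neither side acquires an $H$ minor requires more care than ``using $\delta(G)\ge 7$ to reroute the added edges'', and J\o rgensen's actual argument works with $5$-connectivity rather than $7$-connectivity and handles the separations more delicately. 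If your intent was merely to indicate why the thresholds are plausible, this is fine; if it was to supply a proof, the hard part remains entirely unwritten.
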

\begin{proof}[Proof of Theorem~\ref{th:mainTheorem}.]
  Suppose $G$ is a double-critical $8$-chromatic graph with minimum
  degree $\delta(G)$. Then, according to
  Proposition~\ref{prop:ElemPropertiesOfDC}~(ii), $\delta(G) \geq
  9$. If $\delta(G) \geq 12$, then $|E(G)| \geq 6 n(G)$ and so, by
  Theorem~\ref{th:JorgensenAndSongOnGraphMinors}~(ii), $G \geq
  K_8$. If $\delta(G)=9$, then the desired result follows from
  Corollary~\ref{cor:minimumDegreeNine}.
\end{proof}
\begin{proof}[Proof of Theorem~\ref{th:mainTheorem2}.]
  Let $G$ denote a double-critical $8$-chromatic graph. By
  Theorem~\ref{th:mainTheorem}, we may assume $\delta(G) \geq 10$. If
  $\delta(G) \geq 11$, then $|E(G)| \geq 11n(G)/2$ and so, by
  Theorem~\ref{th:JorgensenAndSongOnGraphMinors}~(i), $G \geq K_8^-$. Suppose $\delta(G) = 10$, let $x$ denote a vertex of
  degree $10$ in $G$, and define $G_x := G[N(x)]$. Then, according to
  Observation~\ref{obs:432785932465}, $\Delta(\overline{G_x}) \leq
  3$. If $\Delta(\overline{G_x}) \leq 2$, then, by
  Proposition~\ref{prop:43287530893245}, $G \geq K_8$. If
  $\Delta(\overline{G_x}) = 3$ and $G_x$ contains at least one vertex
  of degree $9$, then, by Proposition~\ref{prop:importantProposition},
  $G \geq K_8^-$. If $\Delta(\overline{G_x}) = 3$ and $G_x$ contains
  no vertex of degree $9$, then, by
  Proposition~\ref{prop:importantProposition2}, $G \geq K_8^-$. This
  completes the proof.
\end{proof}
\begin{proof}[Proof of Corollary~\ref{cor:mainCorollary}]
  Let $G$ denote a double-critical $k$-chromatic graph with $k \geq
  8$. If $k=8$ or $\delta(G) \geq 11$, then the desired result follows
  from Theorem~\ref{th:mainTheorem2} or
  Theorem~\ref{th:JorgensenAndSongOnGraphMinors}~(i), respectively. Hence, by
  Proposition~\ref{prop:ElemPropertiesOfDC}~(ii), we may assume $k=9$ and
  $\delta(G)=10$; in this case we prove $G \geq K_8^-$ by an argument
  somewhat similar to the first part of the proof of
  Proposition~\ref{prop:83475342}. The details are omitted.
\end{proof}
\section{Preliminaries and notation} \label{sec:Prelim} We shall use
standard graph-theoretic terminology and notation as defined
in~\cite{BondyAndMurty2008, DiestelGT2006} with a few additions. Given
any graph $G$, $V(G)$ denotes the vertex set of $G$ and $E(G)$ denotes
the edge set, while $\overline{G}$ denotes the complement of $G$. The
\emph{order} of a graph $G$, that is, the number of vertices in $G$,
is denoted $n(G)$, and any graph on $n$ vertices is called an
\emph{$n$-graph}. A vertex of degree $k$ in a graph $G$ is said to be
a \emph{$k$-vertex} (of $G$). Given two graphs $H$ and $G$, the
\emph{complete join} of $G$ and $H$, denoted $G + H$, is the graph obtained
from two vertex-disjoint copies of $H$ and $G$ by joining each vertex
of the copy of $G$ to each vertex of the copy of $H$. For every positive integer
$k$ and graph $G$, $kG$ denotes the graph $\sum_{i=1}^k G$. Given any
edge-transitive graph $G$, any graph, which can be obtained from $G$
by removing one edge, is denoted $G^-$. The \emph{girth} of a graph
$G$ is the length of a shortest cycle in $G$; if $G$ is acyclic, then
the girth of $G$ is said to be infinite. Given any subset $X$ of the
vertex set $V(G)$ of a graph $G$, we let $G[X]$ denote the subgraph of $G$
induced by the vertices of $X$. The set of vertices of $G$ adjacent to
$v$ is called the \emph{neighbourhood of $v$} (in $G$), and it is
denoted $N_G(v)$ or $N(v)$. The set $N(v) \cup \{ v \}$ is called the
\emph{closed neighbourhood of $v$} (in $G$), and it is denoted
$N_G[v]$ or $N[v]$. The induced graph $G[N(v)]$ is referred to as the
\emph{neighbourhood graph of $v$} (w.r.t. $G$), and it is denoted
$G_v$. Given two graphs $G$ and $H$, we say that $H$ is a \emph{minor}
of $G$ (and that $G$ has an \emph{$H$ minor}) if there is a collection
$\{ V_h \mid h \in V(H) \}$ of non-empty, disjoint subsets of $V(G)$
such that the induced graph $G[V_h]$ is connected for each $h \in
V(H)$, and for any two adjacent vertices $h_1$ and $h_2$ in $H$ there
is at least one edge in $G$ joining some vertex of $V_{h_1}$ to some
vertex of $V_{h_2}$. The sets $V_h$ are called the \emph{branch sets}
of the minor $H$ of $G$. We may write $H \leq G$ or $G \geq H$, if $G$
contains an $H$ minor. In~\cite{KawarabayashiPedersenToftEJC2010}, a
number of basic results on double-critical graphs were determined. We
will make repeated use of these results and so, for ease of reference,
they are restated here. \\

In the remaining part of this section, we let $G$ denote a
non-complete double-critical $k$-chromatic graph with $k \geq
6$. Given any edge $xy \in E(G)$, define
\begin{eqnarray*}
  A(x,y) & := & N(x) \setminus N[y] \\
  B(x,y) & := & N(x) \cap N(y) \\
  C(x,y) & := & N(y) \setminus N[x] 
\end{eqnarray*}
\begin{proposition}[\cite{KawarabayashiPedersenToftEJC2010}]
\label{prop:ElemPropertiesOfDC} ~ 
\begin{itemize} \item[\emph{(i)}] The graph $G$ does not contain a
  complete $(k-1)$-graph as a subgraph;
\item[\emph{(ii)}] the graph $G$ has minimum degree at least $k+1$, and
\item[\emph{(iii)}] for all edges \mbox{$xy \in E(G)$} and all $(k-2)$-colourings of $G - x - y$,
  the set $B(x,y)$ of common neighbours of $x$ and $y$ in $G$ contains
  vertices from every colour class, in particular,
  $|B(x,y)| \geq k-2$.
\end{itemize}
\end{proposition}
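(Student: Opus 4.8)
The plan is to deduce all three items from the defining property by short colouring arguments, handled in the order (i), (iii), (ii). The constant workhorse is the observation that $G$ is $k$-vertex-critical: for any vertex $v$ choose a neighbour $u$ (one exists, as $G$ is connected on at least $k \geq 6$ vertices), take a $(k-2)$-colouring of $G - u - v$, and give $u$ a fresh colour; this shows $\chi(G - v) \leq k-1$, and since deleting one vertex lowers the chromatic number by at most one, $\chi(G - v) = k-1 < \chi(G)$. In particular $\delta(G) \geq k - 1$, and Dirac's classical observation is available: in every $(k-1)$-colouring of $G - v$ the set $N(v)$ meets all $k-1$ colour classes. I will also use the analogous non-extendability principle for the pair $x,y$: since $\chi(G-x) = \chi(G-y) = k-1 > k-2$, a $(k-2)$-colouring of $G - x - y$ extends to neither $G - x$ nor $G - y$, so both $A(x,y) \cup B(x,y) = N(x) \setminus \{y\}$ and $B(x,y) \cup C(x,y) = N(y) \setminus \{x\}$ meet every colour class.

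For (i), suppose $G$ has a $K_{k-1}$ on a vertex set $S$; since $|V(G)| \geq \chi(G) = k > k-1 = |S|$, we have $V(G) \setminus S \neq \emptyset$. If $V(G) \setminus S$ contains an edge $pq$, then $G - p - q \supseteq G[S] = K_{k-1}$, so $\chi(G - p - q) \geq k-1$, contradicting double-criticality. Hence $V(G) \setminus S$ is independent. If some $v \in V(G) \setminus S$ is adjacent to every vertex of $S$, then $K_k \subseteq G$, whence by $k$-vertex-criticality $V(G) = V(K_k)$ and $G = K_k$, contradicting non-completeness. Otherwise every $v \in V(G) \setminus S$ has $N(v) \subsetneq S$, and colouring $S$ rainbow with $k-1$ colours and then greedily colouring the independent set $V(G) \setminus S$ (each such $v$ having at most $k-2$ forbidden colours) yields a $(k-1)$-colouring of $G$ --- again a contradiction. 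This proves (i).

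For (iii), fix an edge $xy$ and a $(k-2)$-colouring $c$ of $G - x - y$ with classes $V_1, \dots, V_{k-2}$, and suppose some class, say $V_1$, is disjoint from $B(x,y)$. By the non-extendability principle $V_1$ meets $A(x,y)$ and meets $C(x,y)$. Now choose $c$, among all $(k-2)$-colourings of $G - x - y$ admitting a class disjoint from $B(x,y)$, so as to minimise $|V_1 \cap (A(x,y) \cup C(x,y))|$ (which we just saw is positive). Minimality forbids recolouring any vertex of $V_1 \cap (A(x,y) \cup C(x,y))$ to another colour, so each such vertex --- in particular some $a \in V_1 \cap A(x,y)$ --- has a neighbour of every colour other than $1$. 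A Kempe exchange on colours $1$ and a suitably chosen colour $j$ through $a$ then, after a short argument isolating the right second colour and component, either strictly decreases $|V_1 \cap (A(x,y) \cup C(x,y))|$ while keeping a $B(x,y)$-free class (contradicting minimality) or erases colour $1$ from $N(x) \setminus \{y\}$ (respectively from $N(y) \setminus \{x\}$), after which colouring $x$ (respectively $y$) with colour $1$ gives a $(k-1)$-colouring of $G$, contradicting $\chi(G) = k$. The ``in particular'' assertion follows at once, since $B(x,y)$ must now meet each of the $k-2$ classes.

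For (ii) it remains, given $\delta(G) \geq k-1$, to exclude degrees $k-1$ and $k$. If $\deg(v) \in \{k-1, k\}$, then for each neighbour $u$ the partition $N(v) = \{u\} \cup B(u,v) \cup C(u,v)$ together with $|B(u,v)| \geq k-2$ from (iii) forces $|C(u,v)| \leq 1$; a short case analysis --- distinguishing whether some $C(u,v)$ is empty and otherwise applying (iii) repeatedly to edges inside $N[v]$ to pin down the adjacencies of $N(v)$ --- shows that the non-edges of $G[N(v)]$ form a matching, and then either $N[v]$ contains $K_{k-1}$ (contradicting (i)) or a Kempe exchange in a $(k-1)$-colouring of $G - v$ produces a $(k-1)$-colouring of $G$. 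The main obstacle is the recolouring step in (iii): choosing the second colour $j$ and confirming that the Kempe exchange does not reintroduce colour $1$ into $B(x,y)$ nor onto the neighbourhood of the other endpoint is the delicate heart of the proof, and the degree-$k$ subcase of (ii) is a smaller instance of the same difficulty.
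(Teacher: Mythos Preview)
The paper does not prove this proposition; it is quoted from \cite{KawarabayashiPedersenToftEJC2010} without proof, so there is no argument to compare against. I therefore assess your proof on its own merits.

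Your proof of (i) is correct. Your arguments for (iii) and (ii), however, contain genuine gaps: in both places you invoke a Kempe exchange but never specify which second colour to use, which component to swap, or why the swap does not reintroduce the forbidden colour---and you yourself flag this as ``the delicate heart of the proof''. The hand-wave is not benign: the naive minimisation of $|V_1\cap(A\cup C)|$ does not obviously decrease under a swap, since vertices of the other colour inside the Kempe component may land back in $A\cup C$.

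No Kempe argument is needed for (iii). Suppose colour $i$ is absent from $B(x,y)$ in a $(k-2)$-colouring $\phi$ of $G-x-y$. Define $\psi$ on $V(G)$ by $\psi(x)=i$, $\psi(y)=k-1$, $\psi(a)=k-1$ for every $a\in A(x,y)$ with $\phi(a)=i$, and $\psi=\phi$ elsewhere. The only vertices receiving the new colour $k-1$ are $y$ and the independent set $A(x,y)\cap\phi^{-1}(i)$, and by definition of $A(x,y)$ none of the latter are adjacent to $y$; meanwhile colour $i$ has been removed from $A(x,y)$ and was already absent from $B(x,y)$, so $\psi(x)=i$ is proper. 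Thus $\psi$ is a proper $(k-1)$-colouring of $G$, contradicting $\chi(G)=k$.

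With (iii) established, (ii) also finishes without Kempe chains. If $\deg(v)=k-1$, then $|B(u,v)|\geq k-2=|N(v)\setminus\{u\}|$ for every neighbour $u$, so $G[N(v)]=K_{k-1}$, contradicting (i). If $\deg(v)=k$, your observation that $\overline{G[N(v)]}$ is a matching is correct; if the matching is empty, $G[N(v)]\supseteq K_{k-1}$, contradicting (i). Otherwise pick a matching edge $\{u,w\}$: then $B(u,v)=N(v)\setminus\{u,w\}$ has exactly $k-2$ vertices, so by (iii) any $(k-2)$-colouring $\phi$ of $G-u-v$ colours them bijectively, whence $\phi(w)=\phi(b)$ for a unique $b\in B(u,v)$, forcing $bw\notin E(G)$. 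But $b,w\in N(v)$ and $w$'s only non-neighbour there is $u\neq b$, a contradiction.
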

\begin{proposition}\label{prop:antiMatchingBetweenAandB} If $G[A(x,y)]$ is a complete graph for some edge $xy \in E(G)$, then
  there is a matching of the vertices of $A(x,y)$ to the vertices of
  $B(x,y)$ in $\overline{G_x}$.
\end{proposition}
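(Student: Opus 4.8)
The plan is to rephrase the assertion as a bipartite matching problem and to settle it via Hall's theorem, where the Hall condition is verified using a proper $(k-2)$-colouring of $G-x-y$. Abbreviate $A := A(x,y)$ and $B := B(x,y)$, and note that $N(x)$ decomposes into the pairwise disjoint sets $A$, $B$ and $\{y\}$. Hence the bipartite subgraph $H$ of $\overline{G_x}$ with parts $A$ and $B$ has edge set $\{\, ab : a \in A,\ b \in B,\ ab \notin E(G)\,\}$, and a matching of $A$ into $B$ in $\overline{G_x}$ is exactly a matching of $H$ saturating $A$. By Hall's theorem it therefore suffices to prove $|N_H(S)| \ge |S|$ for every $S \subseteq A$.

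Fix $S \subseteq A$ and set $B_S := \{\, b \in B : S \subseteq N(b)\,\}$. Since $A$ and $B$ are disjoint, $N_H(S) = B \setminus B_S$, so the inequality to be established reads $|B_S| \le |B| - |S|$. Choose a $(k-2)$-colouring $\varphi$ of $G-x-y$; this exists because $G$ is double-critical. As $G[A]$, hence $G[S]$, is complete, $\varphi$ is injective on $S$ and uses a set $C_S$ of exactly $|S|$ colours there. Every vertex of $B_S$ is adjacent in $G$ to all of $S$, so it is coloured with a colour outside $C_S$; thus no vertex of $B_S$ carries a colour of $C_S$. On the other hand, Proposition~\ref{prop:ElemPropertiesOfDC}(iii) tells us that $B$ contains a vertex from every colour class, so in particular $B$ contains, for each of the $|S|$ colours in $C_S$, at least one vertex of that colour. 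These $|S|$ vertices are pairwise distinct and all lie in $B \setminus B_S$, giving $|N_H(S)| = |B \setminus B_S| \ge |S|$. Hall's theorem now supplies the desired matching.

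The argument is short and I do not anticipate a serious obstacle; the one conceptual point is to notice where the hypothesis that $G[A]$ is complete is needed. It is used only to ensure that $\varphi$ is injective on $S$, which is precisely what allows Proposition~\ref{prop:ElemPropertiesOfDC}(iii) to produce $|S|$ distinct vertices of $B \setminus B_S$. Beyond that, the points requiring a little care are the disjoint decomposition $N(x) = A \cup B \cup \{y\}$, the identity $N_H(S) = B \setminus B_S$, and the verification that the colour representatives selected in $B$ are distinct and indeed avoid $B_S$.
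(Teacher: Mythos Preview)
Your proof is correct, but it takes a somewhat more elaborate route than the paper. The paper observes directly that a $(k-2)$-colouring $\varphi$ of $G-x-y$ assigns distinct colours $1,\ldots,p$ to the vertices $a_1,\ldots,a_p$ of $A$ (since $G[A]$ is complete), and that by Proposition~\ref{prop:ElemPropertiesOfDC}(iii) each colour $i$ occurs on some vertex $b_i\in B$; since $a_i$ and $b_i$ receive the same colour they are non-adjacent in $G$, so $\{a_1b_1,\ldots,a_pb_p\}$ is already the desired matching in $\overline{G_x}$. No appeal to Hall's theorem is needed. Your argument unpacks the same colouring idea inside a verification of Hall's condition, which is perfectly valid but introduces machinery that the explicit pairing avoids. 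The gain of your approach is that it makes clear the argument would extend to situations where one cannot name a single explicit matching; the gain of the paper's approach is brevity and directness for the case at hand.
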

\begin{proof}
  Suppose $G[A(x,y)]$ is a complete graph for some edge $xy \in E(G)$,
  and let $G-x-y$ be coloured properly in the colours $1, 2, \ldots,
  k-3$, and $k-2$. The colours applied to $A(x,y)$ are all distinct,
  and so we may assume $A(x,y) = \{ a_1, \ldots, a_p \}$ where vertex
  $a_i$ is coloured $i$ for each $a_i \in A(x,y)$. According to
  Proposition~\ref{prop:ElemPropertiesOfDC}~(iii), each
  of the colours $1, 2, \ldots, k-3$, and $k-2$ appear at least once
  on a vertex of $B(x,y)$, say \mbox{$B(x,y) = \{ b_1, \ldots, b_q \}$}
  with vertex $b_i$ being coloured $i$ for each \mbox{$i \leq
    k-2$}. Also, $q \geq k-2$. Since $G[A(x,y) \cup \{ x \} ]$ is a
  complete graph, it follows from
  Proposition~\ref{prop:ElemPropertiesOfDC}~(i) that $p =
  |A(x,y)| \leq k-3$. Hence $p < q$, and $a_i$ and $b_i$ have the same
  colour for each $i \in [p]$, in particular, $\{ a_1 b_1, a_2 b_2,
  \ldots, a_p b_p \}$ is a matching of the vertices of $A(x,y)$ to
  vertices of $B(x,y)$ in $\overline{G_x}$.
\end{proof}
\begin{proposition}[\cite{KawarabayashiPedersenToftEJC2010}]
  If $A(x,y)$ is non-empty for some edge $xy \in E(G)$, then $\delta
  (G[A(x,y)]) \geq 1$, that is, the induced subgraph $G[A(x,y)]$ contains
  no isolated vertices. By symmetry, $\delta (G[C(x,y)]) \geq 1$, if
  $C(x,y)$ is non-empty.
\label{prop:noIsolatedInAxy}
\end{proposition}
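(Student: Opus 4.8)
The plan is to argue by contradiction. Suppose $A(x,y) \neq \emptyset$ for some edge $xy \in E(G)$ and that some vertex $a \in A(x,y)$ is isolated in the induced subgraph $G[A(x,y)]$; I will show this forces $G - x - a$ to admit no $(k-2)$-colouring, contradicting double-criticality. Note that $xa \in E(G)$, since $a \in A(x,y) \subseteq N(x)$, so double-criticality does apply to the edge $xa$.

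The first step is to extract the single structural fact I need from the isolation hypothesis, namely that $B(x,a) := N(x) \cap N(a) \subseteq N(y)$. To see this, take any $v \in N(x) \cap N(a)$. Since $v$ is a neighbour of $a$ and $a$ has no neighbour in $A(x,y)$, we get $v \notin A(x,y) = N(x) \setminus N[y]$; combined with $v \in N(x)$ this forces $v \in N[y]$. Moreover $v \neq y$, because $v \sim a$ while $a \not\sim y$ (recall $a \in A(x,y)$ means $a \notin N[y]$). Hence $v \in N(y)$, as claimed. In particular, since also $a \notin N(y)$ and $x \notin N(x)$, every vertex of $B(x,a)$ lies in $V(G - x - a)$ and is a neighbour of $y$ in $G - x - a$.

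The second step invokes double-criticality: pick any $(k-2)$-colouring $\varphi$ of $G - x - a$. By Proposition~\ref{prop:ElemPropertiesOfDC}(iii) applied to the edge $xa$, the set $B(x,a)$ meets every one of the $k-2$ colour classes of $\varphi$. But by the first step all vertices of $B(x,a)$ are neighbours of $y$ in $G - x - a$, so $y$ is adjacent to vertices of all $k-2$ colours, which is impossible for a proper $(k-2)$-colouring. This contradiction proves that $G[A(x,y)]$ has no isolated vertex. Exchanging the roles of $x$ and $y$ (which interchanges $A(x,y)$ and $C(x,y)$) yields the statement for $C(x,y)$.

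I expect the only real subtlety to lie in choosing the right edge to apply double-criticality to. The instinctive approach -- take a $(k-2)$-colouring of $G - x - y$ and try to build a $(k-1)$-colouring of $G$ -- stalls, because Proposition~\ref{prop:ElemPropertiesOfDC}(iii) forces $B(x,y)$ to use all $k-2$ colours, so there is no spare colour near $x$ to exploit. The point of the argument above is that the isolation of $a$ is precisely the hypothesis that makes $B(x,a) \subseteq N(y)$, and then Proposition~\ref{prop:ElemPropertiesOfDC}(iii) is used not to extend a colouring of $G - x - a$ but to rule out its existence altogether.
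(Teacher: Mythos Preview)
Your argument is correct. Note, however, that this paper does not supply its own proof of the proposition: it is quoted verbatim from~\cite{KawarabayashiPedersenToftEJC2010}, so there is nothing here to compare your approach against. For what it is worth, your argument --- apply Proposition~\ref{prop:ElemPropertiesOfDC}(iii) to the edge $xa$ rather than $xy$, observe that isolation of $a$ in $G[A(x,y)]$ forces $B(x,a) \subseteq N(y)$, and then note that $y$ cannot be properly coloured in any $(k-2)$-colouring of $G - x - a$ --- is clean and essentially the natural one; it matches the spirit of the proof in the cited source.
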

Thus, by Proposition~\ref{prop:noIsolatedInAxy}, if $y$ is a vertex which
has degree $2$ in $\overline{G_x}$ then the two neighbours of $y$ in
$\overline{G_x}$ must be non-adjacent in $\overline{G_x}$.
\begin{proposition}[\cite{KawarabayashiPedersenToftEJC2010}]
\label{prop:kMinusTwoColourableNeighbourGraphGx}
\label{prop:structureOfGx}
\label{prop:doubleCriticalImpliesSixConnected}
 ~ \begin{itemize} 
\item[\emph{(i)}] For any vertex $x$ of $G$ not joined to all other vertices of $G$, $\chi(G_x) \leq k-3$;
\item[\emph{(ii)}] if $x$ is a vertex of degree $k+1$ in $G$, then the complement
  $\overline{G_x}$ consists of isolated vertices (possibly none) and
  cycles (at least one), where the length of each cycle is at least
  five, and 
\item[\emph{(iii)}] $G$ is $6$-connected.
\end{itemize}
\end{proposition}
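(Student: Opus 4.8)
The plan is to begin by extracting from double-criticality two standard consequences used throughout: $G$ is $k$-vertex-critical and $k$-edge-critical. Indeed, for a vertex $v$ with a neighbour $u$, re-inserting $u$ into a $(k-2)$-colouring of $G-u-v$ gives $\chi(G-v)\le k-1$; and for an edge $uv$, colouring both $u$ and $v$ with one new colour extends a $(k-2)$-colouring of $G-u-v$ to $G-uv$, so $\chi(G-uv)\le k-1$. I would also record that $n(G)\ge k+3$, since otherwise $\Delta(G)\le n(G)-1\le k+1=\delta(G)$ by Proposition~\ref{prop:ElemPropertiesOfDC}(ii), forcing $G=K_{k+2}$. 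For part (i): as $x$ is joined to every vertex of $N(x)$, $\chi(G[N[x]])=\chi(G_x)+1$, so it suffices to show $\chi(G[N[x]])\le k-2$. First, $\chi(G_x)\le k-2$, for if $\chi(G_x)\ge k-1$ then $\chi(G[N[x]])\ge k$, and choosing a non-neighbour $z$ of $x$ gives $G[N[x]]\subseteq G-z$ and hence $\chi(G-z)=k$, contradicting vertex-criticality. Now let $Z:=V(G)\setminus N[x]\ne\varnothing$. If $G$ has an edge $zz'$ with both ends in $Z$, then $G[N[x]]\subseteq G-z-z'$ and double-criticality gives $\chi(G[N[x]])\le k-2$, as required. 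If $G$ has no such edge, then $Z$, and hence $\{x\}\cup Z$, is independent, and colouring $\{x\}\cup Z$ with one colour and $N(x)$ with a further $\chi(G_x)\le k-2$ colours shows $\chi(G)\le k-1$, a contradiction; so this case is vacuous and (i) follows.

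For part (ii), let $x$ be a $(k+1)$-vertex; since $n(G)\ge k+3$ it has a non-neighbour, and $G_x$ has exactly $k+1$ vertices. For $y\in N(x)$ one has $|A(x,y)|=\deg_{\overline{G_x}}(y)$ and hence $|B(x,y)|=k-\deg_{\overline{G_x}}(y)$; as $|B(x,y)|\ge k-2$ by Proposition~\ref{prop:ElemPropertiesOfDC}(iii), we get $\Delta(\overline{G_x})\le 2$. A vertex of $\overline{G_x}$-degree $1$ would make $G[A(x,y)]$ a single isolated vertex, contradicting Proposition~\ref{prop:noIsolatedInAxy}, so every vertex of $\overline{G_x}$ has degree $0$ or $2$; thus $\overline{G_x}$ is a disjoint union of isolated vertices and cycles, with at least one cycle since $G$ has no $K_{k-1}$, so $G_x\ne K_{k+1}$. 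It remains to exclude short cycles. In a triangle $y_1y_2y_3$ of $\overline{G_x}$ we have $A(x,y_1)=\{y_2,y_3\}$, and Proposition~\ref{prop:noIsolatedInAxy} forces $y_2\sim y_3$ in $G$, contradicting $y_2y_3\in E(\overline{G_x})$. In a $4$-cycle $y_1y_2y_3y_4$ we have $A(x,y_1)=\{y_2,y_4\}$ with $y_2\sim y_4$ in $G$, so $G[A(x,y_1)]$ is complete, and Proposition~\ref{prop:antiMatchingBetweenAandB} produces distinct $b_2,b_4\in B(x,y_1)$ with $b_2$ non-adjacent to $y_2$ and $b_4$ non-adjacent to $y_4$ in $G$; since at most one of $b_2,b_4$ equals $y_3$, one of $y_2,y_4$ then has a third neighbour in $\overline{G_x}$, against $\Delta(\overline{G_x})\le 2$. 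Hence every cycle of $\overline{G_x}$ has length at least five.

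For part (iii), $G$ is $2$-connected because it is $k$-critical, so I would suppose for contradiction that $G$ has a cut $S$ with $|S|\le 5$, fix a component $V_1$ of $G-S$, and set $V_2:=V(G)\setminus(S\cup V_1)$. Since $\delta(G)\ge k+1>|S|$, every vertex of $V_1\cup V_2$ has at least $k-4\ge 2$ neighbours outside $S$, so $G[V_1]$ and $G[V_2]$ each contain an edge. The key step — the point at which double-criticality, rather than mere $k$-criticality, buys the extra colour — is that deleting the two ends of an edge of $G[V_2]$ and restricting the resulting $(k-2)$-colouring of $G$ to $G[V_1\cup S]$ gives $\chi(G[V_1\cup S])\le k-2$, and symmetrically $\chi(G[V_2\cup S])\le k-2$. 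It then remains to combine these into a proper $(k-1)$-colouring of $G$, contradicting $\chi(G)=k$; equivalently, to find colourings of the two sides that induce the same partition of $S$. For $|S|\le 2$ this is immediate: if the two sides were forced onto different partitions of $S=\{a,b\}$ then necessarily $a\not\sim b$, one side realises $a=b$ with $\le k-2$ colours, the other realises $a=b$ with $\le k-1$ colours (contracting $a$ and $b$ raises the chromatic number by at most one), and gluing along $a=b$ yields a $(k-1)$-colouring of $G$. For $3\le|S|\le 5$ I would carry out a similar but more delicate analysis of which partitions of $S$ are realisable on each side, using the smallness of $S$, the minimality of the cut, and $\delta(G)\ge k+1$. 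I expect this last reconciliation of the two side-colourings along the separator to be the main obstacle.
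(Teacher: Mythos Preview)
This proposition is quoted from \cite{KawarabayashiPedersenToftEJC2010} and is not proved in the present paper, so there is no in-paper argument to compare your approach against. Your proofs of (i) and (ii) are correct and use exactly the right tools: vertex-criticality together with an edge inside $V(G)\setminus N[x]$ for (i), and Propositions~\ref{prop:ElemPropertiesOfDC}(iii), \ref{prop:noIsolatedInAxy} and \ref{prop:antiMatchingBetweenAandB} to pin down the degree sequence of $\overline{G_x}$ and exclude $3$- and $4$-cycles for (ii). The $4$-cycle argument via the anti-matching of Proposition~\ref{prop:antiMatchingBetweenAandB} is clean.

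Part (iii), however, has a genuine gap, and you flag it yourself. The reduction is sound: a cutset $S$ with $|S|\le 5$ yields, by double-criticality applied to an edge on the far side, $(k-2)$-colourings of $G[V_1\cup S]$ and $G[V_2\cup S]$, and one then wants to glue these into a $(k-1)$-colouring of $G$. Your treatment of $|S|\le 2$ is fine, including the lemma that identifying two non-adjacent vertices raises $\chi$ by at most one. But for $3\le|S|\le 5$ you only promise ``a similar but more delicate analysis'' and call it ``the main obstacle''. This is not a formality: already at $k=6$ the two sides are merely $4$-colourable while $|S|$ may be $5$, so there is no slack, and a naive partition-matching argument on $S$ does not go through. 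One needs additional structural input --- for instance that every vertex of $S$ has neighbours in both $V_1$ and $V_2$ by minimality of the cut, that $\delta(G)\ge k+1$ forces many edges from $S$ into each side, or finer consequences of Proposition~\ref{prop:ElemPropertiesOfDC}(iii) applied to edges between $S$ and the $V_i$ --- to force compatible colourings or to reach a contradiction directly. Until those cases are actually carried out, (iii) remains unproved.
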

\begin{proposition}[\cite{KawarabayashiPedersenToftEJC2010}]\label{prop:atLeast15vertices} There is no non-complete double-critical $8$-chromatic graph of
  order less than $15$.
\end{proposition}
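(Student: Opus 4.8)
The plan is to argue by contradiction: suppose $G$ is a non-complete double-critical $8$-chromatic graph with $n(G)\le 14$, and rule out $n(G)=11,12,13,14$ in turn; the cases $n(G)\le 10$ are immediate, since $\delta(G)\ge 9$ by Proposition~\ref{prop:ElemPropertiesOfDC}(ii) forces $G=K_{10}$ when $n(G)=10$. The starting point is that $\overline{G}$ has maximum degree at most $n(G)-1-\delta(G)\le n(G)-10\le 4$, so $\overline{G}$ is very sparse, and for $n(G)\le 12$ it is a disjoint union of paths and cycles.

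The engine is a translation of the two critical properties into properties of $\overline{G}$, using that a proper $j$-colouring of a subgraph $H\subseteq G$ is exactly a partition of $V(H)$ into $j$ cliques of $\overline{G}$. First, a double-critical graph is vertex-critical --- for $v\in V(G)$ pick a neighbour $u$, so $\chi(G-v)\le\chi(G-u-v)+1\le(k-2)+1=k-1<k$ --- hence every vertex $w$ of $\overline{G}$ occurs as a singleton part of some partition of $V(G)$ into $\chi(G)=8$ cliques of $\overline{G}$. Component by component this forces every component of $\overline{G}$ to be such that deleting any one of its vertices lowers its clique-cover number (the least number of cliques covering all its vertices), and among graphs of maximum degree $\le 2$ the only such components are the isolated vertices and the odd cycles of length $\ge 5$ (in a $K_2$, a path on $\ge 3$ vertices, an even cycle, or a triangle, some vertex can be deleted without changing the clique-cover number). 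Secondly, double-criticality says $\chi(G-x-y)=6$ for every $xy\in E(G)$, i.e.\ $V(G)$ has a partition into $8$ cliques of $\overline{G}$ in which both $x$ and $y$ are singleton parts --- a far stronger demand. These two facts dispose of the small cases. If $n(G)=11$, the first fact makes $\overline{G}$ a union of isolated vertices and odd cycles, but its maximum degree is $\le 1$, so there are no cycles, whence $\overline{G}$ is edgeless and $G=K_{11}$, a contradiction. If $n(G)=12$, the first fact leaves precisely $\overline{G}\in\{C_9+3K_1,\ 2C_5+2K_1\}$, and the second fact fails in each case: choosing $x,y$ at distance $2$ on a $C_5$- or $C_9$-component produces an edge $xy$ of $G$ such that no minimum clique partition of $\overline{G}$ has both $x$ and $y$ as singletons, because the leftover path obtained by deleting $\{x,y\}$ from that component cannot be covered within the remaining clique budget.

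For $n(G)\in\{13,14\}$ one runs the same argument with two reinforcements. First, by Proposition~\ref{prop:structureOfGx}(i) every neighbourhood graph satisfies $\chi(G_v)\le 5$, and if $G$ has a $9$-vertex $v$ then Proposition~\ref{prop:structureOfGx}(ii) forces $\overline{G_v}\in\{C_9,\ C_8+K_1\}$; this pins down $G[N[v]]$, and together with $\delta(G)\ge 9$ and the $6$-connectivity of $G$ (Proposition~\ref{prop:doubleCriticalImpliesSixConnected}(iii)) it eliminates the surviving configurations. Secondly, and more systematically, one may invoke Gallai's theorem that a $k$-critical graph on at most $2k-2=14$ vertices is a join $G_1+G_2$; testing double-criticality on edges inside $G_1$ and inside $G_2$ shows that both factors are double-critical, so --- the Double-Critical Graph Conjecture being settled for $\chi\le 5$ --- every factor of chromatic number $\le 5$ is complete. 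Since $\chi(G_1)+\chi(G_2)=8$, either $G$ itself is complete (contradiction), or exactly one factor is a non-complete double-critical graph of chromatic number $6$ or $7$; but the analogous statements give such a graph at least $13$, resp.\ $14$, vertices, while the other factor contributes at least $8-\chi\ge 1$ further vertices, so $n(G)\ge 15$, a contradiction. In this way the whole statement reduces to the corresponding facts for $k=6$ and $k=7$, proved by the same scheme; the parts that do not reduce are exactly the cases where Gallai's theorem does not apply because $\overline{G}$ is connected, namely $k=6$ with $n(G)\in\{11,12\}$ and $k=7$ with $n(G)=13$.

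The step I expect to be the main obstacle is precisely this connected-complement situation at the boundary orders $n=2k-1$ and $n=2k$ (for $k=8$ it is absorbed into the case $n(G)=14$): there $\overline{G}$ is a connected sparse graph of maximum degree $\le n-k-2$ whose clique-cover number equals $k$, with the additional property that deleting any single vertex lowers the clique-cover number while deleting the two ends of any non-edge lowers it by exactly $2$; enumerating all such graphs and eliminating each one requires the combined force of all the structural propositions above.
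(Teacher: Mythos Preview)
The paper does not prove this proposition; it is cited from \cite{KawarabayashiPedersenToftEJC2010} without argument, so there is no in-paper proof to compare against. What follows is an assessment of your plan on its own merits.

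Your treatment of $n(G)\le 12$ is correct: vertex-criticality does force every component of $\overline{G}$ with $\Delta\le 2$ to be $K_1$ or an odd cycle of length $\ge 5$, the two surviving configurations at $n=12$ are exactly $C_9+3K_1$ and $2C_5+2K_1$, and choosing $x,y$ at distance $2$ on a cycle kills both via double-criticality. The Gallai reduction for $n\in\{13,14\}$ is also set up correctly, and you have correctly isolated the irreducible base cases $(k,n)\in\{(6,11),(6,12),(7,13)\}$ with connected complement. The gap is that you do not actually dispatch these three cases. You flag them as ``the main obstacle'' and describe what must happen---enumerate connected graphs $\overline{G}$ on $11$, $12$, or $13$ vertices with $\Delta(\overline{G})\le n-k-2\in\{3,4,4\}$ and clique-cover number $k$, then eliminate each using the structural propositions---but neither this enumeration nor the vague ``first reinforcement'' via Propositions~\ref{prop:structureOfGx} and~\ref{prop:doubleCriticalImpliesSixConnected} is carried out. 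Without them the Gallai reduction, fed only the bound $n\ge 12$ for $k=6$ from Proposition~\ref{prop:342alkfh39485}, yields merely $n(G)\ge 14$ for $k=8$: a hypothetical non-complete double-critical $6$-chromatic graph on $12$ vertices joined to $K_2$, or a $7$-chromatic one on $13$ vertices joined to $K_1$, would give $n=14$ and is not excluded. So what you have is a correct architecture for the proof, with the load-bearing finite case analysis still to be supplied.
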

\section{Minimum degree $9$ and $K_8$
  minors} \label{sec:delta9contractionsToK_8}
\begin{proposition} \label{prop:83475342}
  If $G$ is a double-critical $8$-chromatic graph with
  a vertex $x$ of degree $9$, then $G_x \simeq
  \overline{C_8} + K_1$ or $G_x \simeq \overline{C_9}$.
\end{proposition}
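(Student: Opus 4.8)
The plan is to combine the structural description of the complement of the neighbourhood graph of a minimum-degree vertex with the chromatic upper bound for neighbourhood graphs, and then to settle a handful of explicit cases. First I would observe that $G$ is non-complete: the only complete $8$-chromatic graph is $K_8$, every vertex of which has degree $7$, so the presence of a vertex $x$ of degree $9$ forces $G \neq K_8$; consequently $n(G) \geq 15$ by Proposition~\ref{prop:atLeast15vertices}, and all of the preliminary results of Section~\ref{sec:Prelim} apply to $G$ with $k = 8$. Since $\delta(G) \geq k+1 = 9$ by Proposition~\ref{prop:ElemPropertiesOfDC}~(ii), the vertex $x$ has degree exactly $9 = k+1$; and since $\deg(x) = 9 < 14 \leq n(G) - 1$, the vertex $x$ is not joined to all other vertices of $G$.

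Next I would apply Proposition~\ref{prop:structureOfGx}~(ii) to $x$: because $x$ has degree $k+1$, its neighbourhood complement $\overline{G_x}$ — a graph on exactly $9$ vertices — is the disjoint union of some number $m \geq 0$ of isolated vertices together with at least one cycle, every cycle of length at least $5$. Two vertex-disjoint cycles of length $\geq 5$ would already consume at least $10 > 9$ vertices, so $\overline{G_x}$ has exactly one cycle, of length $9 - m$ where $m \in \{0,1,2,3,4\}$; that is, $\overline{G_x} \simeq C_{9-m} \sqcup m K_1$. Passing to complements replaces the disjoint union by a complete join and the $m$ isolated vertices by $K_m$, so $G_x \simeq \overline{C_{9-m}} + K_m$.

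Finally I would invoke Proposition~\ref{prop:kMinusTwoColourableNeighbourGraphGx}~(i): as $x$ is not joined to every other vertex of $G$, $\chi(G_x) \leq k - 3 = 5$. Since chromatic numbers add across a complete join, and $\chi(\overline{C_n}) = \lceil n/2 \rceil$ for all $n \geq 4$ — indeed $\alpha(\overline{C_n}) = \omega(C_n) = 2$ forces at least $\lceil n/2 \rceil$ colour classes, while pairing up consecutive vertices of the cycle attains this bound — we obtain $\chi(G_x) = \lceil (9-m)/2 \rceil + m$, which equals $5$ for $m \in \{0,1\}$ and equals $6$, $6$, $7$ for $m = 2, 3, 4$ respectively. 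Hence $m \in \{0,1\}$, so $\overline{G_x} \simeq C_9$ or $\overline{G_x} \simeq C_8 \sqcup K_1$, that is, $G_x \simeq \overline{C_9}$ or $G_x \simeq \overline{C_8} + K_1$. There is no substantial obstacle here — the two cited propositions carry the weight — and the only points deserving a moment's care are that nine vertices cannot host two disjoint cycles of length at least five, and the elementary evaluation $\chi(\overline{C_n}) = \lceil n/2 \rceil$.
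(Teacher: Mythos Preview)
Your proof is correct and follows the same overall skeleton as the paper's --- apply Proposition~\ref{prop:structureOfGx}~(ii) to see that $\overline{G_x}$ is a single cycle $C_{9-m}$ plus $m$ isolated vertices, then rule out $m \geq 2$ --- but the elimination step is organised differently. The paper dispatches $j \in \{5,6\}$ by observing that $G[N[x]] \simeq \overline{C_j} + K_{10-j}$ then contains a $K_7$ subgraph (violating Proposition~\ref{prop:ElemPropertiesOfDC}~(i)), and handles $j=7$ separately: there the $K_7$-subgraph argument fails, so the paper splits on whether $x$ is universal, invoking $\chi(G_x) \leq 5$ when it is not and showing $G \simeq \overline{C_7} + K_3$ is $7$-colourable when it is. You instead settle the universality question once at the outset via $n(G) \geq 15$ (Proposition~\ref{prop:atLeast15vertices}), which makes Proposition~\ref{prop:kMinusTwoColourableNeighbourGraphGx}~(i) available unconditionally, and then the single computation $\chi(\overline{C_{9-m}} + K_m) = \lceil (9-m)/2 \rceil + m$ kills all three bad cases at once. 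Your route is tidier and avoids the ad hoc case split at $j=7$; the paper's route, on the other hand, does not need to cite Proposition~\ref{prop:atLeast15vertices}.
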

\begin{proof}
  Suppose $G$ is a double-critical $8$-chromatic graph with a vertex
  $x$ of degree $9$. Now, according to
  Proposition~\ref{prop:structureOfGx}~(ii), $\overline{G_x}$ consists of
  isolated vertices and cycles (at least one cycle) of length at least
  $5$. Since $G_x$ consists of only nine vertices, it follows that
  $\overline{G_x}$ consists of exactly one cycle, which we denote
  $C_j$, and some isolated vertices. If $j \in \{ 5, 6 \}$, then $G[
  N[x]]$ is easily seen to contain $K_7$ as a subgraph, contrary to
  Proposition~\ref{prop:ElemPropertiesOfDC}~(i). Suppose $j =
  7$. Moreover, suppose that the vertex $x$ is not adjacent to all
  other vertices of $G$. Then, according to
  Proposition~\ref{prop:kMinusTwoColourableNeighbourGraphGx}~(i),
  $\chi(G_x) \leq 5$. However, the graph $G_x$, which is isomorphic to
  $\overline{C_7} + K_2$, is easily seen not be $5$-colourable. Thus,
  the vertex $x$ is adjacent to all other vertices of $G$, and so $G$
  is isomorphic to $\overline{C_7} + K_3$. However, the graph
  $\overline{C_7} + K_3$ is easily seen to be $7$-colourable, a
  contradiction. Thus, we must have $j \geq 8$, and so the desired
  result follows immediately.
\end{proof}
The proof of Proposition~\ref{prop:83475342} implies that any
double-critical $8$-chromatic graph with a vertex of degree $9$
contains $K_6^-$ as a subgraph.
\begin{corollary}\label{cor:minimumDegreeNine}
  Every double-critical $8$-chromatic graph with minimum
  degree $9$ contains a $K_8$ minor.
\end{corollary}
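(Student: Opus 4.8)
The plan is to fix a vertex $x$ of degree $9$ (one exists since $\delta(G) = 9$) and apply Proposition~\ref{prop:83475342}, which leaves only the possibilities $G_x \simeq \overline{C_8} + K_1$ and $G_x \simeq \overline{C_9}$. The elementary fact underpinning both cases is that $\overline{C_8}$ and $\overline{C_9}$ each contain a $K_6$ minor: writing $v_0, v_1, \ldots$ for the vertices in cyclic order along the defining cycle, the branch sets $\{v_0\}$, $\{v_2\}$, $\{v_4\}$, $\{v_6\}$, $\{v_1, v_5\}$, $\{v_3, v_7\}$ work for $\overline{C_m}$ when $m \in \{8,9\}$ — each two-element set is connected because its two vertices are non-consecutive on the cycle, and any two branch sets are joined because being consecutive on the cycle is the only obstruction to adjacency in the complement. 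For $m = 9$ this uses only $v_0, \ldots, v_7$, so $v_8$ is left outside every branch set. I would record separately the routine check that these six sets are pairwise adjacent; it is a one-line computation modulo $8$ or $9$.

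If $G_x \simeq \overline{C_8} + K_1$, then $G[N[x]]$ is precisely $\overline{C_8} + K_2$ (the two extra vertices being $x$ and the apex of $G_x$, which are adjacent), and adjoining these two universal vertices to the $K_6$ minor of $\overline{C_8}$ already yields a $K_8$ minor inside $N[x]$. The case $G_x \simeq \overline{C_9}$ is where the real difficulty lies: here $G[N[x]] = \overline{C_9} + K_1$, and $\overline{C_9}$ has \emph{no} $K_7$ minor (a $K_7$ minor can have at most two branch sets of size exceeding one, hence at least five singleton branch sets, i.e.\ an independent set of size $5$ in $C_9$, which is impossible), so $G[N[x]]$ yields only a $K_7$ minor and one is forced to look outside $N[x]$.

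So suppose $G_x \simeq \overline{C_9}$ and put $W := V(G) \setminus N[x]$, which is non-empty by Proposition~\ref{prop:atLeast15vertices}. The key is $6$-connectivity (Proposition~\ref{prop:doubleCriticalImpliesSixConnected}~(iii)): if $C$ is any component of $G[W]$, then $N_G(C) \subseteq N(x)$ — because $x$ has no neighbour in $W$ and $C$ is a full component of $G[W]$ — and $N_G(C)$ separates $C$ from $x$, so $|N_G(C)| \geq 6$. Hence $C$ has neighbours among at least six of the nine vertices $v_0, \ldots, v_8$ forming the cycle $\overline{G_x} = C_9$, and since $C_9$ has no independent set of size $6$, the component $C$ is adjacent to two consecutive vertices of that cycle; after relabelling, say $C$ is adjacent to both $v_8$ and $v_0$. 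Now take as branch sets $\{x\}$, the six branch sets of the $K_6$ minor of $\overline{C_9}$ on $\{v_0, \ldots, v_7\}$ above, and $\{v_8\} \cup W'$, where $W'$ is the vertex set of a path in $C$ from a neighbour of $v_8$ to a neighbour of $v_0$. Then $\{v_8\} \cup W'$ is connected; it meets $N(x)$ and so is joined to $\{x\}$; it is joined to each of $\{v_2\}$, $\{v_4\}$, $\{v_6\}$, $\{v_1,v_5\}$, $\{v_3,v_7\}$ through $v_8$ (among $v_0,\ldots,v_7$ the vertex $v_8$ fails to be adjacent in $\overline{C_9}$ only to $v_0$ and $v_7$, and $v_7$ lies in a two-element set together with the neighbour $v_3$ of $v_8$); and it is joined to the remaining branch set $\{v_0\}$ through the far endpoint of $W'$. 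Since $\{x\}$ is joined to all seven other branch sets, this is a $K_8$ minor, completing the proof.

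The main obstacle is the $\overline{C_9}$ case, and within it the need to combine two facts: $\overline{C_9}$ carries a $K_6$ but not a $K_7$ minor, and $6$-connectivity forces some component of $G[W]$ to see two \emph{consecutive} vertices of the cycle $\overline{G_x}$ — this consecutivity is exactly what lets a single external path supply the missing seventh branch set. Everything else, namely the $\overline{C_8}$ case and all the adjacency verifications, is routine $\mathbb{Z}_9$ (respectively $\mathbb{Z}_8$) bookkeeping.
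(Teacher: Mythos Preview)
Your proof is correct, and in fact it improves on the paper's argument in the first case. The paper handles both cases by invoking Menger's Theorem: for $G_x \simeq \overline{C_8} + K_1$ it deletes the apex $u$, uses $5$-connectivity of $G - u$ to get five internally disjoint $(x,z)$-paths through $V(C_8)$, pigeonholes to find two paths through consecutive $v_i, v_{i+1}$, contracts the resulting $(v_i,v_{i+1})$-walk to an edge, and then contracts $\overline{C_8^-} + K_2$ to $K_8$; it then remarks that the $\overline{C_9}$ case is ``similar''. Your observation that $\overline{C_8}$ already carries a $K_6$ minor (so that $G[N[x]] \simeq \overline{C_8} + K_2$ contains $K_8$ outright, with no recourse to vertices outside $N[x]$) short-circuits that entire first case.

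For $G_x \simeq \overline{C_9}$ the two arguments are closer in spirit---both exploit $6$-connectivity to force some connected piece outside $N[x]$ to attach to two \emph{consecutive} vertices of the cycle $C_9 = \overline{G_x}$---but the mechanics differ. The paper would run Menger from $x$ to a fixed $z \notin N[x]$ to obtain six internally disjoint paths, note that the six entry points into $N(x)$ cannot be independent in $C_9$, contract one $(v_i,v_{i+1})$-walk to an edge, and then exhibit a $K_7$ minor in the resulting $\overline{P_9}$. You instead look at a component $C$ of $G - N[x]$, observe that $N_G(C) \subseteq N(x)$ is a cutset and hence has size at least $6$, and use a single path inside $C$ to build the eighth branch set $\{v_8\} \cup W'$ directly. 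Your version avoids Menger altogether and keeps the $K_6$ minor in $\overline{C_9}$ fixed throughout, which makes the adjacency bookkeeping slightly cleaner; the paper's version has the advantage of treating both cases uniformly with one template argument.
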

\begin{proof}
  Suppose $G$ is a double-critical $8$-chromatic graph with minimum
  degree $9$, and let $x$ denote a vertex of $G$ of degree $9$.
  Suppose that $G$ does not contain a $K_8$ minor. Then, according to
  Proposition~\ref{prop:atLeast15vertices}, there are at least $15$
  vertices in $G$, in particular, there is a vertex, which we shall
  call $z$, in $G - N[x]$. According to
  Proposition~\ref{prop:83475342}, there are two cases to consider:
  either $G_x \simeq \overline{C_8} + K_1$ or $G_x \simeq
  \overline{C_9}$.

  Suppose $G_x \simeq \overline{C_8} + K_1$, where $C_8 : v_0,v_1,v_2,
  \ldots, v_7$ and \mbox{$V(K_1) = \{ u \}$}. By
  Proposition~\ref{prop:doubleCriticalImpliesSixConnected}~(iii), $G$
  is $6$-connected, and so $G - u$ must be \mbox{$5$-connected}. Now,
  according to Menger's Theorem (see, for instance, \citep[Theorem
  9.1]{BondyAndMurty2008}), there is a collection $\mathcal{C}$ of
  five internally vertex-disjoint $(x,z)$-paths in $G-u$. Obviously,
  each path $P \in \mathcal{C}$ contains a vertex from $V(C_8)$, and
  we may assume that each of the paths $P \in \mathcal{C}$ contains
  exactly one vertex from $V(C_8)$. The fact that there are eight
  vertices in $V(C_8)$ and five vertex-disjoint $(x,z)$-paths in
  $\mathcal{C}$ going through $V(C_8)$ implies the existence of a pair
  of vertices $v_i$ and $v_{i+1}$ (modulo $8$) such that there is a
  $(v_i,z)$-path $Q_i$ and a $(v_{i+1},z)$-path $Q_{i+1}$ in $G-u$
  such that $Q_i$ and $Q_{i+1}$ are internally vertex-disjoint. We may
  assume $i=0$. Now, the $(v_0, v_1)$-path $Q_0 \cup Q_1$ in $G$ is
  contracted to an edge between $v_0$ and $v_1$. The resulting graph
  contains the graph $H \simeq \overline{C_8^-} + K_2$ as a subgraph,
  and $H$ can be contracted to $K_8$ by contracting the edges $v_2v_5$
  and $v_4v_7$. Thus, $G \geq K_8$. A similar argument shows that, if
  $G_x \simeq \overline{C_9}$, then $G \geq K_8$.
\end{proof}
\section{Minimum degree $10$ and $K_8$
  minors}\label{sec:delta10contractionsToK_8}
\begin{observation}\label{obs:432785932465} If $G$ is a double-critical $8$-chromatic graph with minimum
  degree $10$ and $\deg(x, G) = 10$, then $\Delta(\overline{G_x}) \leq
  3$.
\end{observation}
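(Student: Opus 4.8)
The plan is to reinterpret $\Delta(\overline{G_x})$ in terms of the sets $B(x,y)$ and then quote Proposition~\ref{prop:ElemPropertiesOfDC}~(iii) directly. Note first that since $G$ has a vertex of degree $10$ while an $8$-chromatic complete graph is $K_8$ (all of whose vertices have degree $7$), the graph $G$ is non-complete, so the results of Section~\ref{sec:Prelim}, in particular Proposition~\ref{prop:ElemPropertiesOfDC}, apply with $k=8$.

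Fix an arbitrary vertex $y \in N(x)$; then $xy \in E(G)$, and because $\deg(x,G) = 10$ the vertex set $N(x)$ of the neighbourhood graph $G_x$ has exactly $10$ elements. Inside $G_x$ the neighbours of $y$ are precisely the vertices of $B(x,y) = N(x) \cap N(y)$, so the neighbours of $y$ in the complement $\overline{G_x}$ are exactly the vertices of $N(x) \setminus N[y] = A(x,y)$; that is, $\deg(y, \overline{G_x}) = |A(x,y)|$. Since $A(x,y)$, $B(x,y)$ and $\{y\}$ partition $N(x)$, we have $|A(x,y)| = 10 - 1 - |B(x,y)| = 9 - |B(x,y)|$.

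Now I would apply Proposition~\ref{prop:ElemPropertiesOfDC}~(iii): for the edge $xy$ and any $6$-colouring of $G - x - y$, the common neighbourhood $B(x,y)$ contains a vertex of each colour class, whence $|B(x,y)| \geq k - 2 = 6$. Combining, $\deg(y, \overline{G_x}) = |A(x,y)| = 9 - |B(x,y)| \leq 3$. Since $y \in N(x)$ was arbitrary, $\Delta(\overline{G_x}) \leq 3$, as claimed.

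I do not expect a genuine obstacle here: the observation is an immediate arithmetic consequence of the lower bound $|B(x,y)| \geq k - 2$ together with the fact that $N(x)$ has only $10$ vertices. The single point requiring care is the bookkeeping identity $\deg(y, \overline{G_x}) = |A(x,y)|$, which follows directly from the definitions of $A(x,y)$ and $B(x,y)$ and from $x$ being adjacent to $y$.
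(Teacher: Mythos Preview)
Your proof is correct and follows essentially the same approach as the paper: both use the identity $|A(x,y)| + |B(x,y)| + 1 = \deg(x,G) = 10$ together with the bound $|B(x,y)| \geq 6$ from Proposition~\ref{prop:ElemPropertiesOfDC}~(iii). The only cosmetic difference is that the paper phrases the argument by contradiction (assuming some $y$ has $\deg(y,\overline{G_x}) \geq 4$ and deriving $\deg(x,G) \geq 11$), whereas you argue directly; your added remark that $G$ is non-complete is a nice point of care.
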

\begin{proof}
  Suppose $\Delta(\overline{G_x}) \geq 4$, and let $y$ denote a vertex
  which has degree $\geq 4$ in $\overline{G_x}$. Then $|A(x,y)| \geq
  4$ and, according to
  Proposition~\ref{prop:ElemPropertiesOfDC}~(iii), $|B(x,y)|
  \geq 6$. Thus, $\deg(x, G) \geq |A(x,y)| + |B(x,y)| + 1 \geq 11$,
  which contradicts the assumption $\deg(x, G)=10$.
\end{proof}
\begin{proposition}\label{prop:43287530893245}
  Suppose $G$ is a double-critical $8$-chromatic graph with minimum
  degree $10$, and suppose $G$ contains a vertex $x$ of degree $10$
  such that $\Delta(\overline{G_x}) \leq 2$. Then $G$ contains a $K_8$
  minor.
\end{proposition}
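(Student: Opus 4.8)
Let $x$ be a vertex of degree $10$ with $\Delta(\overline{G_x}) \leq 2$. Since $G$ is double-critical and $x$ is not adjacent to all other vertices (by Proposition~\ref{prop:atLeast15vertices} there are at least $15$ vertices), Proposition~\ref{prop:structureOfGx}~(i) gives $\chi(G_x) \leq 5$. Now $\overline{G_x}$ has maximum degree at most $2$, so it is a disjoint union of paths, cycles, and isolated vertices on $10$ vertices. The plan is first to pin down exactly which such complements can occur: if $\overline{G_x}$ contained a triangle or a $C_4$, then $G_x$ would contain a $K_7$ (the triangle case) or the chromatic/clique constraints would be violated, contradicting Proposition~\ref{prop:ElemPropertiesOfDC}~(i); short cycles are further excluded exactly as in the proof of Proposition~\ref{prop:83475342}. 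More importantly, one checks that $G_x$ must contain $K_6$ as a subgraph unless $\overline{G_x}$ has enough edges — with only $10$ vertices and $\Delta(\overline{G_x})\le 2$, the graph $\overline{G_x}$ has at most $10$ edges, and Proposition~\ref{prop:ElemPropertiesOfDC}~(i) forbids $K_7\subseteq G$, so $\overline{G_x}$ must meet every $7$-subset; this forces $\overline{G_x}$ to be (close to) a disjoint union of cycles covering all $10$ vertices, e.g. $\overline{C_{10}}$, $\overline{C_5 \cup C_5}$, or similar, possibly with a bounded number of isolated vertices.

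**Finding a large clique minor inside $N[x]$.** Once the structure of $G_x$ is known, the core of the argument is to exhibit, using $x$ together with contractions \emph{inside} $G[N[x]]$, a $K_8$ minor — or to reduce to a short list of configurations where $N[x]$ alone is not enough. The model is the proof of Corollary~\ref{cor:minimumDegreeNine}: there $G_x \simeq \overline{C_8}+K_1$ was converted to $K_8$ by contracting two suitably chosen non-edges of the complement (i.e. edges of $G_x$) plus one path through the rest of $G$. Here, with $10$ vertices available in $N(x)$, I would look for three or four independent edges of $G_x$ whose contraction turns $G_x$ into (a graph containing) $K_7$; together with $x$ this yields $K_8$. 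For $\overline{G_x} = \overline{C_{10}}$ one contracts a perfect-matching-like set of chords of $C_{10}$; for $\overline{C_5\cup C_5}$ one collapses each $\overline{C_5}$ (which is itself $C_5$, hence contractible within two steps to get down to the right size) — the precise choice of edges is a routine case check once the handful of possible $\overline{G_x}$ is listed.

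**Using $6$-connectivity and a vertex outside $N[x]$ for the leftover cases.** If the clique minor cannot be completed purely inside $G[N[x]]$ (because $\overline{G_x}$ has an isolated vertex $u$, or the matching of chords available is one edge short), I would invoke Proposition~\ref{prop:doubleCriticalImpliesSixConnected}~(iii): $G$ is $6$-connected, so $G-u$ is $5$-connected, and by Menger's Theorem there are five internally disjoint $(x,z)$-paths to a vertex $z \in G-N[x]$, each meeting $N(x)$ in one vertex. Since $|N(x)\setminus\{u\}| \ge 9 > 5$, a pigeonhole/cyclic-gap argument (exactly as in Corollary~\ref{cor:minimumDegreeNine}) produces two vertices $v_i, v_{i+1}$ that are consecutive on a cycle of $\overline{G_x}$ — hence adjacent in $G_x$ is \emph{false}, but the two paths through them can be merged through $z$ and contracted to a new edge $v_iv_{i+1}$, supplying the one missing edge. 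Contracting this together with the chords found above yields $K_8$.

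**Main obstacle.** The delicate point is the case analysis on the isomorphism type of $\overline{G_x}$: I must be sure the chromatic bound $\chi(G_x)\le 5$ and the clique bound $K_7 \not\subseteq G$ together leave only a short, explicitly enumerable list of complements, and that for \emph{each} one either three independent $G_x$-edges contract it past $K_7$, or the Menger-path trick supplies the deficiency. Handling $\overline{G_x}$ with an isolated vertex (so $G_x$ has a dominating vertex, analogous to $\overline{C_8}+K_1$) and making sure the contracted path from $z$ does not accidentally reuse a branch vertex is where the argument needs the most care; everything else is bookkeeping.
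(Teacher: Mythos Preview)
Your outline has the right endgame---show $G_x \geq K_7$ and add $x$ as the eighth branch set---but you miss the structural lemma that makes the whole thing a clean finite check, and you import machinery the proof does not need.

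\textbf{The missing simplification.} You write that $\overline{G_x}$ is a disjoint union of paths, cycles and isolated vertices, and then try to squeeze out paths using the $K_7$-free condition and $\chi(G_x)\le 5$. Neither of those does the job. The paper instead uses Proposition~\ref{prop:noIsolatedInAxy}: if some $y\in N(x)$ had degree~$1$ in $\overline{G_x}$, then $A(x,y)$ would be a single vertex, isolated in $G[A(x,y)]$, contradicting $\delta(G[A(x,y)])\ge 1$. Hence every vertex of $\overline{G_x}$ has degree $0$ or $2$, so $\overline{G_x}$ is a disjoint union of cycles and isolated vertices---no paths. (Your side remark that a triangle in $\overline{G_x}$ forces $K_7\subseteq G_x$ is also not right; what actually rules out a $C_3$ component is the same Proposition~\ref{prop:noIsolatedInAxy}, via the observation that a degree-$2$ vertex of $\overline{G_x}$ must have non-adjacent neighbours there.) Once this is in place there are only about fifteen isomorphism types for $\overline{G_x}$, indexed by the number of isolated vertices and the cycle-length partition of the rest, and in \emph{every} one of them $G_x$ already contains a $K_7$ minor.

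\textbf{The unnecessary Menger step.} Because $G_x\geq K_7$ holds in all cases, the paper never leaves $N[x]$: there is no ``leftover'' configuration requiring a vertex $z$ outside $N[x]$, five internally disjoint $(x,z)$-paths, or a contracted external edge. That apparatus is reserved for the genuinely harder situation $\Delta(\overline{G_x})=3$ (Lemma~\ref{lem:cubic} and Proposition~\ref{prop:importantProposition2}). Bringing it in here is not wrong, but it signals that you have not verified the decisive fact that the neighbourhood alone suffices when $\Delta(\overline{G_x})\le 2$. The chromatic bound $\chi(G_x)\le 5$ is likewise never used.
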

\begin{proof}
  If $\Delta(\overline{G_x}) = 0$, then $G_x \simeq K_{10}$, a
  contradiction. According to Proposition~\ref{prop:noIsolatedInAxy},
  no vertex of $\overline{G_x}$ has degree exactly $1$. Hence,
  $\Delta(\overline{G_x})=2$, and so the graph $\overline{G_x}$
  consists of cycles (at least one) and possibly some isolated
  vertices. If $\overline{G_x}$ has at least five isolated vertices,
  then it is easy to see that $G_x$ contains $K_7$ as a subgraph. If
  $\overline{G_x}$ has exactly four isolated vertices then either $G_x
  \simeq K_4 + 2 \overline{K_3}$ or $G_x \simeq K_4 +
  \overline{C_6}$. In the former case we obtain $G_x \geq K_7$ and in
  the latter case $G_x \supset K_7$. If $\overline{G_x}$ has exactly
  three isolated vertices, then either $G_x \simeq K_3 +
  \overline{C_3} + \overline{C_4}$ or $G_x \simeq K_3 +
  \overline{C_7}$. If $\overline{G_x}$ has exactly two isolated
  vertices, then $G_x$ is isomorphic to either $K_2 + \overline{K_3} +
  \overline{C_5}$, $K_2 + 2 \overline{C_4}$, or $K_2 +
  \overline{C_8}$. If $\overline{G_x}$ has exactly one isolated
  vertices, then $G_x$ is isomorphic to either $K_1 +
  3\overline{K_3}$, $K_1 + \overline{K_3} + \overline{C_6}$, $K_1 +
  \overline{C_4} + \overline{C_5}$, or $K_1 + \overline{C_9}$. If
    $\overline{G_x}$ has no isolated vertices, then $G_x$ is
    isomorphic to either $2 \overline{K_3} + \overline{C_4}$,
    $\overline{K_3} + \overline{C_7}$, $\overline{C_4} +
    \overline{C_6}$, $2 \overline{C_5}$, or $\overline{C_{10}}$. In
    each case it is easy to exhibit a $K_7$ minor in $G_x$, and so $G
    \geq K_8$.
\end{proof}
It may be true that if $G$ is a double-critical $8$-chromatic graph
with minimum degree $10$ and a vertex $x$ of degree $10$ such that
$G[N(x)]$ is $6$-regular then $G$ contains a $K_8$ minor. I was only
able to prove the desired result when $\overline{G[N(x)]}$ is not
isomorphic to any of the eight graphs $G_7, G_8, G_9, G_{12}, G_{13},
G_{16}, G_{17}$, and $G_{19}$ (see Appendix A). The graph denoted
$G_{17}$ is the Petersen graph. Given the symmetry of the Petersen
graph, it is particularly annoying not being able to settle the case
$\overline{G[N(x)]} \simeq G_{17}$.
\begin{problem}
  Prove that if $G$ is a double-critical $8$-chromatic graph with
  minimum degree $10$ and a vertex $x$ of degree $10$ such that
  $\overline{G_x}$ is the Petersen graph, then $G$ contains a $K_8$
  minor.
\end{problem}
\section{Minimum degree $10$ and $K_{8}^{-}$ minors}\label{sec:delta10contractionsToK_8^-}
In this section, we shall apply the following result of Mader.
\begin{theorem}[Mader~\cite{MR0229550}]\label{th:Mader} Every graph with minimum degree at least $5$ contains $K_ 6^-$ or
  the icosahedron graph as a minor. In particular, every graph with
  minimum degree at least $5$ and at most $11$ vertices contains a
  $K_6^-$ minor.
\end{theorem}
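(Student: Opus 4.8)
The plan is to prove the first, stronger assertion and deduce the ``in particular'' clause for free: the icosahedron has $12$ vertices, and a minor never has more vertices than its host graph, so no graph on at most $11$ vertices can have an icosahedron minor, whence on such graphs the main statement reduces to the existence of a $K_6^-$ minor. For the main statement I would argue by contradiction. Let $G$ be a counterexample with $n(G)$ as small as possible, so $\delta(G)\ge 5$ while $G$ has neither a $K_6^-$ minor nor an icosahedron minor; since each component of $G$ again has minimum degree at least $5$, we may take $G$ connected. If $\delta(G)\ge 6$, contract an arbitrary edge $uv$: the graph $G/uv$ is a minor of $G$, hence still has no $K_6^-$ or icosahedron minor, it has one fewer vertex, and its minimum degree is still at least $5$ --- the only vertices that can lose degree are the common neighbours of $u$ and $v$, which drop by exactly $1$ to at least $5$, while the identified vertex has degree at least $\max\{\deg u,\deg v\}-1\ge 5$. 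This contradicts minimality, so $\delta(G)=5$. The cases $n(G)\le 11$ are then disposed of by a direct finite verification: e.g.\ when $n(G)=7$ the complement of $G$ is a matching, and choosing one branch set to be a pair of endpoints of two distinct missing edges exhibits a $K_6^-$ (in fact $K_6$) minor; the larger values are only more generous. Hence $n(G)\ge 12$.

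Next I would reduce to the case that $G$ is $4$-connected. Since $K_6^-$ and the icosahedron are both $4$-connected, a $K_6^-$ or icosahedron minor cannot be ``trapped'' on one side of a separation of order at most $3$: given such a separation $(A,B)$ with $S=A\cap B$, one augments each side by a small gadget representing the other side (typically making $S$ a clique and, when needed, adding one vertex adjacent to all of $S$), obtaining graphs on fewer vertices to which minimality applies, and then transports a $K_6^-$ or icosahedron minor from an augmented side back to $G$ by replacing the gadget edges with internally disjoint paths through the other side. The first genuine technical point here is to perform the augmentation so that the minimum degree of each side stays at least $5$ --- a cut vertex whose neighbours straddle $S$ can otherwise fall below degree $5$ --- which forces one to choose the gadget, or to treat the small side directly, with some care. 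After this step $G$ is $4$-connected, $\delta(G)=5$, and $n(G)\ge 12$.

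The core of the argument is then local. Fix a vertex $v$ with $\deg v=5$. If $G[N(v)]$ has at least $9$ edges then $G[N[v]]\supseteq K_6^-$, a contradiction, so $\overline{G[N(v)]}$ has at least two edges; combining this with $4$-connectivity (which supplies, for each ``defect'' non-edge inside $N(v)$, enough disjoint connections through $G-N[v]$ to try to repair it) and with the absence of any degree-preserving contraction, I would whittle the possibilities down to $G[N(v)]\cong C_5$, so that $G[N[v]]$ is the pentagonal pyramid. One then propagates: each neighbour of $v$ is again a $5$-vertex whose link is a $C_5$ overlapping the previous links exactly as in the icosahedron, and iterating over all of $G$ identifies $G$ with the icosahedron, contradicting that $G$ is a counterexample. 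I expect the rigidity step --- showing that the only $4$-connected, (essentially) $5$-regular, $K_6^-$-minor-free graph is the icosahedron itself, equivalently a local classification of triangulations in which every vertex has degree $5$ --- to be the main obstacle and the real content of the theorem; the connectivity reduction is the secondary one, and everything else is bookkeeping.
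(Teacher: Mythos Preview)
The paper does not prove this theorem. It is quoted as a result of Mader with a citation, and the sentence immediately following it points to a textbook for a proof; the theorem is then used as a black box exactly once (in the proof of Proposition~\ref{prop:importantProposition}). So there is no ``paper's own proof'' to compare your proposal against.

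As for the proposal itself: the overall architecture---minimal counterexample, contraction to force $\delta(G)=5$, a connectivity reduction, local analysis at a $5$-vertex forcing $G[N(v)]\cong C_5$, and a rigidity step identifying $G$ with the icosahedron---is indeed the classical line of attack, and your deduction of the ``in particular'' clause from the main assertion is correct. But what you have written is an outline, not a proof, and the two places you flag as obstacles really are where essentially all the work lies. In the connectivity reduction, simply making the separator a clique (plus possibly one extra vertex) does \emph{not} automatically keep the minimum degree at least $5$ on each augmented side; one has to design the gadget with care or strengthen the induction hypothesis, and this is genuine case analysis in Mader's original argument. In the local step, knowing only that $\overline{G[N(v)]}$ has at least two edges is far from $G[N(v)]\cong C_5$: you must exclude $K_5$ minus a $P_3$, $K_5$ minus a matching of size $2$, and sparser configurations, and each exclusion uses both the $4$-connectivity (to route disjoint paths through $G-N[v]$ repairing a chosen non-edge) and minimality (to contract and produce a forbidden minor). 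Your sketch gestures at this but does not carry it out. In short: right strategy, but the substantive parts remain to be done, and the paper simply imports the finished theorem rather than reproving it.
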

A proof of Theorem~\ref{th:Mader} may also be found
in~\citep[p. 373]{MR506522}.
\begin{proposition}\label{prop:importantProposition}
  Suppose $G$ is a double-critical $8$-chromatic graph with minimum
  degree $10$. If $G$ contains a vertex $x$ of degree $10$ such that
  $G_x$ contains at least one vertex of degree $9$ in $G_x$, then $G$
  contains a $K_{8}^{-}$ minor.
\end{proposition}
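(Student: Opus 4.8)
The plan is to produce a $K_8^-$ minor entirely inside $G[N[x]]$, with Mader's theorem (Theorem~\ref{th:Mader}) doing the real work and the two ``dominating'' vertices $x$ and $y$ used only to boost the minor by two complete joins. First I would fix a vertex $y \in N(x)$ with $\deg(y, G_x) = 9$. Since $G_x = G[N(x)]$ has exactly $\deg(x,G) = 10$ vertices, this forces $y$ to be adjacent in $G$ to every vertex of $N(x) \setminus \{y\}$ and to $x$; equivalently, $y$ is an isolated vertex of $\overline{G_x}$ and is complete to $V(G_x) \setminus \{y\}$. Next I would invoke Observation~\ref{obs:432785932465} to get $\Delta(\overline{G_x}) \leq 3$, whence $\delta(G_x) \geq 10 - 1 - 3 = 6$.

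Then I would pass to $H := G_x - y$, a graph on $9$ vertices. Every vertex of $H$ was adjacent to $y$ in $G_x$, so deleting $y$ drops each degree by exactly one, and therefore $\delta(H) \geq 5$. As $H$ has at most $11$ vertices, Theorem~\ref{th:Mader} supplies a $K_6^-$ minor in $H$, say with branch sets $V_1, \ldots, V_6 \subseteq V(H)$.

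Finally I would lift this minor through $y$ and then through $x$. Since $y$ is complete to $V(H)$ in $G_x$, we have $G_x = H + K_1$, so the branch sets $V_1, \ldots, V_6$ together with the singleton branch set $\{y\}$ realize a $K_6^- + K_1 = K_7^-$ minor of $G_x$. Likewise $x$ is complete to $V(G_x)$ in $G[N[x]] = G_x + K_1$, so adjoining the singleton branch set $\{x\}$ upgrades this to a $K_7^- + K_1 = K_8^-$ minor of $G[N[x]]$, and hence of $G$. Thus $G \geq K_8^-$.

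I do not expect a serious obstacle here. The only place both hypotheses of the proposition are used is the bound $\delta(H) \geq 5$: the degree-$9$ vertex $y$ guarantees that deleting $y$ costs every other neighbour of $x$ exactly one unit of degree, while $\Delta(\overline{G_x}) \leq 3$ (Observation~\ref{obs:432785932465}) guarantees $G_x$ had minimum degree $6$ to start with. Everything afterwards is Mader's theorem plus two applications of the elementary fact that if $A$ has $B$ as a minor then $A + K_1$ has $B + K_1$ as a minor. The one thing worth double-checking carefully is that each vertex of $H$ really loses only the single neighbour $y$ (which it does, precisely because $y$ is adjacent to all of $V(G_x)\setminus\{y\}$), so that no vertex of $H$ drops below degree $5$.
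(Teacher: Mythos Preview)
Your proof is correct and matches the paper's argument essentially line for line: the paper also deletes the degree-$9$ vertex (called $v$ there), invokes Observation~\ref{obs:432785932465} to get $\delta(G_x - v) \geq 5$, applies Mader's Theorem~\ref{th:Mader} to the resulting $9$-vertex graph, and then adjoins $\{v\}$ and $\{x\}$ as the two extra branch sets.
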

\begin{proof}
  Suppose $G$ is a double-critical $8$-chromatic graph with minimum
  degree $10$ such that a vertex, say $v$, has degree $9$ in
  $G_x$. According to Observation~\ref{obs:432785932465}, $\Delta
  (\overline{G_x}) \leq 3$ and so $\delta (G_x) = n(G_x) - 1 - \Delta(
  \overline{G_x}) \geq 6$. Thus, the graph $G_x - v$ has minimum
  degree at least $5$ and exactly $9$ vertices, and so it follows from
  Theorem~\ref{th:Mader} that $G_x - v$ contains a $K_6^-$ minor. Such
  a $K_6^-$ minor of $G_x - v$ along with the additional branch sets
  $\{ x \}$ and $\{ v \}$ constitute a $K_8^-$ minor of $G$.
\end{proof}
\begin{lemma}
  Suppose $G$ is a graph with a vertex $x$ of degree $10$ such that
  $\overline{G_x}$ is connected and cubic. Moreover, suppose that
  there is a vertex $z \in V(G) \setminus N_G[x]$ such that $G$
  contains at least six internally vertex-disjoint $(x,z)$-paths. Then
  $G$ contains a $K_8^-$ minor.\label{lem:cubic}
\end{lemma}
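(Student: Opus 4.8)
The plan is to use the six internally vertex-disjoint $(x,z)$-paths together with the structure of the cubic graph $\overline{G_x}$ to build a $K_8^-$ minor whose branch sets are $\{x\}$, a set built around $z$ from the paths, and six further branch sets inside $N_G(x)$. Since $\overline{G_x}$ is cubic on $10$ vertices, $G_x$ is $6$-regular on $10$ vertices, so $G_x$ is quite dense; the obstacle is that a single vertex of $G_x$ is non-adjacent to exactly three others, so we cannot simply take seven singleton branch sets in $N_G[x]$. Instead I would aim to find in $G_x$ a partition (or partial partition) of $N_G(x)$ into six connected pieces that are pairwise adjacent except for one missing pair — i.e.\ a $K_6^-$-type partition of $G_x$ using all or most of the ten vertices — and then attach $\{x\}$ (adjacent to everything) and the $z$-branch set to get $K_8^-$.

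First I would record that $z \notin N_G[x]$ and that the six internally vertex-disjoint $(x,z)$-paths each meet $N_G(x)$ (each path leaves $x$ through a neighbour of $x$); by shortcutting, I may assume each path $P_i$ consists of $x$, a first vertex $w_i \in N_G(x)$, and then an internally disjoint $(w_i,z)$-segment $R_i$ avoiding $N_G[x]$ except possibly at its end — actually I should be careful and only assume the $R_i$ are internally disjoint from each other and from $N_G[x]$, with the $w_i$ six distinct vertices of $N_G(x)$. Contracting each $R_i$ (together with $z$, for one of them) gives a single connected set $Z$ in $V(G)\setminus N_G[x]$ that is adjacent to all six of $w_1,\dots,w_6$. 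So $\{x\}$ and $Z$ are two branch sets, $\{x\}$ adjacent to $Z$ (via any $P_i$) and to every vertex of $N_G(x)$, and $Z$ adjacent to each $w_i$. It now suffices to partition the remaining structure so that $w_1,\dots,w_6$ become the cores of six branch sets inside $N_G(x)$ that are pairwise adjacent in $G_x$ with at most one exception.

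The combinatorial heart is therefore: in the $6$-regular $10$-vertex graph $G_x$, find six disjoint connected sets $W_1,\dots,W_6$ with $w_i \in W_i$, covering enough of $N_G(x)$, such that $G_x[W_i \cup W_j]$ has an edge for all but at most one pair $\{i,j\}$. I expect this to follow from a short case analysis on the cubic graph $\overline{G_x}$: a vertex $u$ and $w_i$ are non-adjacent in $G_x$ iff $uw_i \in E(\overline{G_x})$, and since $\overline{G_x}$ is cubic, each $w_i$ has only three non-neighbours among the other nine vertices of $N_G(x)$. One clean route: among $w_1,\dots,w_6$ themselves, the induced subgraph $\overline{G_x}[\{w_1,\dots,w_6\}]$ has maximum degree $\le 3$ and at most (roughly) $9$ edges, so the six singletons $\{w_i\}$ already form pairwise-adjacent branch sets in $G_x$ unless some pairs $w_iw_j$ lie in $\overline{G_x}$; for each such "bad" pair we absorb one of the (at most four) leftover vertices of $N_G(x)\setminus\{w_1,\dots,w_6\}$ into some $W_i$ to create the missing adjacency, using that $\overline{G_x}$ is cubic so a leftover vertex is adjacent in $G_x$ to all but three of the $w_i$'s. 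Choosing which leftover vertices to attach — and checking that, after these absorptions, the number of still-missing pairs drops to at most one — is the main obstacle, and I would handle it by first reducing to the subcase where $\overline{G_x}[\{w_1,\dots,w_6\}]$ is as dense as possible and then observing that the $\le 4$ spare vertices, each adjacent in $G_x$ to $\ge 3$ of the $w_i$, suffice to repair all but one missing pair; when even this fails, the cubic graph $\overline{G_x}$ is so constrained (small girth or a specific structure) that $G_x$ contains $K_7$ outright on some seven vertices of $N_G(x)$, which together with $\{x\}$ already yields $K_8 \ge K_8^-$.
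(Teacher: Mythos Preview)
Your setup (contract the tails of six internally disjoint $(x,z)$-paths into a single branch set $Z$ adjacent to six designated vertices $w_1,\dots,w_6$ of $N_G(x)$, then look for six branch sets $W_i\ni w_i$ inside $G_x$ that are pairwise adjacent with at most one exception) is coherent, but the combinatorial heart---showing such $W_1,\dots,W_6$ always exist---is not actually established. You sketch a repair-by-absorption scheme and then invoke a fallback: when the scheme stalls, $\overline{G_x}$ is allegedly ``so constrained (small girth or a specific structure) that $G_x$ contains $K_7$ outright''. That fallback fails for the Petersen graph: it has girth $5$, and its complement contains no $K_7$ (indeed, as the paper notes, not even a $K_7$ \emph{minor}). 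So the hardest case is exactly the one your safety net does not cover. A second issue is that the six vertices $w_1,\dots,w_6$ are forced on you by the paths, not chosen; depending on which six of the ten they are, the induced subgraph $\overline{G_x}[\{w_1,\dots,w_6\}]$ can have up to six edges (e.g.\ in the Petersen case), leaving six missing pairs to be repaired with only four spare vertices---this can sometimes be done, but you have not shown it always can, and your argument that ``the $\le 4$ spare vertices \dots\ suffice to repair all but one missing pair'' is unsubstantiated.

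For comparison, the paper takes a quite different route: it enumerates the $19$ connected cubic $10$-vertex graphs and shows that for $14$ of them $G_x$ itself already contains a $K_7^-$ (or $K_7$) minor using all ten vertices, so $\{x\}$ completes a $K_8^-$ without touching the paths at all. For the remaining five (including the Petersen graph), it partitions $N_G(x)$ into five pairs of non-adjacent vertices; by pigeonhole two of the six $(x,z)$-paths pass through the same pair, and contracting those two paths creates one extra edge in the neighbourhood of $x$, after which an explicit $K_7^-$ minor is exhibited. So the paper uses the paths much more sparingly (two of them, to insert a single well-chosen edge) and then relies on a finite case check. If you want to salvage your approach, you will likely end up doing a case analysis of comparable size on $\overline{G_x}$ and on the position of $\{w_1,\dots,w_6\}$ inside it; absent that, the argument as written has a genuine gap at the absorption step.
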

\begin{proof}
  Suppose $G$ is a $6$-connected graph with a vertex $x$ of degree
  $10$, where $\overline{G_x}$ is a connected cubic graph. There are
  exactly $21$ non-isomorphic cubic graphs of order $10$, see, for
  instance,~\cite{MR1692656}. These $21$ non-isomorphic cubic graphs
  of order $10$ are depicted in Appendix A; let these graphs be
  denoted as in Appendix A. If $\overline{G_x} \simeq G_i$, where $i
  \in [19] \setminus \{7, 8, 9, 12, 17 \}$, then the labelling of the
  vertices of the graph $G_i$ indicates how $\overline{G_i}$ may be
  contracted to $K_{7}^{-}$ or $K_7$. The vertices labelled $j \in
  [7]$ constitute the $j$th branch set of a $K_{7}^{-}$ minor or $K_7$
  minor. If the branch sets only constitute a $K_{7}^{-}$ minor, then
  it is because there is no edge between the branch sets of vertices
  labelled $1$ and $7$, respectively.
\begin{figure}[htbp!]
  \begin{center}
\mbox{
\subfigure[The graph $G_7$.]{\scalebox{0.26}{\input{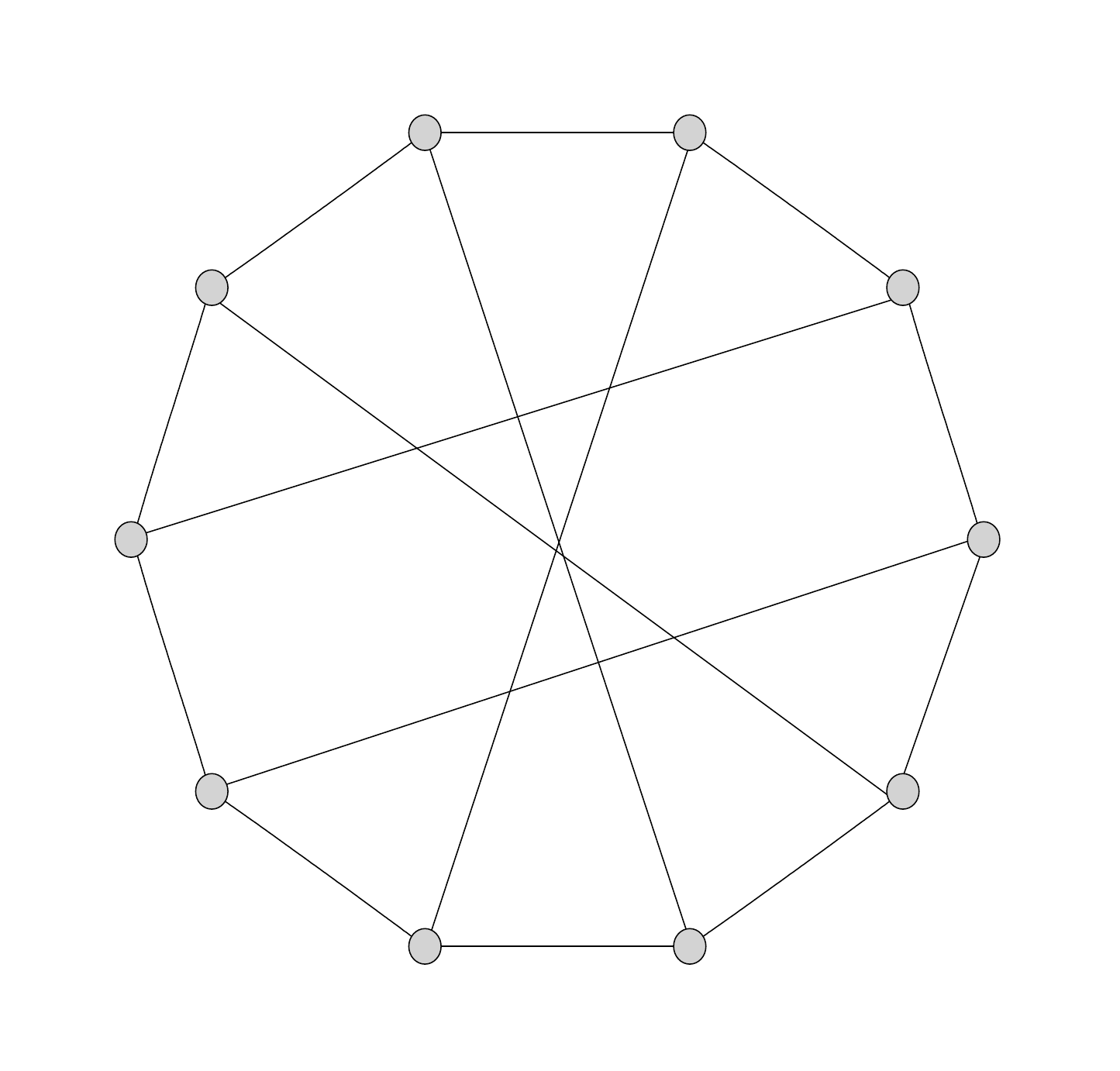_t}}}
\subfigure[The graph $G_7'$.]{\scalebox{0.26}{\input{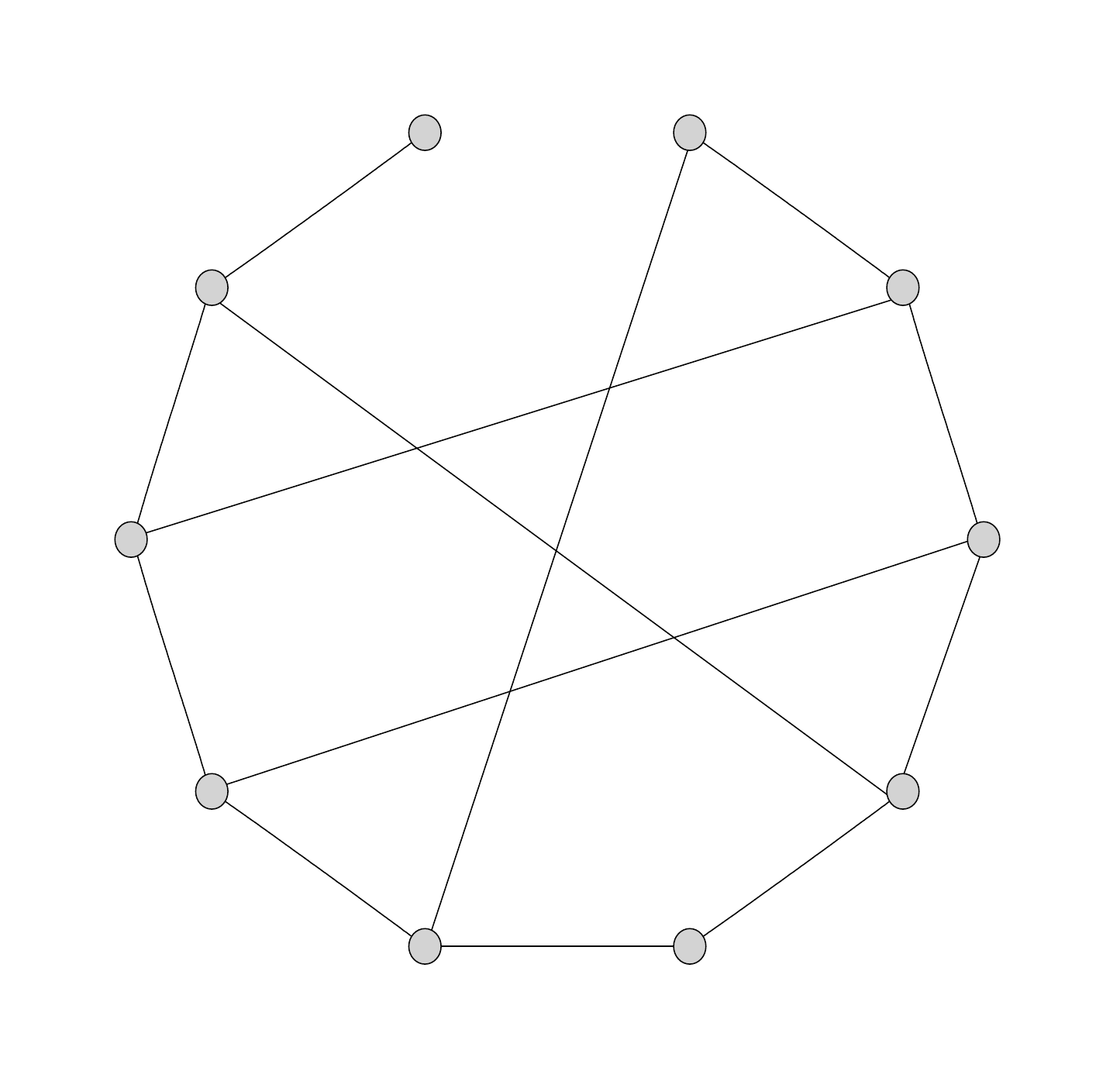_t}}}  
\subfigure[The graph $G_8$.]{\scalebox{0.26}{\input{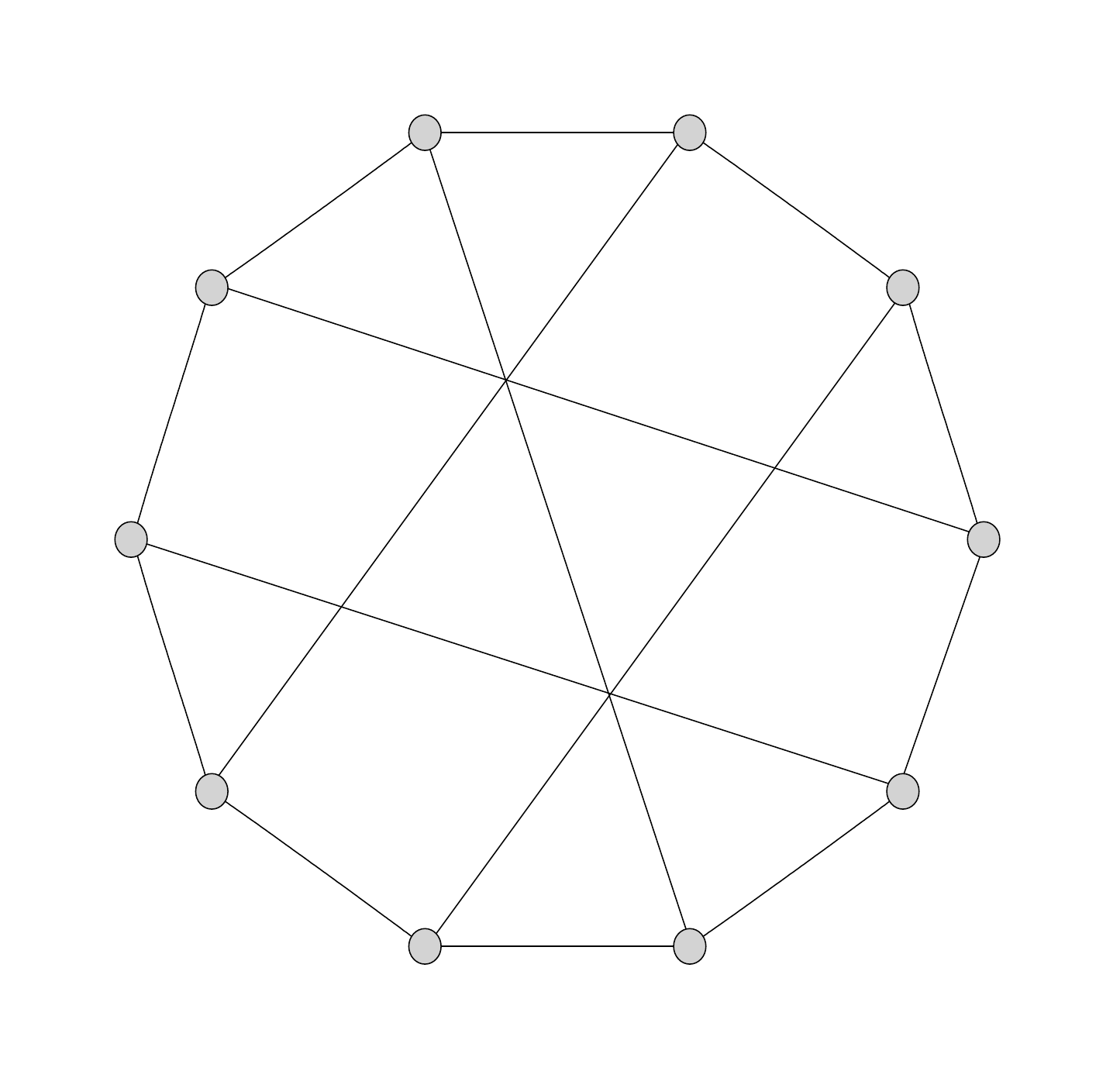_t}}} }
\caption{The graphs $G_7$, $G_7'$, and $G_8$, which occur in the cases (i) and (ii) in the proof of Lemma~\ref{lem:cubic}.} 
\label{fig:G7G8}
 \end{center}
\end{figure}
In order to handle the cases $\overline{G_x} \simeq G_i$, where $i
\in \{7, 8, 9, 12, 17 \}$, we use the assumption that $V(G)
\setminus N_G[x]$ contains a vertex $z$ such that $G$ has a collection
$\mc{R}$ of at least six internally vertex-disjoint $(x,z)$-paths.
\begin{itemize}
\item[(i)] Suppose $\overline{G_x} \simeq G_7$ with the vertices of
  $\overline{G_x}$ labelled as shown in Figure~\ref{fig:G7G8}~(a). Let
  $\mc{S}$ denote the collection of the five $2$-sets $\{ v_1, v_6
  \}$, $\{ v_2, v_7 \}$, $\{ v_3, v_9 \}$, $\{ v_4, v_8 \}$ and $\{
  v_5, v_{10} \}$. Since the $2$-sets in $\mc{S}$ are pairwise
  disjoint and cover $N_G(x)$, it follows from the pigeonhole
  principle that at least two of the internally vertex-disjoint
  $(x,z)$-paths, say $Q_1$ and $Q_2$, of $\mc{R}$ go through the
  same $2$-set $S \in \mc{S}$. If $S = \{ v_i, v_j \} \in \mc{S}
  \setminus \{ \{ v_1, v_6 \} \}$, then, by contracting the $(v_i,
  v_j)$-path $( Q_1 \cup Q_2) - x$ into the edge $v_iv_j$, we obtain a
  graph which, as is readily verifiable, has a $K_7^-$ minor in the
  neighbourhood of $x$ and so $G \geq K_8^-$. Hence, we may assume
  that $\mc{R}$ contains no such two paths going through the same
  $2$-set of $\mc{S} \setminus \{ \{ v_1, v_6 \} \}$. Hence $S = \{ v_1,
  v_6 \}$ with say $Q_1$ and $Q_2$ going through $v_1$ and $v_6$,
  respectively. Since $|\mc{R}| \geq 6$, there is precisely one path
  going through each of the sets $S' \in \mc{S} \setminus \{ \{v_1, v_6
  \} \}$. By symmetry of $\overline{G_x}$, we may assume that there is an
  $(x,z)$-path $Q_3 \in \mc{R}$ going through the vertex $v_2$ of
  $N_G(x)$. Now, by contracting the $(v_2, z)$-path $Q_3 - x$ and the
  $(v_6, z)$-path $Q_2 - x$ into two edges, and then contracting the
  $(v_1,z)$-path $Q_1 - x$ into one vertex, we obtain a graph $G'$ in
  which the neighbourhood graph $G'[N_G(x)]$ of $x$ contains the
  complement of the $G_7'$, depicted in Figure~\ref{fig:G7G8}~(b), as
  a subgraph. The branch sets $\{ v'_1 \}$, $\{ v'_2 \}$, $\{ v'_3,
  v'_5 \}$, $\{ v'_4, v'_9 \}$, $\{ v'_6 \}$, $\{ v'_7, v'_{10} \}$,
  $\{ v'_8 \}$ constitute a $K_7^-$ minor in $\overline{G_7'}$ (there may
  be no edge between the branch sets $\{ v'_8 \}$ and $\{ v'_4, v'_9
  \}$), and so $G \geq K_8^-$.
\item[(ii)] Suppose $\overline{G_x} \simeq G_8$ with the vertices of
  $\overline{G_x}$ labelled as shown in Figure~\ref{fig:G7G8}~(c). In
  this case we contract a path $(P \cup Q) - x$, where $P,Q \in
  \mc{R}$, into an edge $e \in \{ v_1v_6, v_2v_8, v_3v_7, v_4v_{10},
  v_5 v_9 \}$, which is missing in $G_x$. By the symmetry of $G_x$, we
  need only consider the cases $e = v_1 v_6$ and $e = v_2 v_8$. If $e
  = v_1 v_6$, then the branch sets $\{ v_1, v_5 \}$, $\{ v_2 \}$, $\{
  v_3, v_9 \}$, $\{ v_4, v_7 \}$, $\{ v_6 \}$, $\{ v_8 \}$, and $\{
  v_{10} \}$ constitute a $K_7^-$ minor in the neighbourhood of
  $x$. If $e=v_2 v_8$, then the branch sets $\{ v_1, v_9 \}$, $\{ v_2
  \}$, $\{ v_3, v_6 \}$, $\{ v_4, v_7 \}$, $\{ v_5 \}$, $\{ v_8 \}$,
  and $\{ v_{10} \}$ constitute a $K_7^-$ minor in the neighbourhood
  of $x$. In both cases we obtain $G \geq K_8^-$.
\item[(iii)] Suppose $\overline{G_x} \simeq G_9$ with the vertices of
  $\overline{G_x}$ labelled as shown in Figure~\ref{fig:G9G12G17}~(a). Just as
  in case (ii), we contract a path $(P\cup Q) - x$, where $P,Q \in
  \mc{R}$, into an edge $e \in \{ v_1v_6, v_2v_{10}, v_3v_7, v_4v_8,
  v_5 v_9 \}$. By the symmetry of $G_x$, we need only consider $e \in
  \{ v_1 v_6, v_2 v_{10}, v_3 v_7, v_4 v_8 \}$. If $e = v_1 v_6$, then
  the branch sets $\{ v_1 \}$, $\{ v_2, v_5 \}$, $\{ v_3 \}$, $\{ v_4,
  v_9 \}$, $\{ v_6 \}$, $\{ v_7, v_{10} \}$, and $\{ v_8 \}$ constitute a
  $K_7^-$ minor in the neighbourhood of $x$. If $e = v_2 v_{10}$, then
  the branch sets $\{ v_1, v_8 \}$, $\{ v_2 \}$, $\{ v_3, v_5 \}$, $\{
  v_4 \}$, $\{ v_6, v_9 \}$ $\{ v_7 \}$, and $\{ v_{10} \}$ constitute a
  $K_7^-$ minor in the neighbourhood of $x$. If $e = v_3 v_7$, then
  the branch sets $\{ v_1, v_8 \}$, $\{ v_2, v_6 \}$, $\{ v_3 \}$, $\{
  v_4, v_{10} \}$, $\{ v_5 \}$, $\{ v_7 \}$, and $\{ v_9 \}$ constitute a
  $K_7^-$ minor in the neighbourhood of $x$. If $e = v_4 v_8$, then
  the branch sets $\{ v_1 \}$, $\{ v_2, v_5 \}$, $\{ v_3, v_9 \}$, $\{
  v_4 \}$, $\{ v_6 \}$, $\{ v_7, v_{10} \}$, and $\{ v_8 \}$ constitute a
  $K_7^-$ minor in the neighbourhood of $x$. In each case we obtain $G
  \geq K_8^-$.
\item[(iv)] Suppose $\overline{G_x} \simeq G_{12}$ with the vertices
  of $\overline{G_x}$ labelled as in Figure~\ref{fig:G9G12G17}~(b). Again, we
  contract a path $(P \cup Q)-x$, where $P,Q \in \mc{R}$, into an edge $e \in \{
  v_1v_6, v_2v_4, v_3v_7, v_5v_9, v_8v_{10} \}$. By the symmetry of $G_x$, we need only consider the cases $e
  \in \{ v_1 v_6, v_2 v_4, v_3 v_7 \}$. If $e = v_1 v_6$, then the
  branch sets $\{ v_1 \}$, $\{ v_2, v_7 \}$, $\{ v_3 \}$, $\{ v_4,
  v_{10} \}$, $\{ v_5, v_9 \}$, $\{ v_6 \}$, and $\{ v_8 \}$ constitute a
  $K_7^-$ minor in the neighbourhood of $x$. If $e = v_2 v_4$, then
  the branch sets $\{ v_1, v_5 \}$, $\{ v_2 \}$, $\{ v_3, v_8 \}$, $\{
  v_4 \}$, $\{ v_6 \}$, $\{ v_7, v_{10} \}$, and $\{ v_9 \}$ constitute a
  $K_7^-$ minor in the neighbourhood of $x$. If $e = v_3 v_7$, then
  the branch sets $\{ v_1, v_9 \}$, $\{ v_2, v_6 \}$, $\{ v_3 \}$, $\{
  v_4, v_8 \}$, $\{ v_5 \}$, $\{ v_7 \}$, and $\{ v_{10} \}$ constitute a
  $K_7^-$ minor in the neighbourhood of $x$. In each case we obtain $G
  \geq K_8^-$.
\item[(v)] Suppose $\overline{G_x} \simeq G_{17}$ with the vertices of
  $\overline{G_x}$ labelled as shown in
  Figure~\ref{fig:G9G12G17}~(c). The graph $G_{17}$ is the Petersen
  graph, and the complement of the Petersen graph does not contain a
  $K_7$ minor. However, we may repeat the trick used in the previous
  cases to obtain a $K_7^-$ minor. We contract a path $(P \cup Q)-x$,
  where $P,Q \in \mc{R}$, into an edge $e \in \{ v_iv_{i+5} \mid i \in
  [5] \}$. By the symmetry of $G_x$, we may assume $e=v_1v_6$. Now,
  the branch sets $\{ v_1 \}$, $\{ v_2, v_8 \}$, $\{ v_3 \}$, $\{ v_4,
  v_{10} \}$, $\{ v_5, v_9 \}$, $\{ v_6 \}$, and $\{ v_7 \}$
  constitute a $K_7^-$ minor in the neighbourhood of $x$. Thus, $G$
  contains a $K_8^-$ minor.
\end{itemize}
This completes the proof.
\end{proof}

\begin{figure}[htbp!]
  \begin{center}
\mbox{
\subfigure[The graph $G_9$.]{\scalebox{0.26}{\input{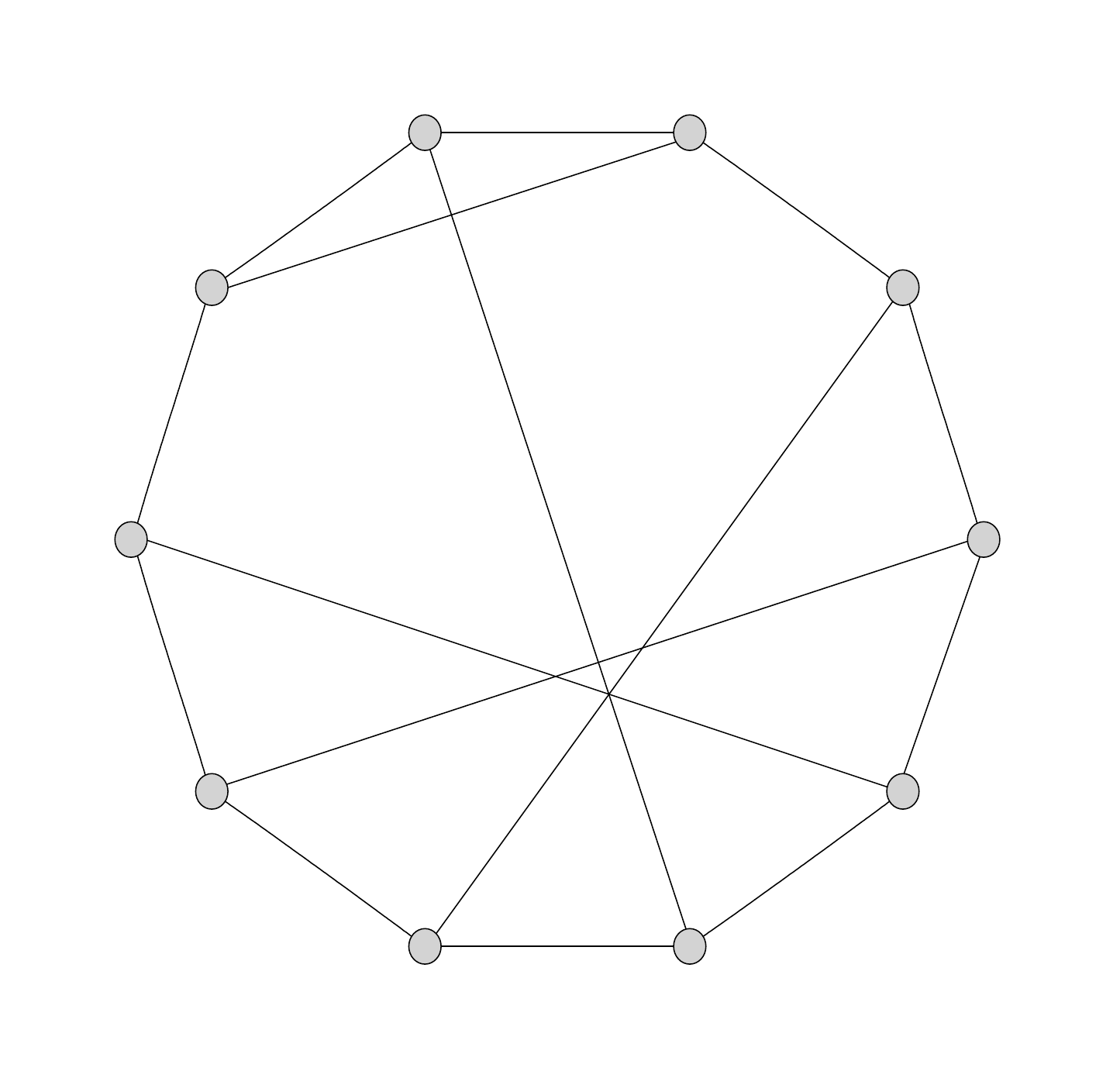_t}}}
\subfigure[The graph $G_{12}$.]{\scalebox{0.26}{\input{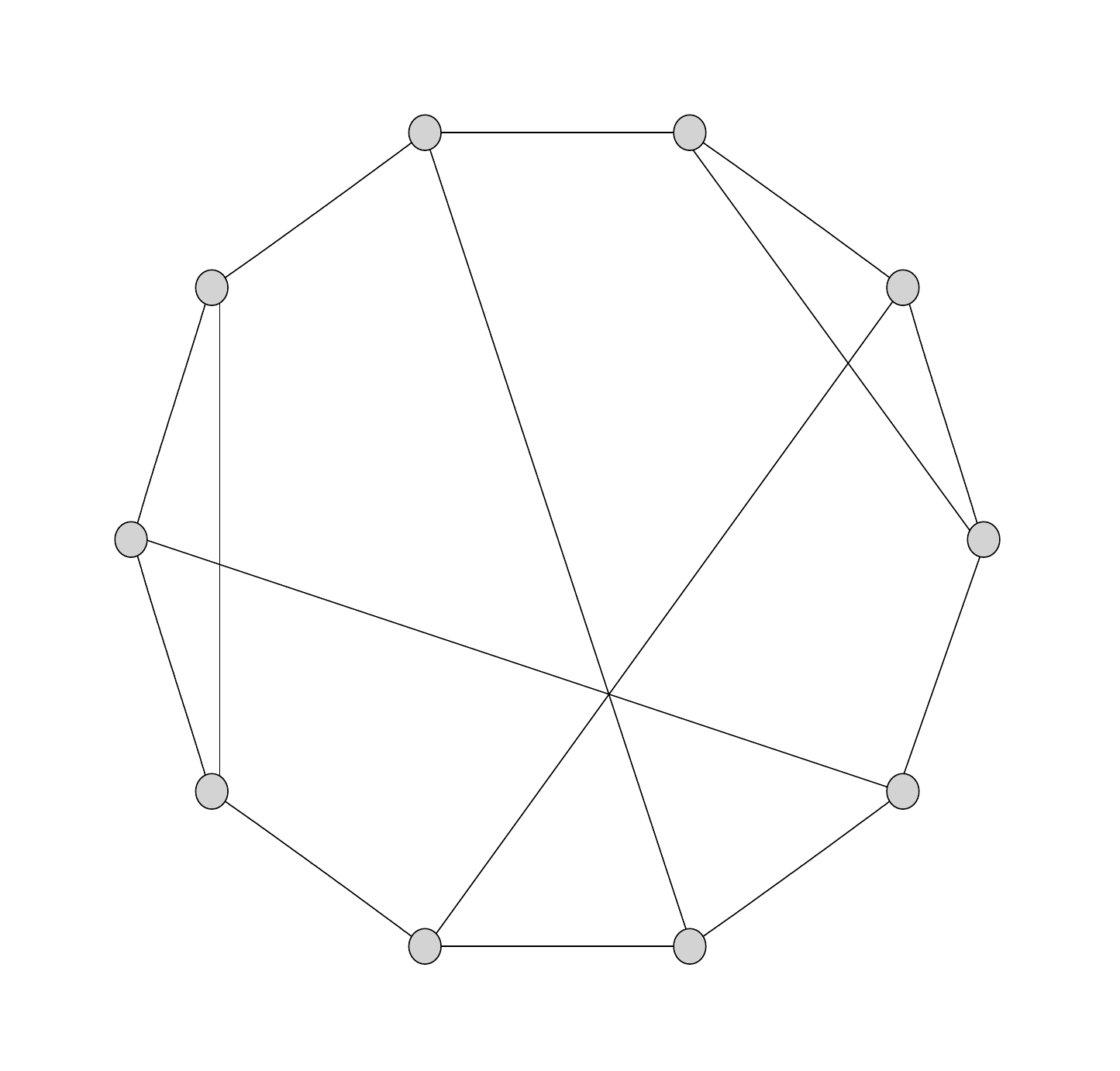_t}}}
\subfigure[The graph $G_{17}$.]{\scalebox{0.26}{\input{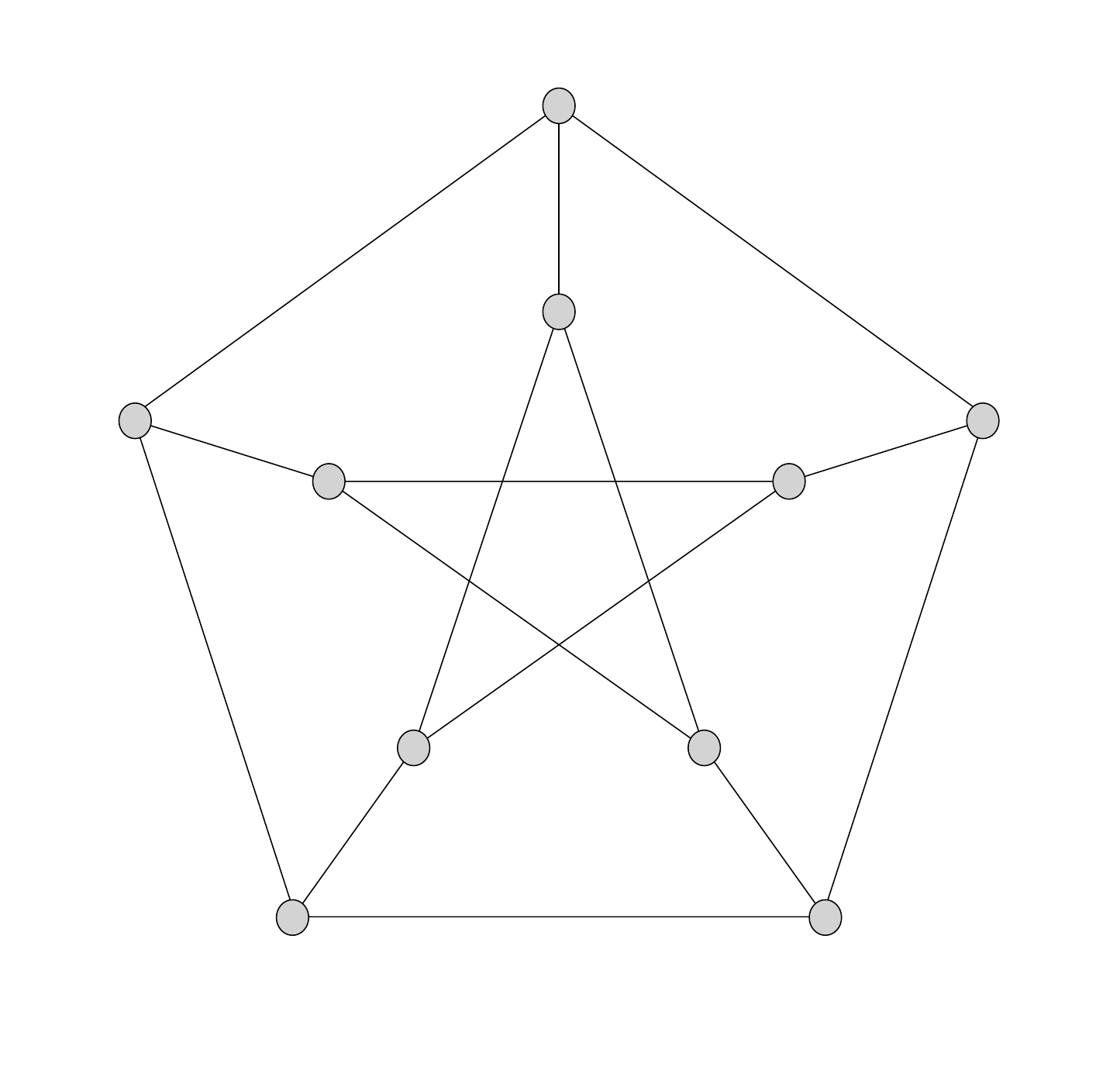_t}}}}
\caption{The graphs $G_9$, $G_{12}$, and $G_{17}$, which occur in the
  cases (iii), (iv), and (v) in the proof of
  Lemma~\ref{lem:cubic}.}
\label{fig:G9G12G17}
 \end{center}
\end{figure}
Notice that in each of the cases (i-v) in the proof of
Lemma~\ref{lem:cubic} we used the regularity of $G_x$ and the six
internally vertex-disjoint $(x,z)$-paths of $G$, but we did not assume
$G$ to be double-critical. It may be possible to relax the assumptions
of Lemma~\ref{lem:cubic} and still maintain the conclusion. It may
even be that Lemma~\ref{lem:cubic} follows from an earlier result
similar in spirit to that of Theorem~\ref{th:Mader}.
\begin{proposition}\label{prop:importantProposition2}
  Suppose $G$ is a double-critical $8$-chromatic graph with minimum
  degree $10$. If $G$ contains a vertex $x$ of degree $10$ such that
  $G_x$ contains no vertex of degree $9$ in $G_x$, then $G$ contains a
  $K_{8}^{-}$ minor.
\end{proposition}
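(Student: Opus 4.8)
Write $H := \overline{G_x}$. The plan begins by pinning down the structure of $H$. Since $G_x$ has no vertex of degree $9$, the graph $H$ has no isolated vertex; since $A(x,y)=N_H(y)$ for every $y\in N_G(x)$ and $G[A(x,y)]$ has no isolated vertex by Proposition~\ref{prop:noIsolatedInAxy}, $H$ has no vertex of degree $1$; and $\Delta(H)\le 3$ by Observation~\ref{obs:432785932465}. Hence $H$ is a graph on the $10$ vertices of $N_G(x)$ in which every degree equals $2$ or $3$. Moreover $\delta(G)=10>7$, so $G$ is not complete and thus $n(G)\ge 15$ by Proposition~\ref{prop:atLeast15vertices}; in particular there is a vertex $z\in V(G)\setminus N_G[x]$, and since $G$ is $6$-connected (Proposition~\ref{prop:doubleCriticalImpliesSixConnected}(iii)), Menger's Theorem yields a family $\mc{R}$ of six internally vertex-disjoint $(x,z)$-paths. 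Replacing the initial segment of each path up to its last vertex in $N_G(x)$ by an edge, we may assume each path of $\mc{R}$ meets $N_G(x)$ in a single vertex, these six vertices being distinct.

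Next I would split on whether $H$ is cubic. If $H$ is cubic and disconnected, then — the components of a cubic graph having even order at least $4$, and $6+4$ being the only such partition of $10$ — $H$ has a $K_4$-component; choosing $y$ in it gives $G[A(x,y)]=\overline{H[N_H(y)]}=\overline{K_3}$, contradicting Proposition~\ref{prop:noIsolatedInAxy}. If $H$ is cubic and connected, then Lemma~\ref{lem:cubic} applies verbatim with the $z$ and $\mc{R}$ just produced, so $G\ge K_8^-$. Hence I may assume $H$ is not cubic; let $t$ be the number of degree-$3$ vertices of $H$, so that $t$ is even, $t\le 8$, $|E(H)|=10+t/2$, $|E(G_x)|=35-t/2$, and $|E(G[N_G[x]])|=45-t/2$.

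If $t\le 2$, then $G[N_G[x]]$ — a graph on $11\ge 8$ vertices — has at least $44>\lceil (11\cdot 11-35)/2\rceil=43$ edges, hence a $K_8^-$ minor by Theorem~\ref{th:JorgensenAndSongOnGraphMinors}(i). If $t\in\{4,6,8\}$, the idea is to complete $H$ to a cubic graph: pick a perfect matching $M$ of the $10-t$ degree-$2$ vertices of $H$ among the non-edges of $H$. (Such an $M$ exists once one verifies, via Propositions~\ref{prop:noIsolatedInAxy} and~\ref{prop:antiMatchingBetweenAandB}, that $H$ has no $K_3$-component, no $C_4$-component and no $K_4^-$-component, so that the degree-$2$ vertices induce a graph of maximum degree at most $2$ whose non-edges carry a perfect matching; the single exception, $t=8$ with the two degree-$2$ vertices adjacent, must be handled separately.) Put $H^+:=H+M$, a connected cubic graph on $10$ vertices — connected because a non-cubic $H$ with no component of order less than $5$ is either connected or a disjoint union of two $5$-vertex graphs, in which case $M$ necessarily uses a cross-edge — so $H^+\simeq G_i$ for some $i\in[19]$. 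Since $M$ joins vertices non-adjacent in $H$ we have $M\subseteq E(G_x)$, so $\overline{H^+}=G_x-M$ is a spanning subgraph of $G_x$. If $i\in[19]\setminus\{7,8,9,12,17\}$, then by the proof of Lemma~\ref{lem:cubic} the graph $\overline{H^+}=\overline{G_i}$ contains a $K_7$ or $K_7^-$ minor with all branch sets in $N_G(x)$; it persists in $G_x$, and with the branch set $\{x\}$ gives $G\ge K_8^-$. If $i\in\{7,8,9,12,17\}$, then the pigeonhole/symmetry argument of Lemma~\ref{lem:cubic} supplies two paths $Q_1,Q_2\in\mc{R}$ and an edge $e\in E(H^+)=E(H)\cup M$ such that $\overline{H^+}+e$ has a $K_7^-$ minor inside $N_G(x)$: if $e\in M$ then $e\in E(G_x)$ and this minor already lies in $G_x$, while if $e\in E(H)$ then $e\notin E(G_x)$ and contracting $(Q_1\cup Q_2)-x$ into the edge $e$ realizes it. In either case $G\ge K_8^-$.

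The hard part is making the $t\in\{4,6,8\}$ argument airtight, and there are two delicate points. First, one must confirm that a non-edge matching $M$ on the degree-$2$ vertices always exists, which requires a short but genuine enumeration of the local configurations around degree-$2$ vertices that the earlier propositions permit; the recalcitrant case is $t=8$ with the two degree-$2$ vertices adjacent, which admits no completion to a cubic graph on $N_G(x)$ and would instead need a direct $K_7^-$-minor construction in $G_x$, presumably using a single contracted $(x,z)$-path to supply the one missing edge, in the spirit of the argument in Proposition~\ref{prop:43287530893245}. Second, when the edge $e$ of Lemma~\ref{lem:cubic} is forced to lie in $E(H)$ rather than in $M$, one must re-run that lemma's pigeonhole step for the particular graph $G_i$ and edge $e$ at hand, since the six first-vertices of $\mc{R}$ in $N_G(x)$ need not be placed as in its cases (i)--(v). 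This transfer of Lemma~\ref{lem:cubic} through the complementation $H\mapsto H^+$, together with the residual $t=8$ configuration, is exactly the kind of bookkeeping that makes this result "surprisingly complicated".
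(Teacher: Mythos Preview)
Your overall strategy is essentially the paper's: pin down $H=\overline{G_x}$ to degrees $2$ and $3$, reduce to the connected cubic case via Lemma~\ref{lem:cubic} by adding edges between non-adjacent $2$-vertices, and deal with whatever is left. A few differences and one real gap:

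\textbf{Differences.} Your use of Theorem~\ref{th:JorgensenAndSongOnGraphMinors}(i) for $t\le 2$ is a pleasant shortcut the paper does not take; the paper instead handles all non-cubic connected $H$ uniformly through the $D\mapsto D'$ construction. The paper also treats the disconnected case up front (showing both components must have five vertices and exhibiting a $K_7$ minor in $G_x$ directly), whereas you fold it into the matching argument. Finally, your ``second delicate point'' is not actually an issue: the paper simply applies Lemma~\ref{lem:cubic} to $G':=G\setminus M$, noting that the six $(x,z)$-paths can be chosen to avoid $E(G_x)$ entirely, so they survive in $G'$ and the lemma's pigeonhole runs unchanged.

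\textbf{The genuine gap.} Your ``recalcitrant case'' --- $t=8$ with the two degree-$2$ vertices adjacent, where no cubic completion on $N_G(x)$ exists --- is exactly where the real work lies, and you do not resolve it. The paper's solution is \emph{not} the single-path contraction you suggest: instead it observes that there are precisely $23$ connected $10$-graphs with two adjacent $2$-vertices and eight $3$-vertices (verified by computer enumeration), and exhibits in Appendix~B an explicit $K_7^-$ (or $K_7$) minor in the complement of each. Since $\overline{D'}\subseteq G_x$, this yields $G\ge K_8^-$ directly, without using $z$ or the paths at all. Your proposal is correct in outline but incomplete until this enumeration (or some substitute for it) is supplied.
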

\begin{proof}
  Suppose $G$ is a double-critical $8$-chromatic graph with minimum
  degree $10$, and suppose $G$ contains a vertex $x$ of degree $10$
  such that $G_x$ contains no vertex of degree $9$ in $G_x$. Then it
  follows from Proposition~\ref{prop:noIsolatedInAxy} and
  Observation~\ref{obs:432785932465} that each vertex of
  $\overline{G_x}$ has degree $2$ or $3$.

  We first consider the case where $\overline{G_x}$ is
  disconnected. Since $\delta(\overline{G_x}) \geq 2$, it follows that
  any component of $\overline{G_x}$ contains at least three vertices.
  If $\overline{G_x}$ contains a component on three vertices, then
  this component is a $K_3$; this contradicts
  Proposition~\ref{prop:noIsolatedInAxy}. Hence, each component of
  $\overline{G_x}$ contains at least four vertices, and so, since
  $n(G_x)=10$, it follows that $\overline{G_x}$ contains precisely two
  components, say $D_1$ and $D_2$ with $n(D_1) \leq n(D_2)$. Suppose
  $n(D_1)=4$. The fact that $\delta(\overline{G_x}) \geq 2$ implies
  that $D_1$ must contain a $4$-cycle, and so it is easy to see that
  $D_1$ must be $C_4$, $K_{4}^{-}$ or $K_4$. This, however,
  contradicts Proposition~\ref{prop:noIsolatedInAxy}, and so we must
  have $n(D_1) = n(D_2) = 5$. Of course, if $G'$ is a subgraph of $G$,
  and $G'$ contains an $H$ minor, then $G$ contains an $H$
  minor. Thus, it suffices to consider the case where both $D_1$ and
  $D_2$ contain exactly one vertex of degree $2$, in which case both
  $D_1$ and $D_2$ is isomorphic to $K_4$ with exactly one edge
  subdivided. In this case it is very easy to find a $K_7$ minor in
  $G_x$.

  Suppose that $\overline{G_x}$ is connected, and let $D$ denote
  $\overline{G_x}$. By Proposition~\ref{prop:atLeast15vertices}, we
  may assume there is a vertex $z \in V(G) \setminus N_G[x]$, and, by
  Proposition~\ref{prop:doubleCriticalImpliesSixConnected}~(iii), there
  are six internally vertex-disjoint $(x,z)$-paths in $G$. If $D$ is
  cubic, then, according to Lemma~\ref{lem:cubic}, $G \geq
  K_8^-$. Suppose that $D$ is not cubic. We add edges (possibly none!)
  between non-adjacent $2$-vertices to $D$ to obtain $D'$, which
  contains no two non-adjacent $2$-vertices. If $D'$ is cubic, then
  $G' := G \setminus (E(D')\setminus E(D))$ satisfies the assumption
  of Lemma~\ref{lem:cubic}.  (The graph $D'$ is connected, cubic
  $10$-graph and the graph $G'$ has six internally vertex-disjoint
  $(x,z)$-paths, since $G$ has six internally vertex-disjoint
  $(x,z)$-paths, and these may be chosen so that they do not contain
  any edge of $E(G[N_G(x)])$.)  Thus, $G' \geq K_8^-$, which implies
  that the supergraph $G$ of $G'$ has a $K_8^-$ minor.

  Now, suppose $D'$ is not cubic. The graph $D'$ contains no two
  non-adjacent $2$-vertices. Moreover, $D'$ is a connected $10$-graph
  in which each vertex has degree $2$ or $3$. Thus, since the number
  of odd degree vertices of any graph is even it follows that $D'$
  contains exactly two $2$-vertices and these must be
  neighbours. There are exactly 23 connected $10$-graphs each with two
  $2$-vertices and eight $3$-vertices, where the two $2$-vertices are
  adjacent\footnote{According to the computer program {\tt geng}
    developed by Brendan McKay~\cite{nauty}, there are 113 connected
    graphs of order $10$ each with two $2$-vertices and eight
    $3$-vertices -- among these graphs exactly 23 have the property
    that the two $2$-vertices are adjacent. This latter fact has been
    determined, independently, by inspection done by the author and by
    a computer program developed by Marco Chiarandini.}. These graphs,
  denoted $J_i$ ($i \in [23]$), are depicted in Appendix B. For each
  $i \in [23]$, the labelling of the vertices of the graph $J_i$
  indicates how $\overline{J_i}$ may be contracted to $K_{7}^{-}$ or,
  even, $K_7$; the vertices labelled $j \in [7]$ constitute the $j$th
  branch set of a $K_{7}^{-}$- or $K_7$ minor. If the branch sets only
  constitute a $K_{7}^{-}$ minor, then it is because there is no edge
  between the branch sets labelled $1$ and $7$. This completes the
  proof.
\end{proof}
\section{More open problems}
The Double-Critical Graph Conjecture is still open for $6$-chromatic
graphs. To settle this instance of the conjecture in the affirmative,
it would, by Proposition~\ref{prop:ElemPropertiesOfDC}~(i),
suffice to prove that any double-critical $6$-chromatic graph contains
$K_5$ as a subgraph; however, we cannot even prove that such a graph
contains $K_4$ as a subgraph.
\begin{problem}[Matthias Kriesell\footnote{Private
    communication to the author, Odense, September, 2008.}]
  Prove that every double-critical $6$-chromatic graph contains $K_4$
  as a subgraph. \label{prob:Kriesell}
\end{problem}
In~\cite{KawarabayashiPedersenToftEJC2010}, it was proved that every
double-critical $6$-chromatic graph contains a $K_6$ minor; a stronger
result would be that every double-critical $6$-chromatic graph
contains a subdivision of $K_6$.
\begin{problem}\label{prob:K6subdiv}
Prove that every double-critical $6$-chromatic graph $G$ contains
  a subdivision of $K_6$.
\end{problem}
According to Observation~\ref{obs:34978kfjdghs},
Problem~\ref{prob:K6subdiv} has a positive solution if $G$ has minimum degree at most $7$.

Mader~\cite{MR1722261} proved a longstanding conjecture, known as
Dirac's Conjecture, which states that any graph $G$ with at least
three vertices and at least $3n(G) - 5$ edges contains a subdivision
of $K_5$. Thus, in particular, any double-critical
\mbox{$6$-chromatic} graph $G$ contains a subdivision of $K_5$.
\begin{observation}\label{obs:34978kfjdghs}
  Any double-critical $6$-chromatic graph with minimum degree at most
  $7$ contains a subdivision of $K_6$.
\end{observation}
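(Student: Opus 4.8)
The plan is to prove the much stronger fact that, apart from $K_6$ itself, \emph{there is no} double-critical $6$-chromatic graph with minimum degree at most $7$; the observation then follows at once, since a graph trivially contains a subdivision of itself. First I would dispose of the complete case: if $G \simeq K_6$, then $\delta(G) = 5 \le 7$ and $G$ contains (indeed, is) a subdivision of $K_6$. So assume henceforth that $G$ is non-complete. By Proposition~\ref{prop:ElemPropertiesOfDC}~(ii) we have $\delta(G) \ge 7$, and together with the hypothesis $\delta(G) \le 7$ this forces $\delta(G) = 7$; fix a vertex $x$ with $\deg(x, G) = 7$, so that the neighbourhood graph $G_x$ has exactly seven vertices.

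Next I would pin down $G_x$ exactly. By Proposition~\ref{prop:structureOfGx}~(ii), $\overline{G_x}$ is a disjoint union of isolated vertices together with at least one cycle, every cycle having length at least $5$; since $n(G_x) = 7 < 10$, there is precisely one cycle, of some length $j \in \{5,6,7\}$, and the remaining $7-j$ vertices of $\overline{G_x}$ are isolated. I would rule out $j \in \{5,6\}$ via Proposition~\ref{prop:ElemPropertiesOfDC}~(i): since $x$ is adjacent to every vertex of $N(x)$, a $K_4$ in $G_x$ would yield a $K_5$ in $G$, so $G_x$ is $K_4$-free, i.e.\ $\alpha(\overline{G_x}) \le 3$. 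But $\alpha\bigl(C_j \cup (7-j)K_1\bigr) = \lfloor j/2 \rfloor + (7-j)$, which equals $4$ when $j = 5$ and when $j = 6$, a contradiction in both cases. Hence $j = 7$, i.e.\ $\overline{G_x} = C_7$ and $G_x \simeq \overline{C_7}$.

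Finally I would obtain a contradiction from the chromatic number. The graph $\overline{C_7}$ has $7$ vertices and independence number $\omega(C_7) = 2$, so $\chi(G_x) = \chi(\overline{C_7}) \ge \lceil 7/2 \rceil = 4 > 3 = k - 3$. By Proposition~\ref{prop:kMinusTwoColourableNeighbourGraphGx}~(i), $x$ must therefore be joined to every other vertex of $G$, whence $n(G) = \deg(x, G) + 1 = 8$ and $G = K_1 + \overline{C_7}$; but then each of the seven vertices of the copy of $\overline{C_7}$ has degree $4 + 1 = 5$ in $G$, contradicting $\delta(G) = 7$. This shows that no non-complete double-critical $6$-chromatic graph has minimum degree at most $7$, so $K_6$ is the only double-critical $6$-chromatic graph with minimum degree at most $7$, and the observation follows. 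I do not anticipate a genuine obstacle: all the structural heavy lifting is already done by Propositions~\ref{prop:ElemPropertiesOfDC} and~\ref{prop:structureOfGx}, and the only point needing care is the short case analysis excluding the three candidate graphs for $\overline{G_x}$, carried out above by an independence-number / chromatic-number estimate.
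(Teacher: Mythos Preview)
Your argument is correct, and it actually establishes more than the observation asks for: you show that a non-complete double-critical $6$-chromatic graph with $\delta(G)=7$ cannot exist at all. Each step checks out---Proposition~\ref{prop:structureOfGx}~(ii) forces $\overline{G_x}$ to be $C_5\cup 2K_1$, $C_6\cup K_1$, or $C_7$; the first two are excluded by the $K_5$-freeness coming from Proposition~\ref{prop:ElemPropertiesOfDC}~(i); and in the remaining case $\chi(\overline{C_7})=4>3$ forces $x$ to dominate $G$ via Proposition~\ref{prop:kMinusTwoColourableNeighbourGraphGx}~(i), leaving an $8$-vertex graph with vertices of degree~$5$, contradicting $\delta(G)=7$. (You could equally well finish by invoking Proposition~\ref{prop:342alkfh39485}, since $n(G)=8<12$.)

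The paper proceeds quite differently. Rather than showing the non-complete case is vacuous, it assumes such a $G$ exists, takes $\overline{G_x}\simeq C_7$ as given (quoting a corollary from~\cite{KawarabayashiPedersenToftEJC2010}), uses Proposition~\ref{prop:342alkfh39485} to locate a vertex $z$ outside $N[x]$, invokes $6$-connectivity and Menger's theorem to get six internally disjoint $(x,z)$-paths, and then explicitly exhibits a $K_6$-subdivision with branch vertices $x$, $z$, and four suitably chosen neighbours of $x$. Your route is shorter and more elementary---no connectivity, no Menger, no path-juggling---and yields a sharper conclusion; the paper's route, on the other hand, illustrates the constructive subdivision technique that is the real point of the surrounding discussion of Problem~\ref{prob:K6subdiv}.
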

\begin{proposition}[\cite{KawarabayashiPedersenToftEJC2010}] \label{prop:342alkfh39485}
  If $G$ is a non-complete double-critical $6$-chromatic graph, then
  $G$ contains at least $12$ vertices.
\end{proposition}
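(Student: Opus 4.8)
The plan is to argue by contradiction: suppose $G$ is a non-complete double-critical $6$-chromatic graph with $n(G)\leq 11$, and force $G$ onto a short explicit list of graphs, each easily excluded. Two preliminary reductions do most of the work.

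\emph{Reduction 1: $G$ has no vertex adjacent to all others.} If $u$ were such a vertex, write $G=K_1+H$ with $H=G-u$. Checking the definition of double-criticality edge by edge, $G$ double-critical $6$-chromatic forces $H$ to be connected, $5$-chromatic, and $H-a-b$ to be $3$-colourable for every edge $ab$ of $H$ --- that is, $H$ is double-critical $5$-chromatic. Since Conjecture~\ref{conj:DC} holds for $k\leq 5$, this gives $H\simeq K_5$, hence $G\simeq K_6$, contradicting $G$ non-complete. \emph{Reduction 2: $\delta(G)\geq 8$.} We know $\delta(G)\geq 7$ from Proposition~\ref{prop:ElemPropertiesOfDC}(ii); suppose $x$ has degree $7=k+1$. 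By Proposition~\ref{prop:structureOfGx}(ii) the complement $\overline{G_x}$ is a disjoint union of isolated vertices and cycles of length $\geq 5$ on $7$ vertices, so $\overline{G_x}$ is $C_7$, or $C_6$ with one extra isolated vertex, or $C_5$ with two extra isolated vertices. In the latter two cases $\overline{C_6}$ contains a triangle and $\overline{C_5}=C_5$ contains an edge, so $G[N[x]]$ contains $K_5$, contradicting Proposition~\ref{prop:ElemPropertiesOfDC}(i). In the first case $G_x\simeq\overline{C_7}$, whose independence number is $2$, so $\chi(G_x)\geq 7/2>3$, contradicting Proposition~\ref{prop:structureOfGx}(i) at the non-universal vertex $x$.

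Combining the reductions, $\delta(G)\geq 8$ and $\Delta(G)\leq n(G)-2$ force $n(G)\geq 10$, leaving only $n(G)\in\{10,11\}$. If $n(G)=10$, then $G$ is $8$-regular, so $\overline{G}$ is a perfect matching and $G\simeq K_{2,2,2,2,2}$, which has chromatic number $5$ --- contradiction. If $n(G)=11$, then every vertex of $\overline{G}$ has degree $1$ or $2$, so $\overline{G}$ is a disjoint union of paths (each on at least two vertices) and cycles; moreover $\chi(G)$ equals the clique cover number $\theta(\overline{G})$, which is additive over components and satisfies $\theta(P_m)=\theta(C_m)=\lceil m/2\rceil$ for $m\geq 4$ (and $\theta(P_m)=\lceil m/2\rceil$ for all $m$, $\theta(C_3)=1$). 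Double-criticality requires $\theta(\overline{G}-x-y)\leq 4$ for every non-adjacent pair $\{x,y\}$ of $\overline{G}$ (equivalently, every edge $xy$ of $G$); since deleting one vertex lowers $\theta$ by at most $1$ and $\theta(\overline{G})=6$, deleting $x$ and then $y$ must each lower the current $\theta$ by exactly $1$. Call a vertex \emph{reducing} in a graph if its deletion lowers $\theta$ by $1$; for a pair in distinct components this just says $x$ and $y$ are each reducing within their own component. I would now run a finite check. If $\overline{G}$ is disconnected, every cross-component pair is non-adjacent, so every vertex of every component must be reducing within that component; but a vertex of a path adjacent to an endpoint is never reducing, the triangle $C_3$ and every even cycle consist entirely of non-reducing vertices, and in an odd cycle $C_m$ with $m\geq 5$ --- whose vertices are all reducing --- deleting two non-adjacent vertices lowers $\theta(C_m)$ by only $1$, not $2$; so no component type survives and $\overline{G}$ cannot be disconnected. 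If $\overline{G}$ is connected it is $P_{11}$ or $C_{11}$, each eliminated by an explicit bad pair: in $P_{11}$, an endpoint together with the fourth vertex leaves two paths on $2$ and $7$ vertices with $\theta=5$; in $C_{11}$, two vertices at distance $2$ leave paths on $1$ and $8$ vertices with $\theta=5$. This exhausts the cases, so $n(G)\geq 12$.

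I expect the only real obstacle to be the bookkeeping in the $n(G)=11$ case: one must be sure that the reformulation ``$G$ double-critical $\Leftrightarrow$ every non-$\overline{G}$-adjacent pair consists of two vertices reducing one in the presence of the other'' is exactly correct, and that no cunning combination (for instance an odd cycle together with a short path) slips past the constraints above. The universal-vertex reduction, the degree bound, and the $n(G)=10$ case are routine once Propositions~\ref{prop:ElemPropertiesOfDC} and~\ref{prop:structureOfGx} and the $k\leq 5$ instance of Conjecture~\ref{conj:DC} are available.
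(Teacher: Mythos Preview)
The paper does not actually prove this proposition; it merely quotes it from \cite{KawarabayashiPedersenToftEJC2010}. So there is no ``paper's own proof'' to compare against, and your task was really to supply an argument from scratch.

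Your argument is correct. The two reductions are sound: in Reduction~1 the passage from $G=K_1+H$ to ``$H$ is double-critical $5$-chromatic'' uses that $\chi(G-a-b)=1+\chi(H-a-b)$ for edges $ab$ of $H$, and connectedness of $H$ follows since any second component would either contain an edge $ab$ with $G-a-b\supseteq K_1+H_1$ still $6$-chromatic, or be a single vertex of degree~$1$ in $G$, contradicting $\delta(G)\geq 7$. Reduction~2 is a clean application of Propositions~\ref{prop:ElemPropertiesOfDC}(i) and~\ref{prop:structureOfGx}(i),(ii). The endgame via $\theta(\overline{G})$ is also fine: for the disconnected case, your observation that paths, triangles and even cycles each contain a non-reducing vertex already rules them out, and then no partition of $11$ into odd parts $\geq 5$ with at least two parts exists, so the within-cycle check is in fact redundant (though not wrong). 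The connected cases $P_{11}$ and $C_{11}$ are dispatched by your explicit bad pairs.

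Your worry about ``an odd cycle together with a short path'' is unfounded: the short path always contributes a non-reducing vertex (the second vertex of any $P_m$ with $m\geq 3$, or either vertex of $P_2$), and pairing that vertex with any vertex of another component gives a non-adjacent pair in $\overline{G}$ whose deletion drops $\theta$ by at most~$1$. So nothing slips through.
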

\begin{proof}[Proof of Observation~\ref{obs:34978kfjdghs}.]
  Let $G$ denote any double-critical $6$-chromatic graph with minimum
  degree at most $7$. If $\delta(G) \leq 6$, then, by
  Proposition~\ref{prop:ElemPropertiesOfDC}~(i), $G \simeq
  K_6$. Hence $\delta(G) = 7$. Let $x$ denote a vertex of degree $7$
  in $G$. The graph $G$ is non-complete, and so, by
  Proposition~\ref{prop:342alkfh39485}, $n(G) \geq 12$, in particular,
  $G - N[x]$ is non-empty. Let $z$ denote a vertex of
  $G-N[x]$. According to Corollary 6.1
  in~\cite{KawarabayashiPedersenToftEJC2010}, $\overline{G_x}$ is a
  $7$-cycle $C_7$ with, say, $C_7 : v_1, v_2, v_3, \ldots, v_7$. By
  Proposition~\ref{prop:doubleCriticalImpliesSixConnected}~(iii), $G$
  is $6$-connected, and so there is a collection $\mc{C} = \{ Q_1,
  Q_2, \ldots, Q_6 \}$ of six internally vertex $(x,z)$-paths in
  $G$. Choose the paths such that the sum of the lengths of the paths
  is minimum. Then each of the paths $Q_i \in \mc{C}$ contains exactly
  one vertex of $N(x)$. By the symmetry of $G_x$, we may, without loss
  of generality, assume that $V(Q_i) \cap V(G_x) = \{ v_i \}$ for each
  $i \in [6]$. Thus, in $G$, there is a $K_6$-subdivision $H$ with
  branch vertices $v_1, v_2, v_4,v_5, x$ and $z$.
\begin{figure}
\begin{center}
\scalebox{0.5}{\input{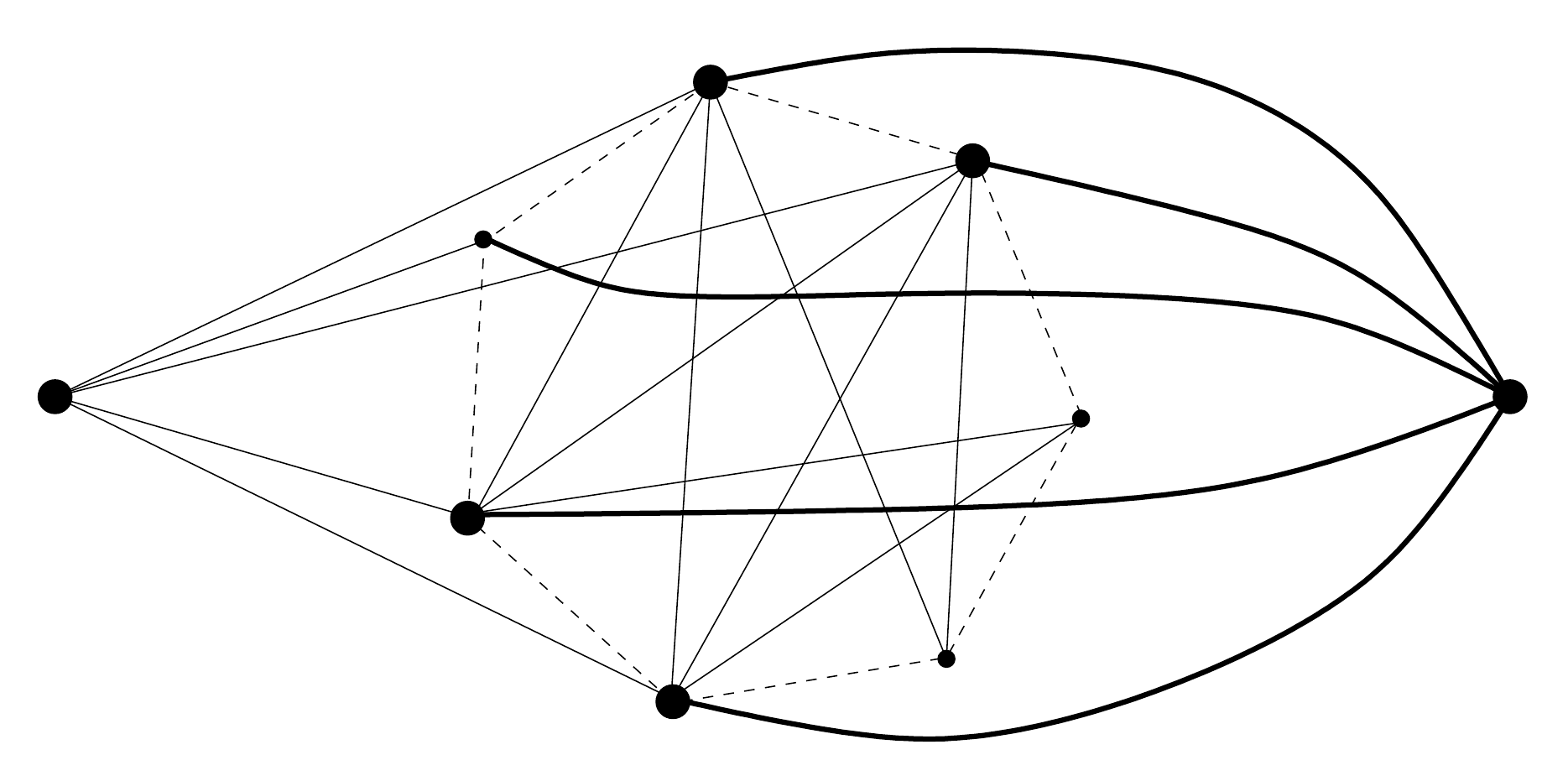_t}}
\caption{The graph $H$ of $G$ is a subdivision of $K_6$. The six
  larger dots represent the branch vertices of $H$, while the
  smaller dots represent subdividing vertices. The filled straight
  lines represent edges in $H$, while the bold curves represent the
  paths $Q_1, Q_2, Q_3,Q_4$, and $Q_5$.}
\label{fig:K6subdivision}
\end{center}
\end{figure}
The paths in $H$ connecting the branch vertices of are as indicated in
Figure~\ref{fig:K6subdivision}. Note that the $(x,z)$-path in $H$ is
the union of the $(z,v_3)$-path $Q_3$ and the $(v_3, x)$-path $( \{
v_3, x \}, \{ v_3x \})$. Thus, $G$ contains a subdivision of $K_6$.
\end{proof}
The following conjecture, known as the \emph{$(k-1,1)$ Minor
  Conjecture}, is a well-known relaxed version of Hadwiger's
Conjecture.
\begin{conjecture}[Chartrand, Geller \& Hedetniemi~\cite{MR0285427};
  Woodall~\cite{MR1210067}] Every $k$-chromatic graph has either a
  $K_k$ minor or a $K_{\lfloor \frac{k+1}{2} \rfloor, \lceil
    \frac{k+1}{2} \rceil}$ minor.
\end{conjecture}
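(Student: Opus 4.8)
The statement is the $(k-1,1)$ Minor Conjecture, which is open in general, so a ``proof'' can realistically only be a research programme; here is how I would organise the attack. The plan is to induct on $k$ and reduce, as usual, to the case that $G$ is a $k$-vertex-critical graph, since every $k$-chromatic graph contains one and a minor of a subgraph is a minor of $G$. For such $G$ one has $\delta(G)\ge k-1$, $G$ is $2$-connected, and, classically, $G$ is $(k-1)$-edge-connected, so Menger-type routing of many (essentially internally disjoint) paths --- and then contracting each path to an edge --- is available exactly as in the proofs of Corollary~\ref{cor:minimumDegreeNine}, Lemma~\ref{lem:cubic} and Observation~\ref{obs:34978kfjdghs}. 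Put $t:=\lfloor (k+1)/2\rfloor$ and $t':=\lceil (k+1)/2\rceil$, so that $t+t'=k+1$ and $|t-t'|\le 1$; if at any point one produces a $K_k$ minor the statement holds, so one may assume $G$ has no $K_k$ minor and aim for $K_{t,t'}$.

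The first regime to dispose of is when $n(G)$ is small, say $n(G)\le 2(k-1)$, so that $\delta(G)\ge k-1\ge n(G)/2$. Here $G$ is Hamiltonian (Dirac) and in fact very richly connected, and one shows by standard dense-graph arguments --- partitioning a Hamilton cycle into $t+t'$ consecutive arcs as the branch sets and using the large minimum degree to guarantee all required cross-edges, or by appealing to known results forcing complete bipartite minors in graphs of linear minimum degree --- that $G$ already contains a $K_{t,t'}$ minor. In this regime the bipartite alternative is comfortably attainable and the real conjecture is not being tested.

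The hard regime is $n(G)>2(k-1)$: a $k$-critical graph on many vertices. Such a graph has average degree essentially $k-1$, which by the Kostochka--Thomason bound lies far below the $\Theta(k\sqrt{\log k})$ threshold that would force a $K_k$ minor, and a balanced complete bipartite minor $K_{t,t}$ has the very same density threshold; hence no edge-counting argument can work and one must genuinely exploit $\chi(G)=k$. The natural plan mirrors Section~\ref{sec:delta9contractionsToK_8}: choose a vertex $x$ of degree $k-1$, study the neighbourhood graph $G_x=G[N(x)]$, obtain inside $G_x$ (via the inductive hypothesis applied to a suitable subgraph, or via Mader-type results such as Theorem~\ref{th:Mader}) a $K_{k-2}$ minor or a $K_{t-1,t'-1}$ minor on $k-3$ of its vertices, and then enlarge it using the branch set $\{x\}$ together with paths routed through $G-N[x]$ by the edge-connectivity, in the spirit of the $\{x\},\{v\},\dots$ argument of Proposition~\ref{prop:importantProposition} and the path-contraction trick of Lemma~\ref{lem:cubic}.

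The main obstacle --- and the reason the conjecture is still open --- is that ordinary ($1$-fold) criticality gives far less control over $G_x$ than the double-criticality exploited in this paper: $\chi(G_x)\le k-1$ is automatic, but one cannot in general push it down to $k-2$ or $k-3$, so the inductive step delivers a minor of the wrong size or type (a $K_{k-2}$ minor, a slightly too small bipartite minor, or one of the exceptional icosahedron-like configurations allowed by Theorem~\ref{th:Mader}), and there is no known uniform way to upgrade it to $K_{t,t'}$ using only one extra vertex and a bounded amount of path routing. Realistically, the most one can hope to carry out in full is the next open value of $k$ (the conjecture is known for small $k$), by a finite --- though laborious --- case analysis of the possible neighbourhood graphs $G_x$ of a degree-$(k-1)$ vertex, in exactly the style of Lemma~\ref{lem:cubic} and Proposition~\ref{prop:importantProposition2}; a proof for all $k$ would require a genuinely new structural handle on critical graphs near the degree-$(k-1)$ threshold.
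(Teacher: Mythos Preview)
You have correctly identified the situation: the statement in question is a \emph{conjecture}, not a theorem, and the paper does not prove it. It is stated in the paper only as background (the $(k-1,1)$ Minor Conjecture of Chartrand--Geller--Hedetniemi and Woodall), followed immediately by the remark that Kawarabayashi and Toft settled the case $k=7$; no proof or even proof sketch is offered for general $k$. There is therefore no ``paper's own proof'' to compare your proposal against.

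Your response is the appropriate one: you recognise that the statement is open, explain why naive density arguments (Kostochka--Thomason) cannot work, and outline a plausible inductive strategy modelled on the neighbourhood-graph and path-routing techniques used elsewhere in the paper, while honestly flagging the obstruction that ordinary criticality gives far weaker control over $G_x$ than the double-criticality exploited here. That is exactly the right assessment; there is nothing to correct.
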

Kawarabayashi and Toft~\cite{MR2141662} proved that every
$7$-chromatic graph contains $K_7$ or $K_{4,4}$ as a minor -- thus,
settling the case $k=7$ of the $(k-1,1)$ Minor Conjecture. This result
has inspired the following problem.
\begin{problem}
Prove that every double-critical $8$-chromatic graph contains $K_8$ or $K_{4,5}$ as a minor.
\end{problem}
A natural generalisation of Problem~\ref{prob:Kriesell} would be to ask
for a linear function $f$ such that every double-critical $k$-chromatic
graph has a clique of order $f(k)$; if that problem is too hard it might be
worth considering the following problem.
\begin{problem}[Sergey Norin\footnote{Private
    communication to the author at Prague Midsummer Combinatorial Workshop XV,  July 27 - July 31, 2009.}]
  Prove that there a linear, strictly increasing function $f$ such that every
  double-critical $k$-chromatic graph has a complete minor of order
  $f(k)$.
\end{problem}
\section*{Acknowledgement}
I wish to thank Marco Chiarandini, Daniel Merkle, Friedrich Regen, and
Bjarne Toft for stimulating discussions on critical graphs and for
assistance in using certain computer programs, in particular, I must
thank Friedrich and Marco for developing certain computer programs for
sorting and displaying small graphs.
\section*{Appendix A}
This section contains drawings of all non-isomorphic cubic graphs
$G_i$ ($i \in [21]$) of order $10$ - the drawings are copies of
drawings found in~\cite{MR1849620}. Drawings of all non-isomorphic
cubic graphs of order at most $14$ be found in~\cite{MR1692656}.

For $i \in [19] \setminus \{7, 8, 9, 12, 17 \}$, the labelling of the
vertices of the graph $G_i$ indicates how $\overline{G_i}$ may be
contracted to $K_{7}^{-}$ or, even, $K_7$. The vertices labelled $j
\in [7]$ constitute the $j$th branch set of a $K_{7}^{-}$- or $K_7$
minor. If the branch sets only constitute a $K_{7}^{-}$ minor, then it
is because there is no edge between the branch sets of vertices
labelled $1$ and $7$, respectively.
\begin{center}
\scalebox{0.26}{\input{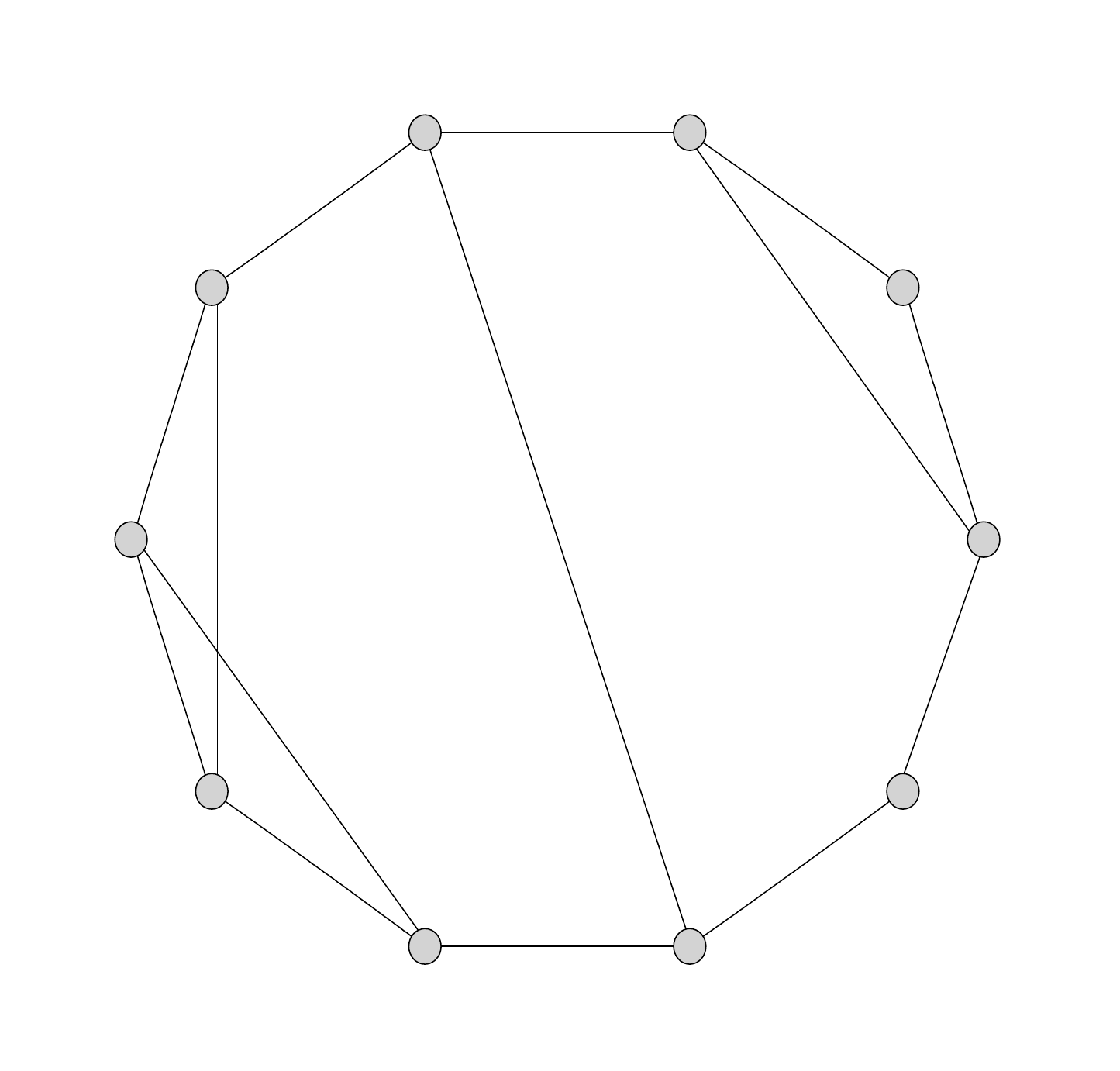_t}}
\scalebox{0.26}{\input{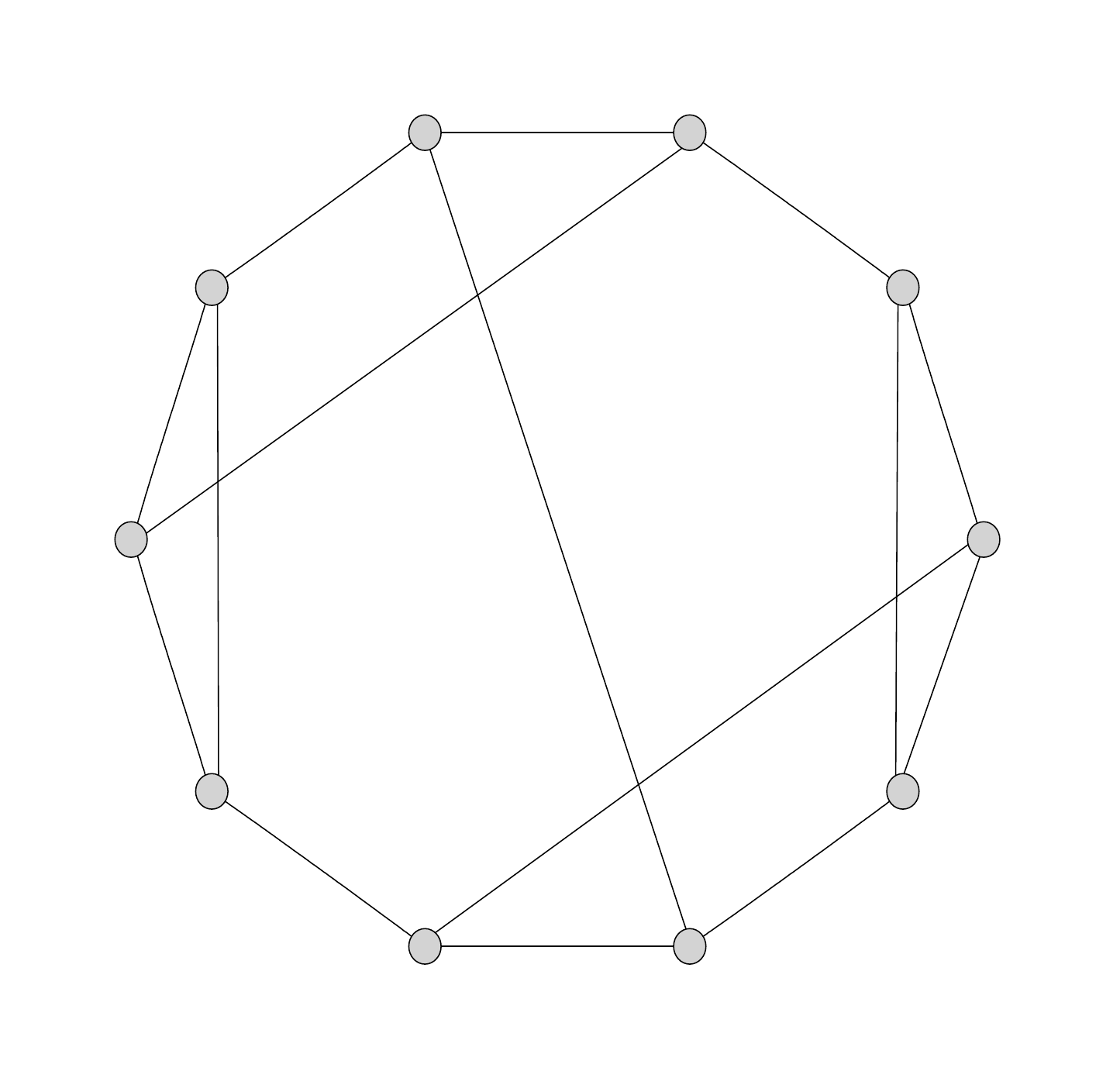_t}}
\scalebox{0.26}{\input{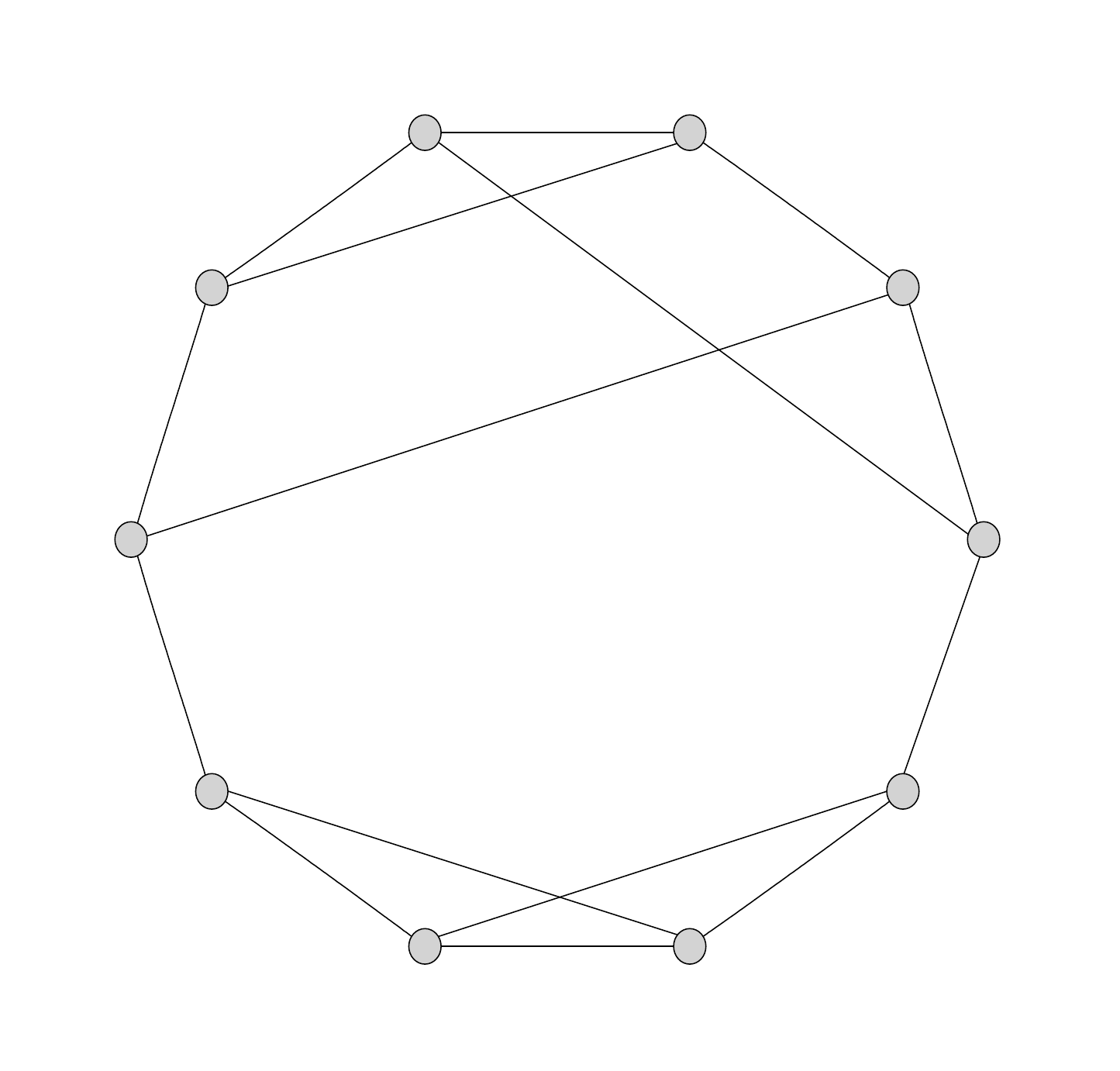_t}} \\
 \scalebox{0.26}{\input{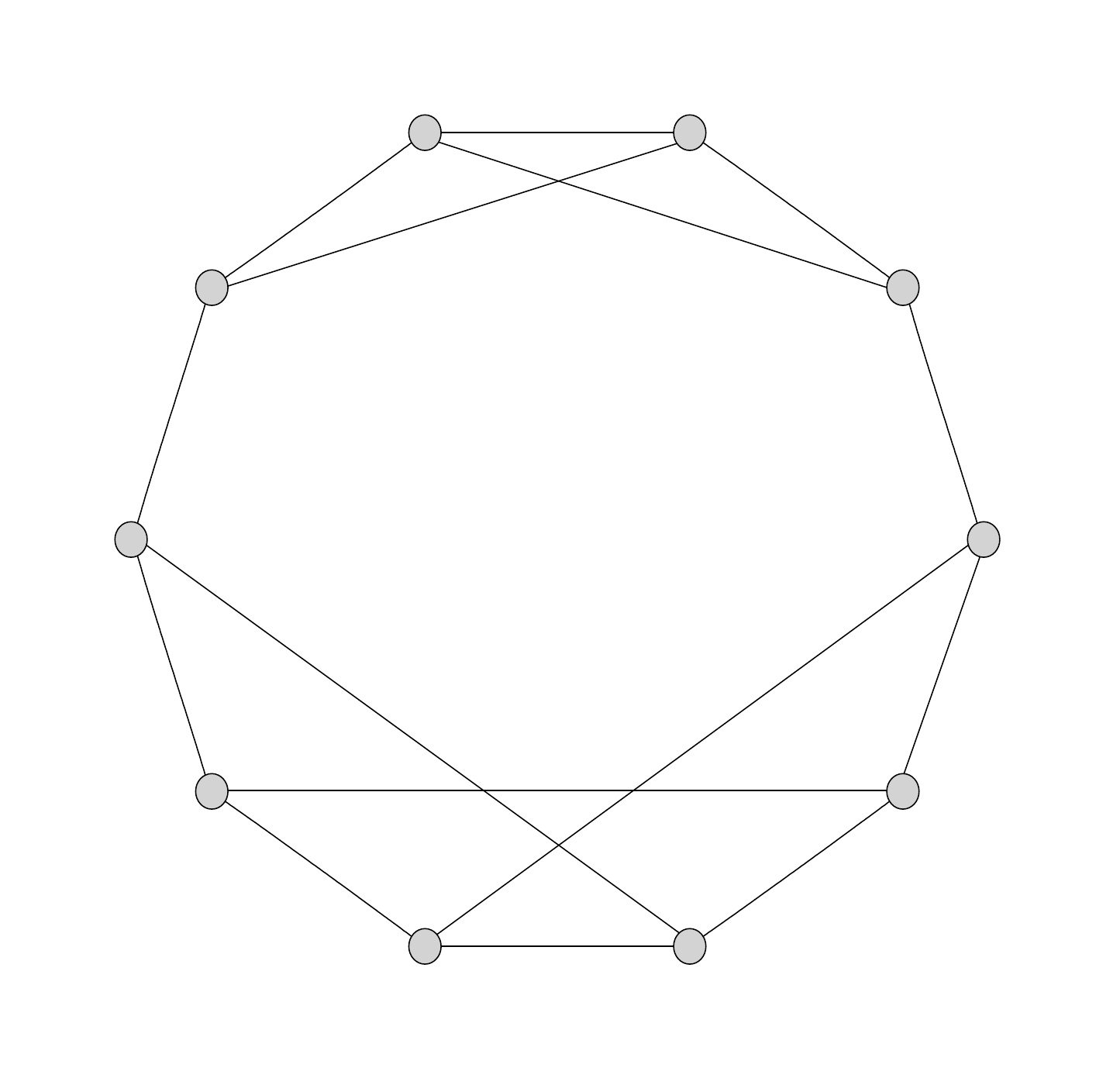_t}}
 \scalebox{0.26}{\input{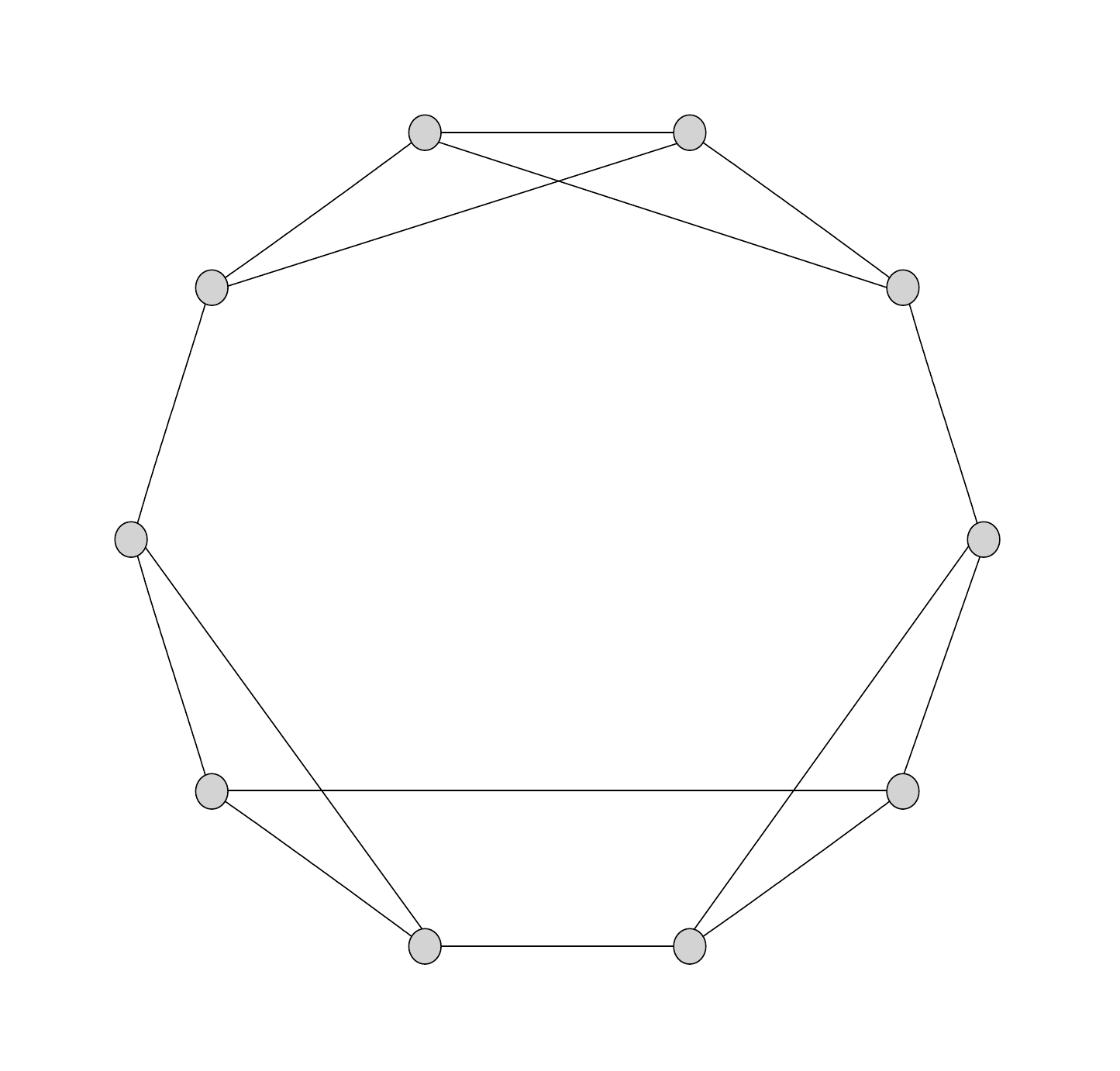_t}}
 \scalebox{0.26}{\input{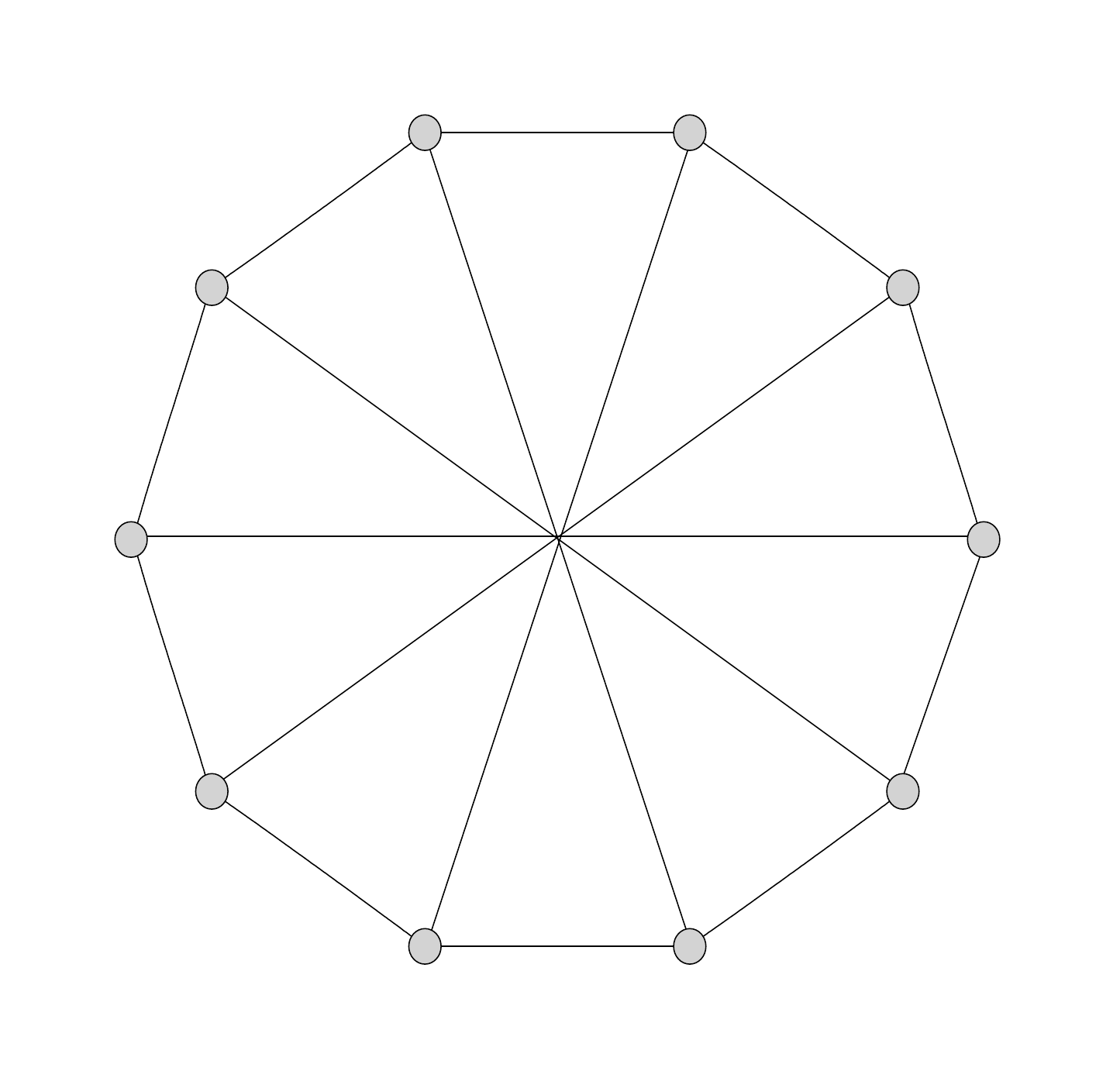_t}} \\
\scalebox{0.26}{\begin{picture}(0,0)%
\includegraphics{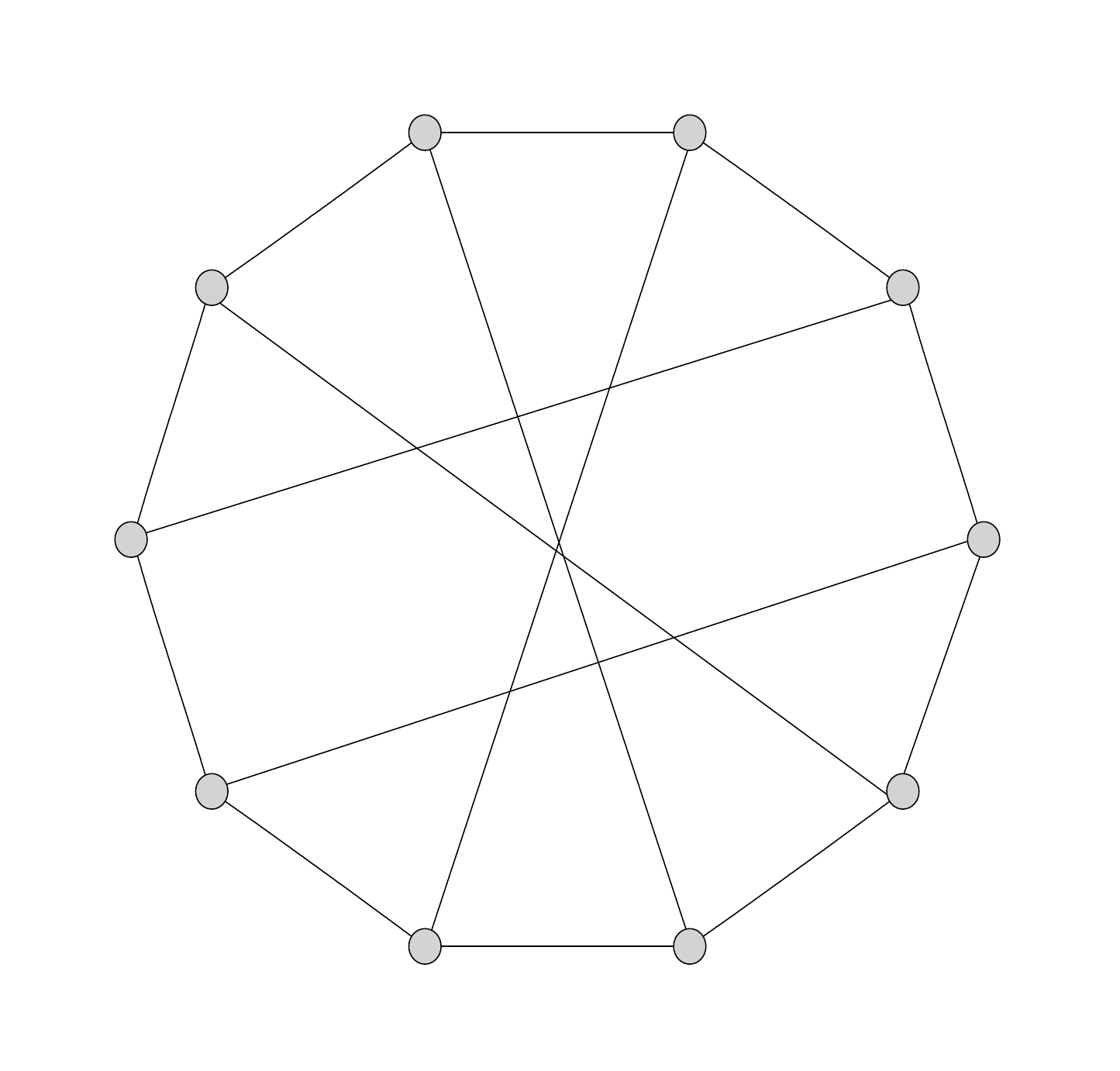}%
\end{picture}%
%
%
\setlength{\unitlength}{3947sp}%
\begingroup\makeatletter\ifx\SetFigFont\undefined%
\gdef\SetFigFont#1#2#3#4#5{%
  \reset@font\fontsize{#1}{#2pt}%
  \fontfamily{#3}\fontseries{#4}\fontshape{#5}%
  \selectfont}%
\fi\endgroup%
\begin{picture}(6924,6624)(-611,-5173)
\put(5251,-4636){\makebox(0,0)[b]{\smash{{\SetFigFont{25}{30.0}{\rmdefault}{\mddefault}{\updefault}{\color[rgb]{0,0,0}$G_7$}%
}}}}
\end{picture}%
}
 \scalebox{0.26}{\begin{picture}(0,0)%
\includegraphics{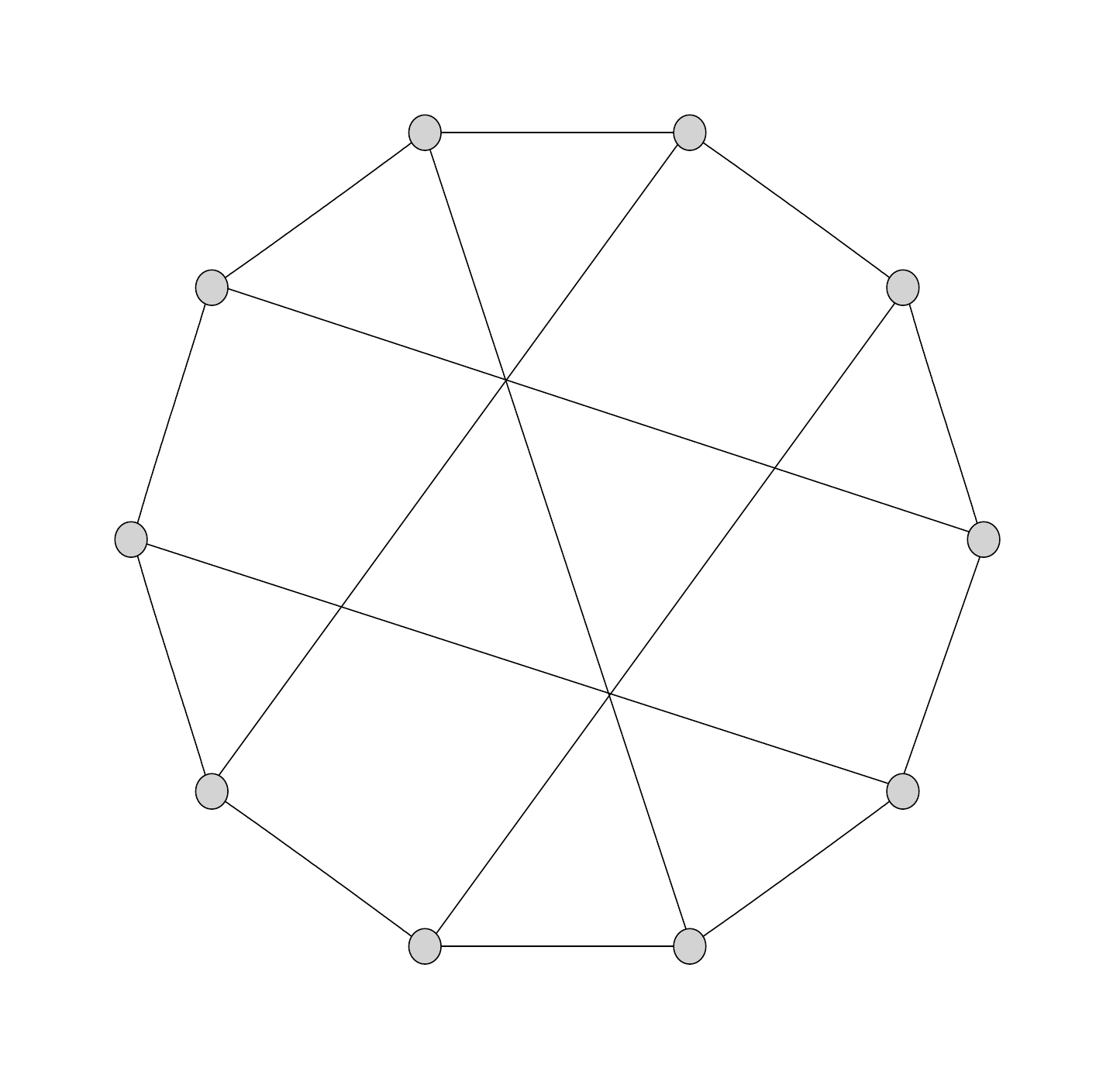}%
\end{picture}%
%
%
\setlength{\unitlength}{3947sp}%
\begingroup\makeatletter\ifx\SetFigFont\undefined%
\gdef\SetFigFont#1#2#3#4#5{%
  \reset@font\fontsize{#1}{#2pt}%
  \fontfamily{#3}\fontseries{#4}\fontshape{#5}%
  \selectfont}%
\fi\endgroup%
\begin{picture}(6924,6624)(-611,-5173)
\put(5251,-4636){\makebox(0,0)[b]{\smash{{\SetFigFont{25}{30.0}{\rmdefault}{\mddefault}{\updefault}{\color[rgb]{0,0,0}$G_8$}%
}}}}
\end{picture}%
}
 \scalebox{0.26}{\begin{picture}(0,0)%
\includegraphics{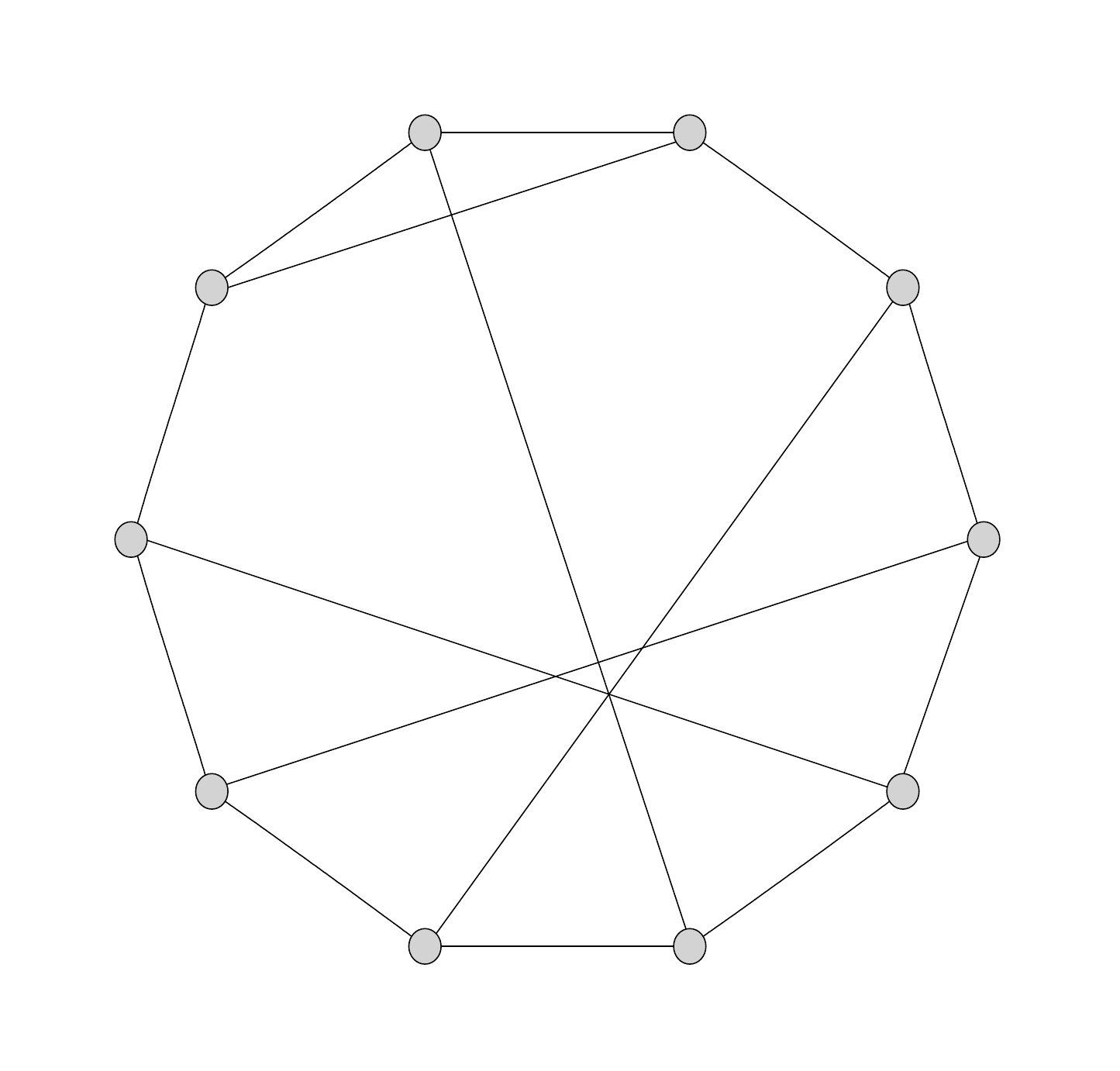}%
\end{picture}%
%
%
\setlength{\unitlength}{3947sp}%
\begingroup\makeatletter\ifx\SetFigFont\undefined%
\gdef\SetFigFont#1#2#3#4#5{%
  \reset@font\fontsize{#1}{#2pt}%
  \fontfamily{#3}\fontseries{#4}\fontshape{#5}%
  \selectfont}%
\fi\endgroup%
\begin{picture}(6924,6624)(-611,-5173)
\put(5251,-4636){\makebox(0,0)[b]{\smash{{\SetFigFont{25}{30.0}{\rmdefault}{\mddefault}{\updefault}{\color[rgb]{0,0,0}$G_9$}%
}}}}
\end{picture}%
} \\
 \scalebox{0.26}{\input{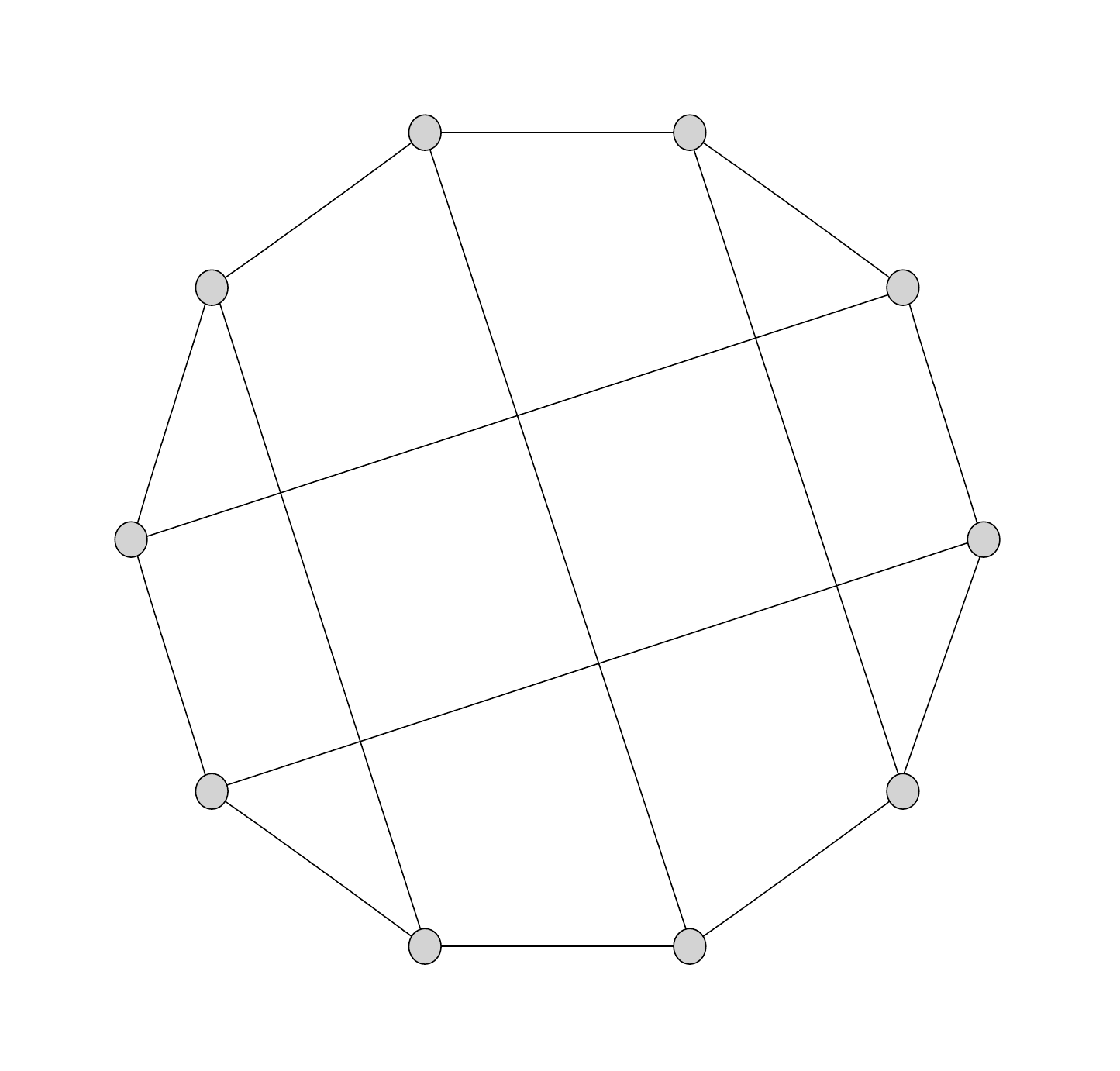_t}}
 \scalebox{0.26}{\input{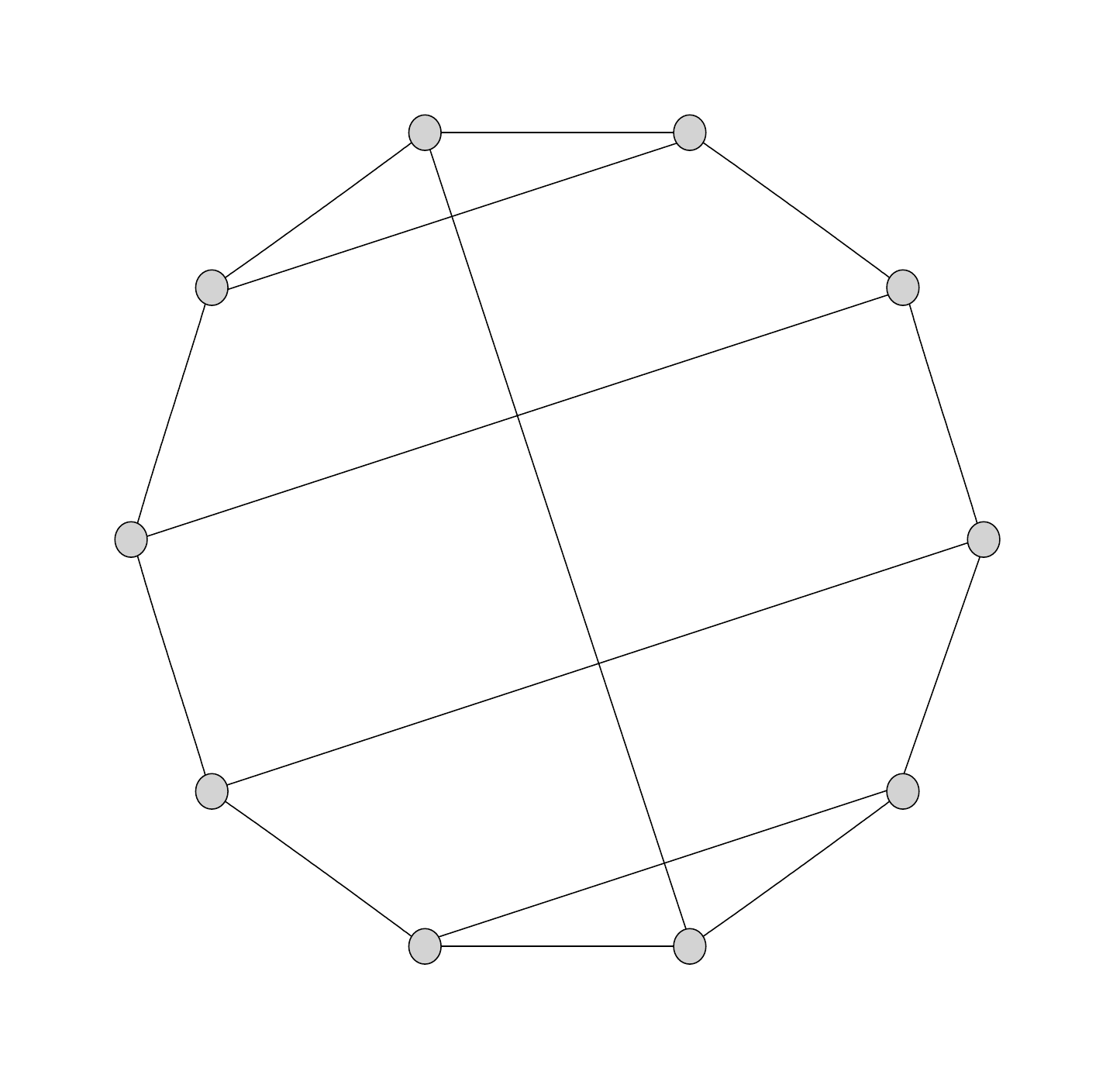_t}}
 \scalebox{0.26}{\begin{picture}(0,0)%
\includegraphics{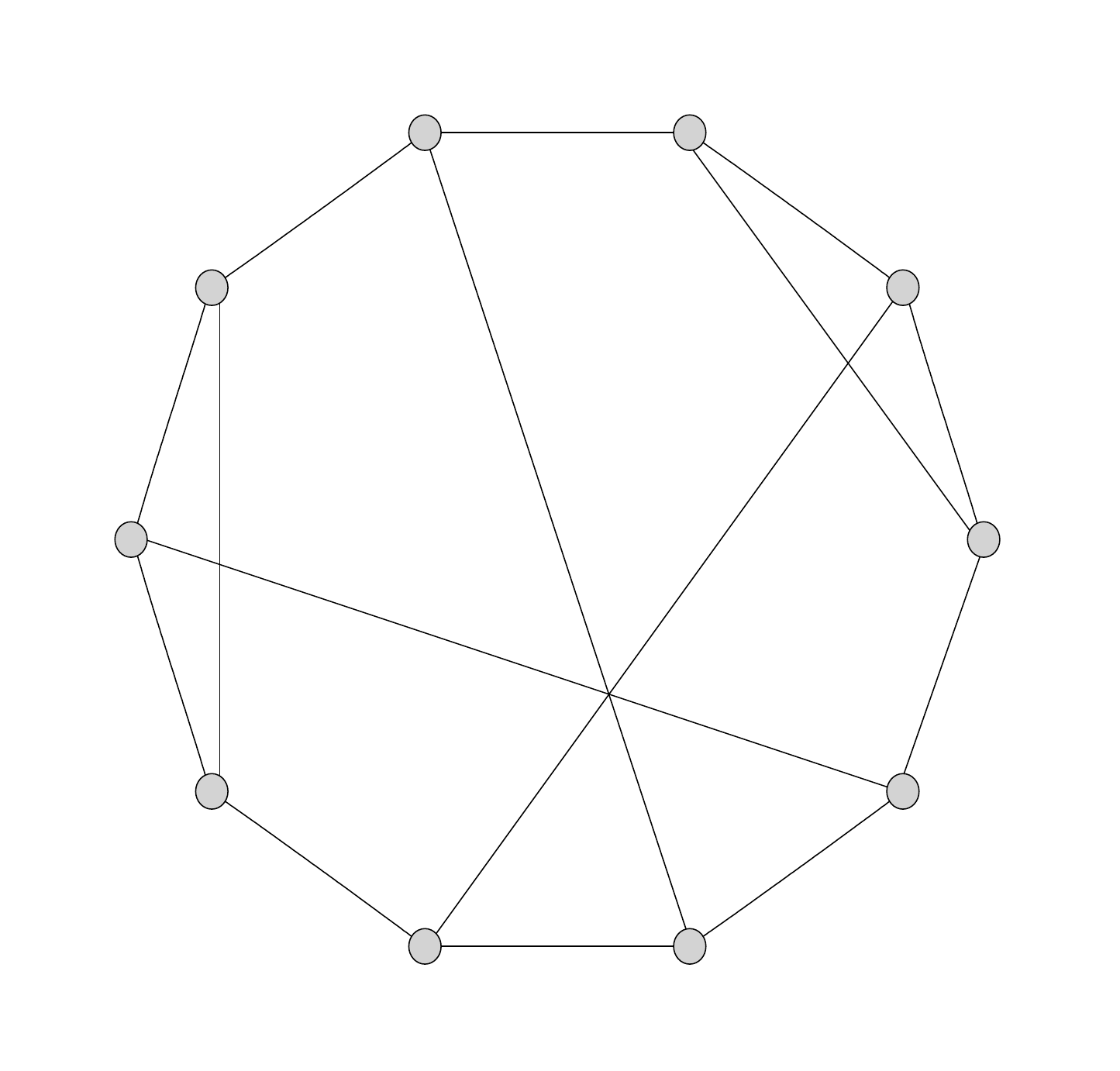}%
\end{picture}%
%
%
\setlength{\unitlength}{3947sp}%
\begingroup\makeatletter\ifx\SetFigFont\undefined%
\gdef\SetFigFont#1#2#3#4#5{%
  \reset@font\fontsize{#1}{#2pt}%
  \fontfamily{#3}\fontseries{#4}\fontshape{#5}%
  \selectfont}%
\fi\endgroup%
\begin{picture}(6924,6624)(-611,-5173)
\put(5476,-4786){\makebox(0,0)[b]{\smash{{\SetFigFont{25}{30.0}{\rmdefault}{\mddefault}{\updefault}{\color[rgb]{0,0,0}$G_{12}$}%
}}}}
\end{picture}%
} \\
 \scalebox{0.26}{\input{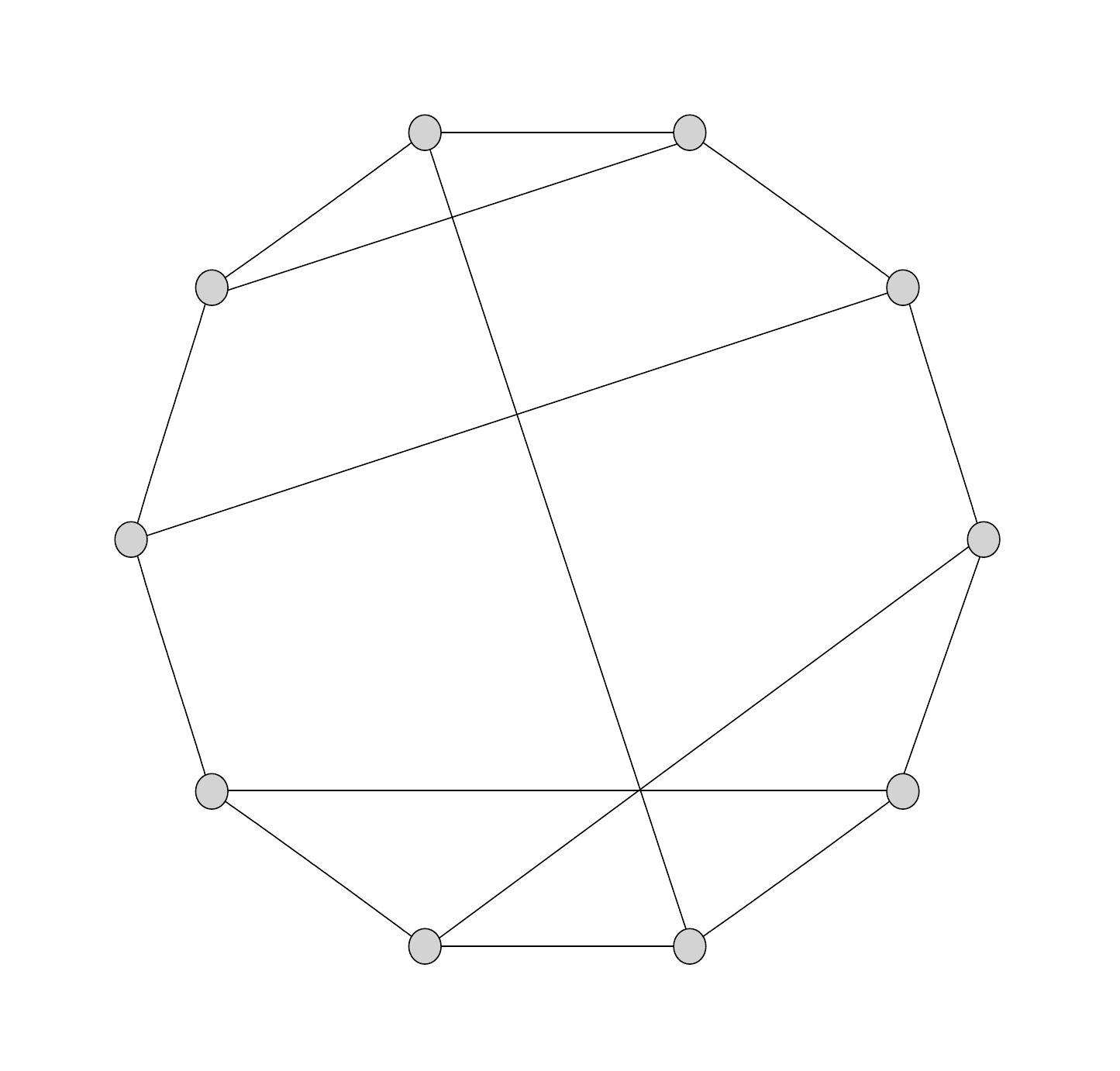_t}}
 \scalebox{0.26}{\input{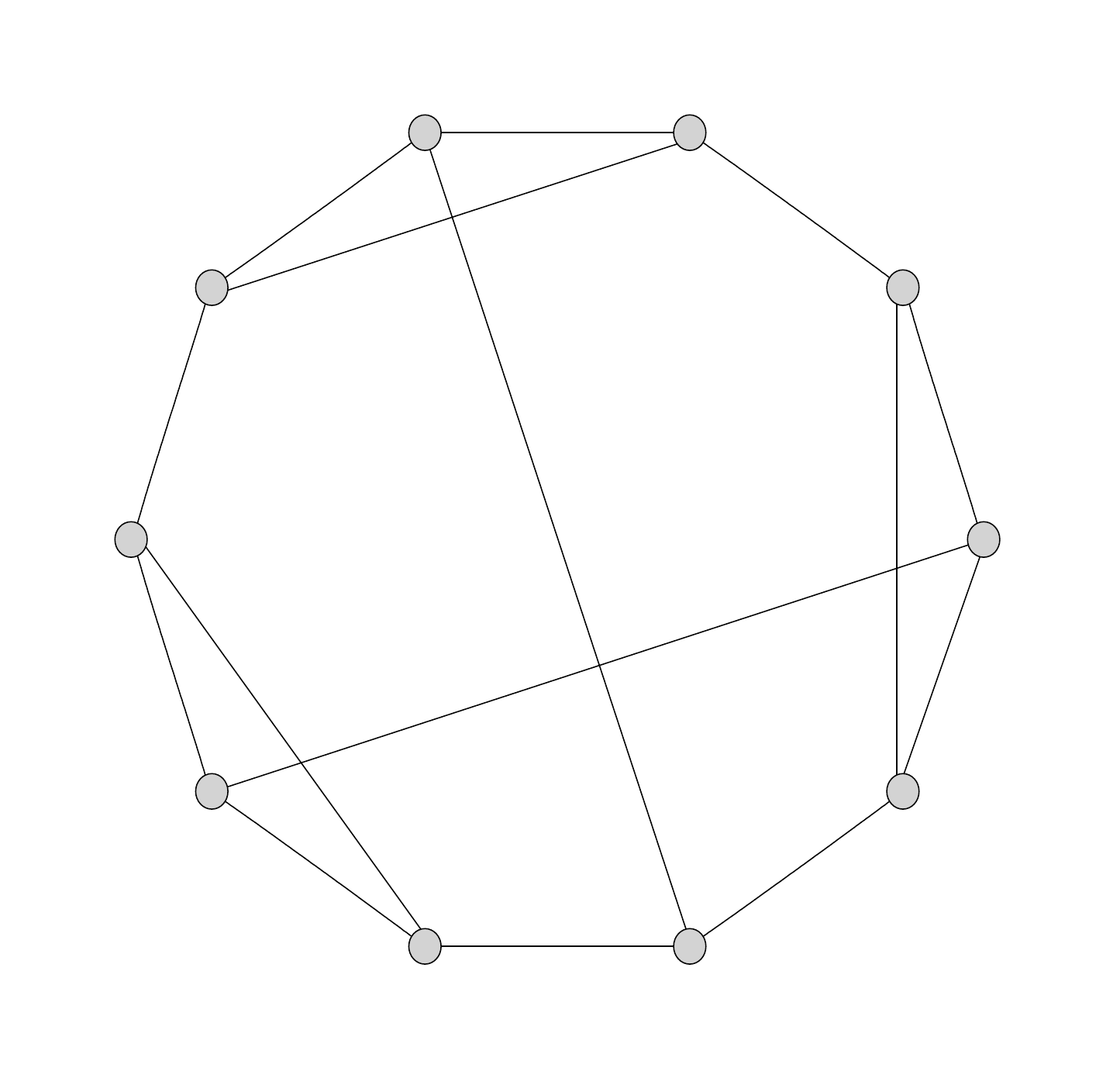_t}}
 \scalebox{0.26}{\input{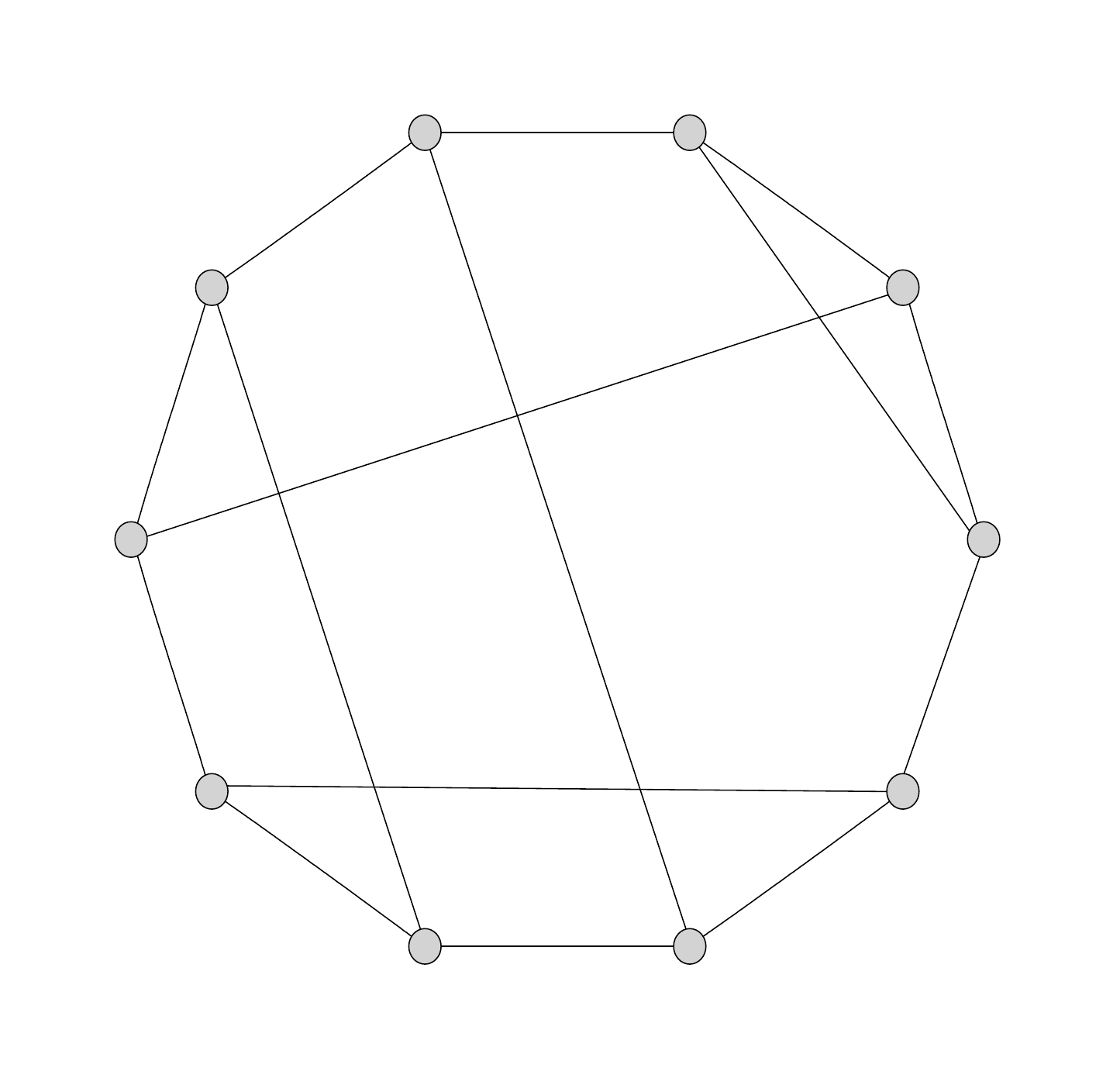_t}} \\
 \scalebox{0.26}{\input{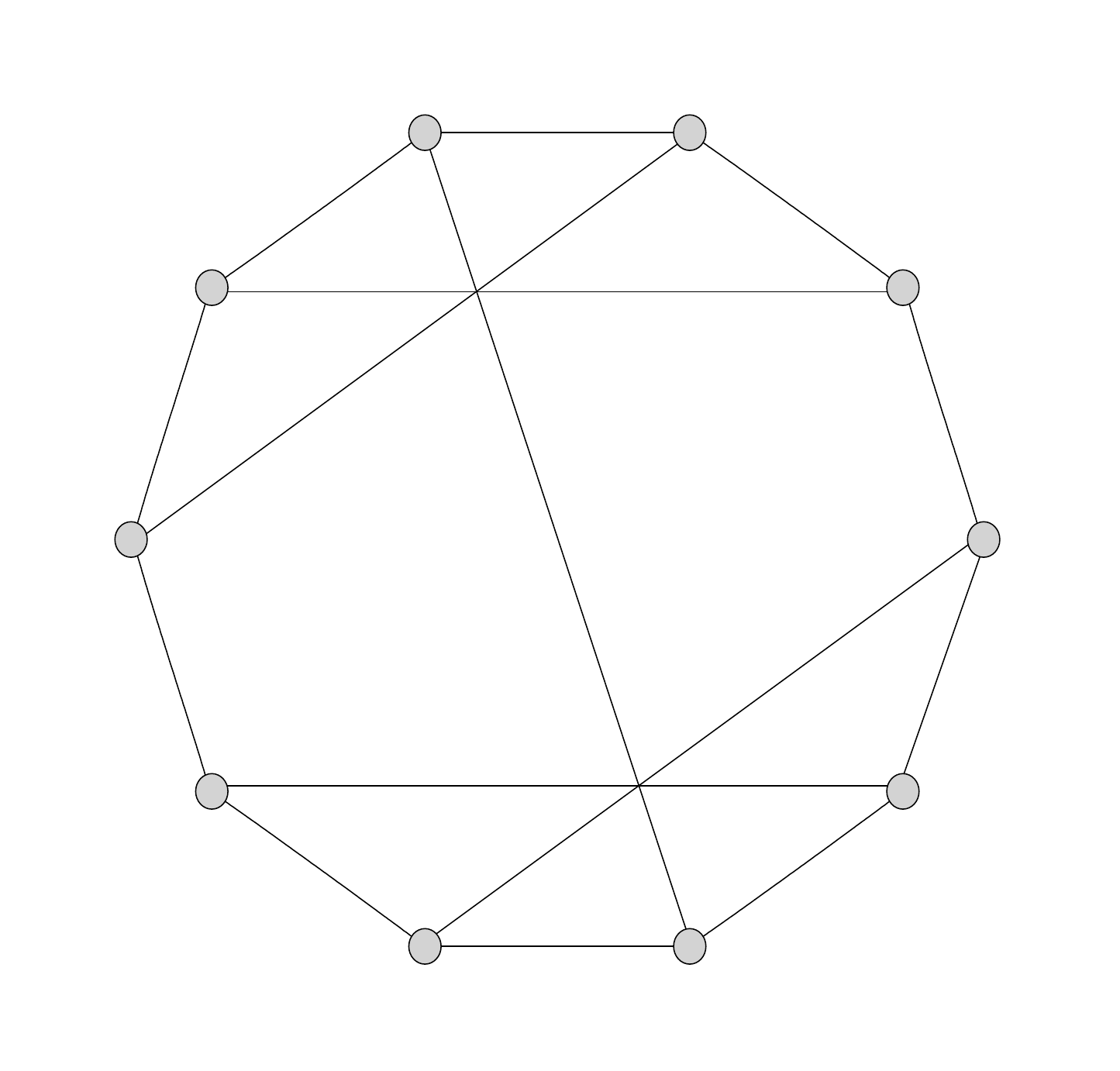_t}} 
 \scalebox{0.26}{\begin{picture}(0,0)%
\includegraphics{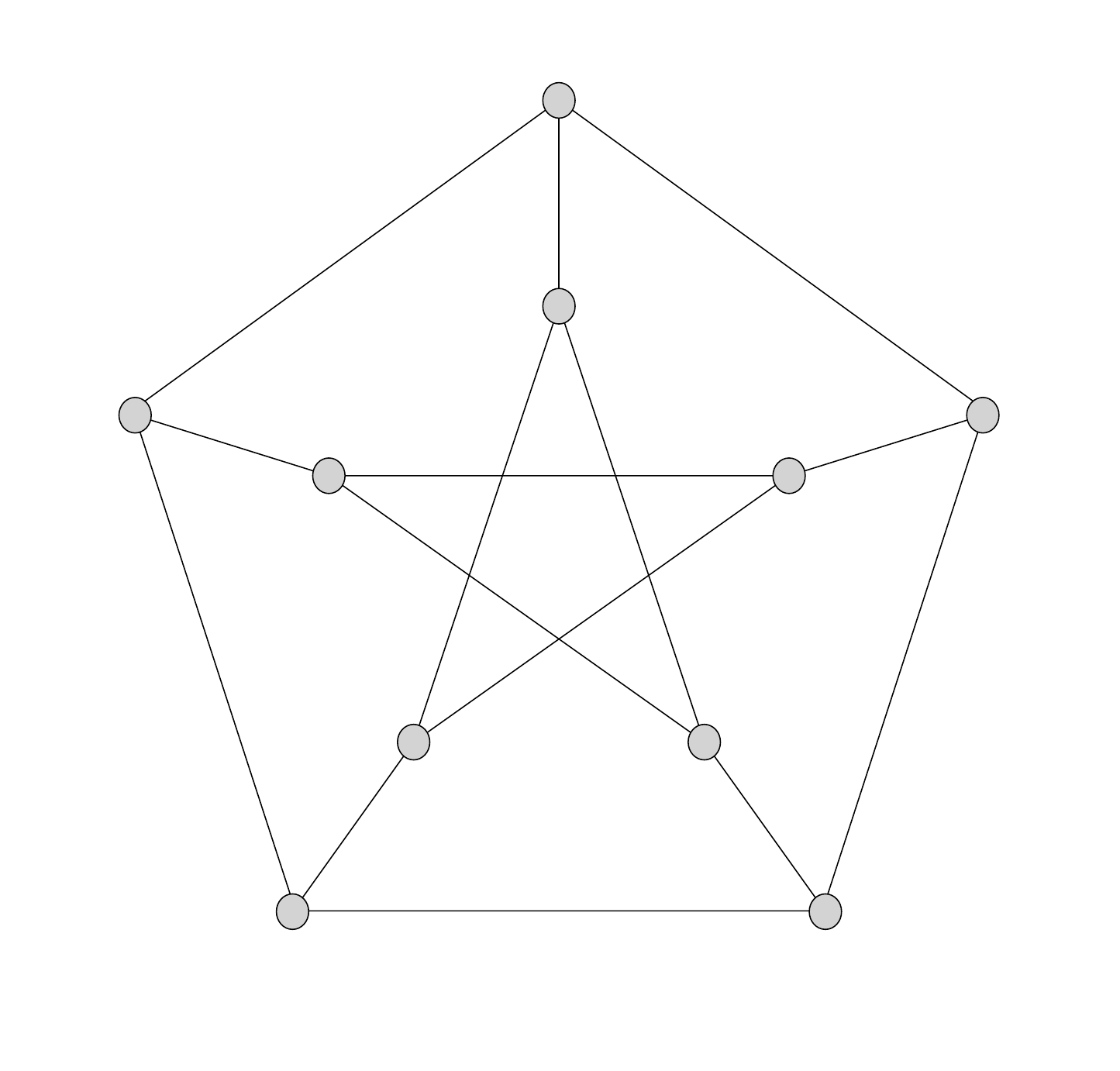}%
\end{picture}%
%
%
\setlength{\unitlength}{3947sp}%
\begingroup\makeatletter\ifx\SetFigFont\undefined%
\gdef\SetFigFont#1#2#3#4#5{%
  \reset@font\fontsize{#1}{#2pt}%
  \fontfamily{#3}\fontseries{#4}\fontshape{#5}%
  \selectfont}%
\fi\endgroup%
\begin{picture}(6924,6624)(-611,-5173)
\put(2701,-4861){\makebox(0,0)[b]{\smash{{\SetFigFont{29}{34.8}{\rmdefault}{\mddefault}{\updefault}{\color[rgb]{0,0,0}$G_{17}$}%
}}}}
\end{picture}%
}
 \scalebox{0.26}{\input{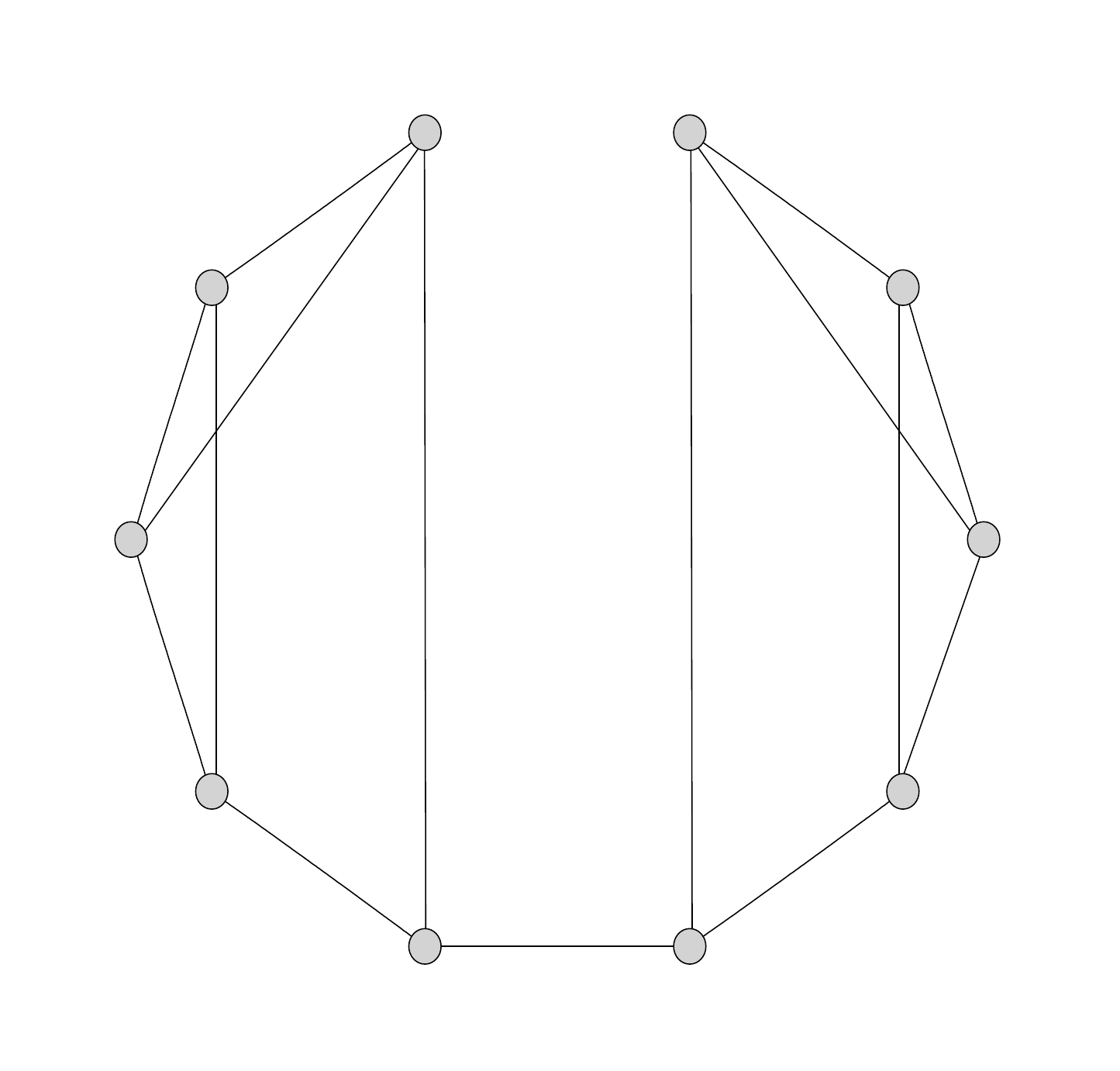_t}} \\
\scalebox{0.26}{\input{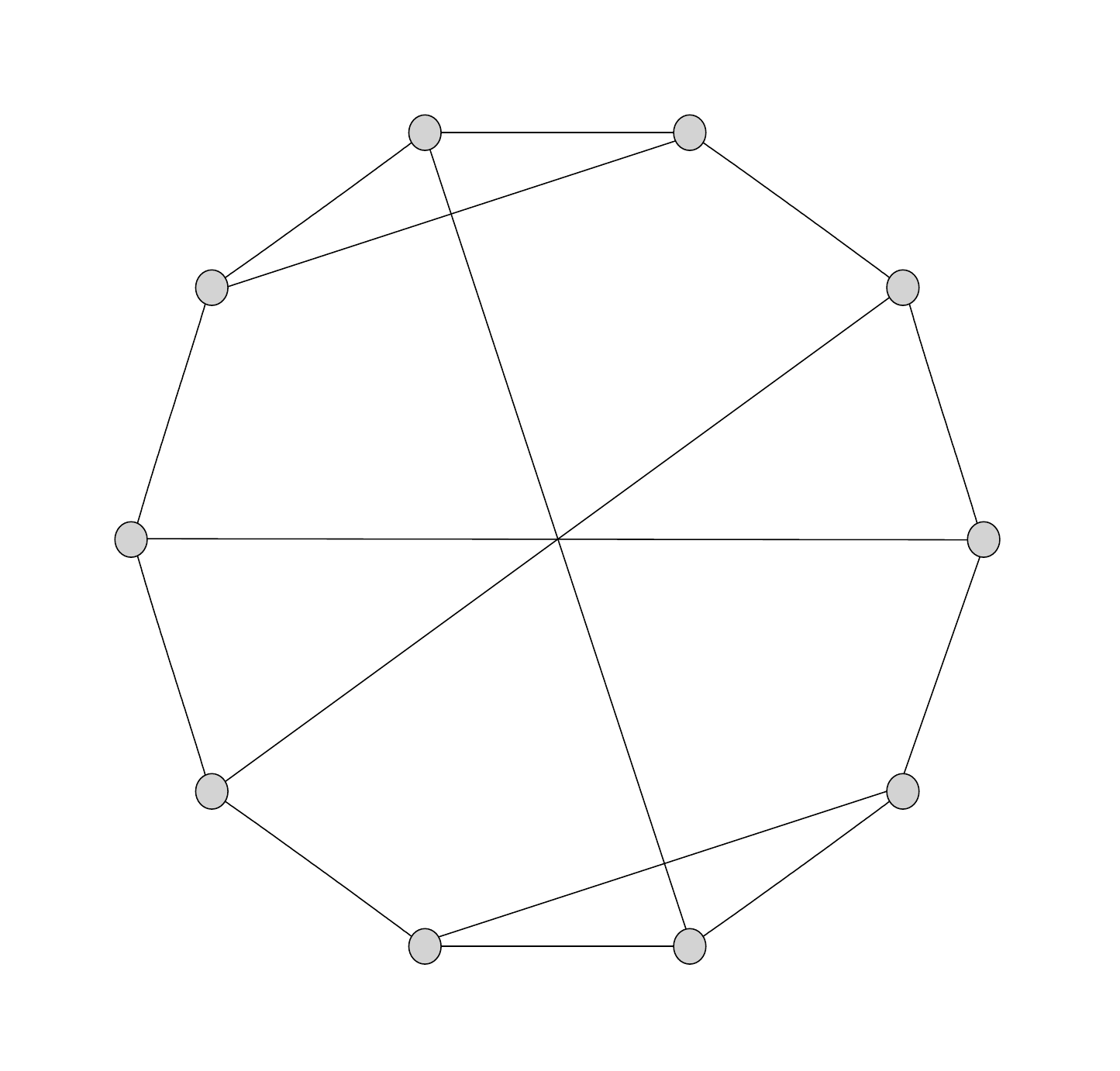_t}}
 \scalebox{0.26}{\begin{picture}(0,0)%
\includegraphics{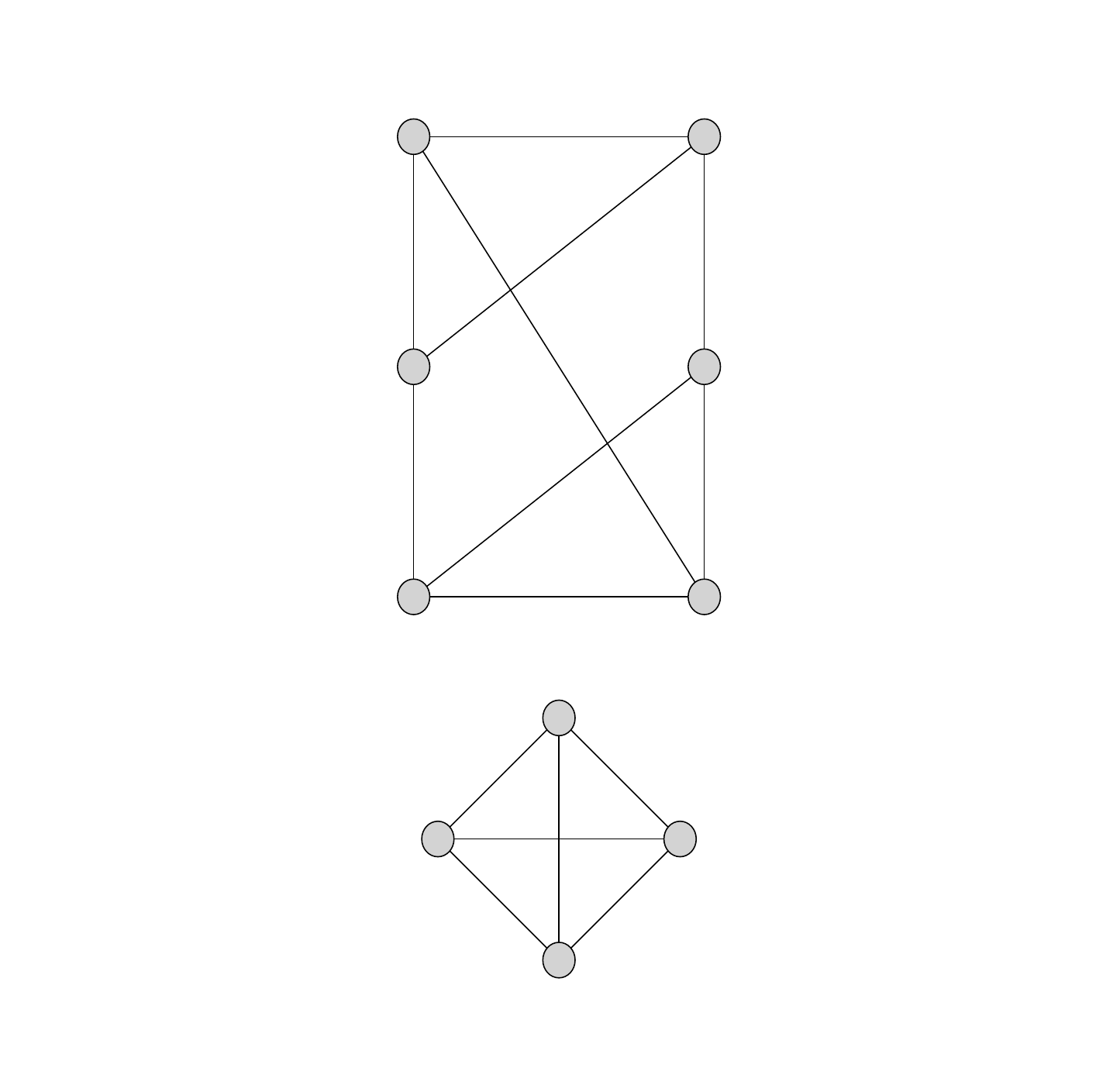}%
\end{picture}%
%
%
\setlength{\unitlength}{3947sp}%
\begingroup\makeatletter\ifx\SetFigFont\undefined%
\gdef\SetFigFont#1#2#3#4#5{%
  \reset@font\fontsize{#1}{#2pt}%
  \fontfamily{#3}\fontseries{#4}\fontshape{#5}%
  \selectfont}%
\fi\endgroup%
\begin{picture}(6924,6624)(-611,-5173)
\put(4501,-3061){\makebox(0,0)[b]{\smash{{\SetFigFont{29}{34.8}{\rmdefault}{\mddefault}{\updefault}{\color[rgb]{0,0,0}$G_{20}$}%
}}}}
\end{picture}%
}
 \scalebox{0.26}{\begin{picture}(0,0)%
\includegraphics{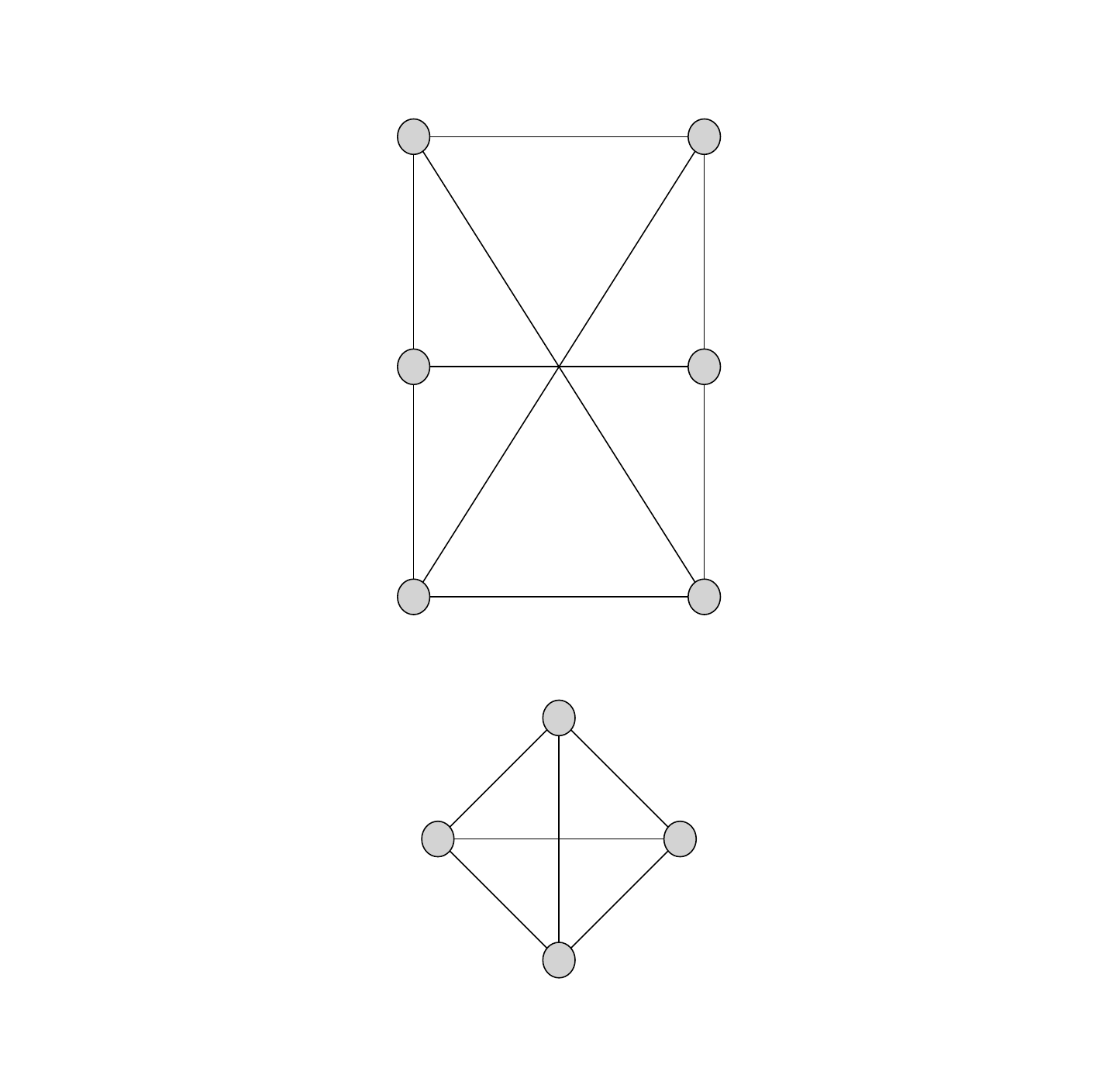}%
\end{picture}%
%
%
\setlength{\unitlength}{3947sp}%
\begingroup\makeatletter\ifx\SetFigFont\undefined%
\gdef\SetFigFont#1#2#3#4#5{%
  \reset@font\fontsize{#1}{#2pt}%
  \fontfamily{#3}\fontseries{#4}\fontshape{#5}%
  \selectfont}%
\fi\endgroup%
\begin{picture}(6924,6624)(-611,-5173)
\put(4501,-3061){\makebox(0,0)[b]{\smash{{\SetFigFont{29}{34.8}{\rmdefault}{\mddefault}{\updefault}{\color[rgb]{0,0,0}$G_{21}$}%
}}}}
\end{picture}%
} \\
\end{center}
\section*{Appendix B}
This appendix depicts $23$ graphs $J_i$ ($i \in [23]$). The vertices
of each graph $J_i$ ($i \in [23]$) are labelled with the integers $1$
to $7$ such that the vertices labelled $j \in [7]$ constitute the
$j$th branch set of a $K_{7}^{-}$- or $K_7$ minor. If the branch sets
only constitute a $K_{7}^{-}$ minor, then it is because there is no
edge between the branch sets of vertices labelled $1$ and $7$,
respectively. 
\begin{center}
\scalebox{0.26}{\input{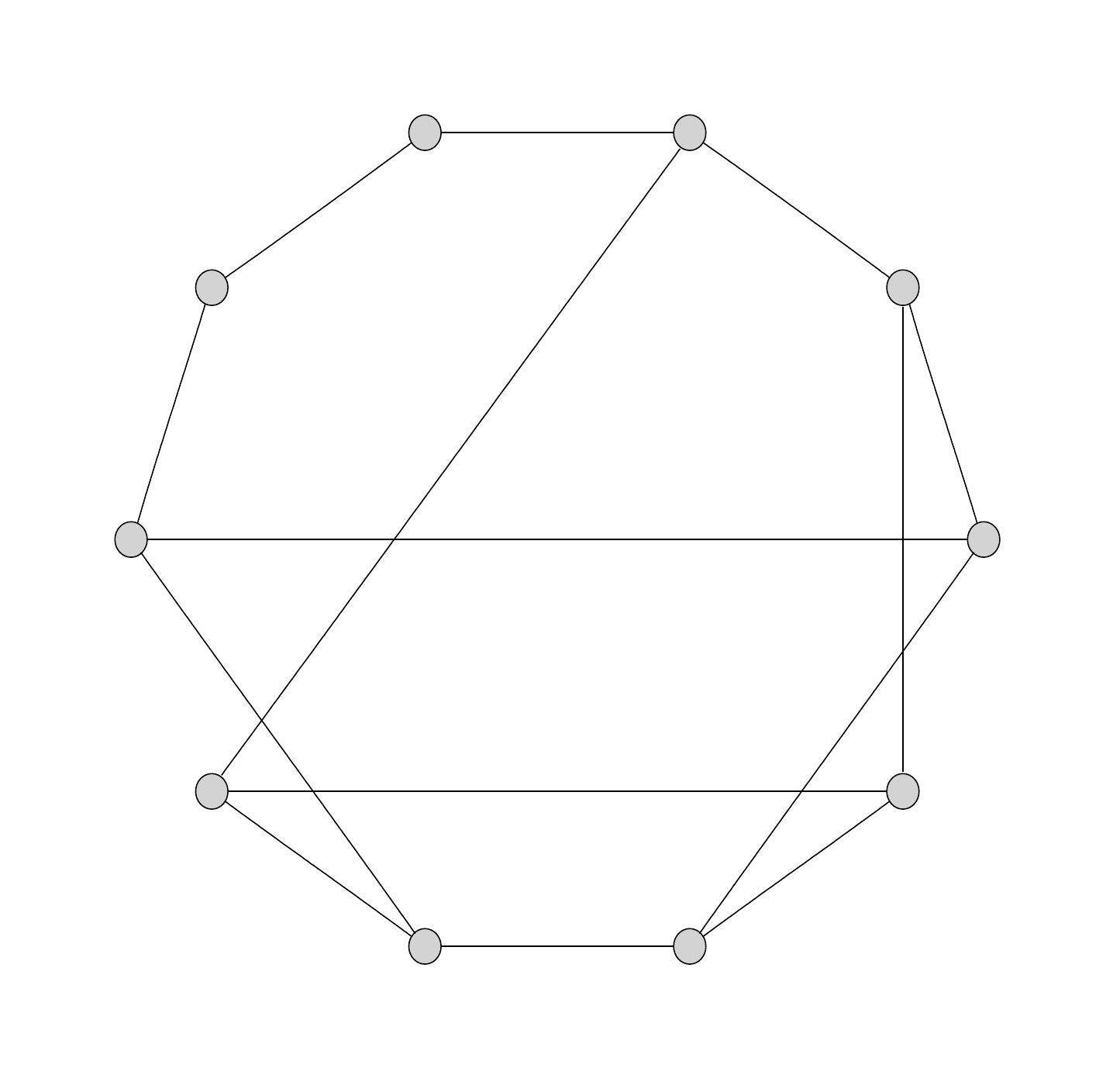_t}}
 \scalebox{0.26}{\input{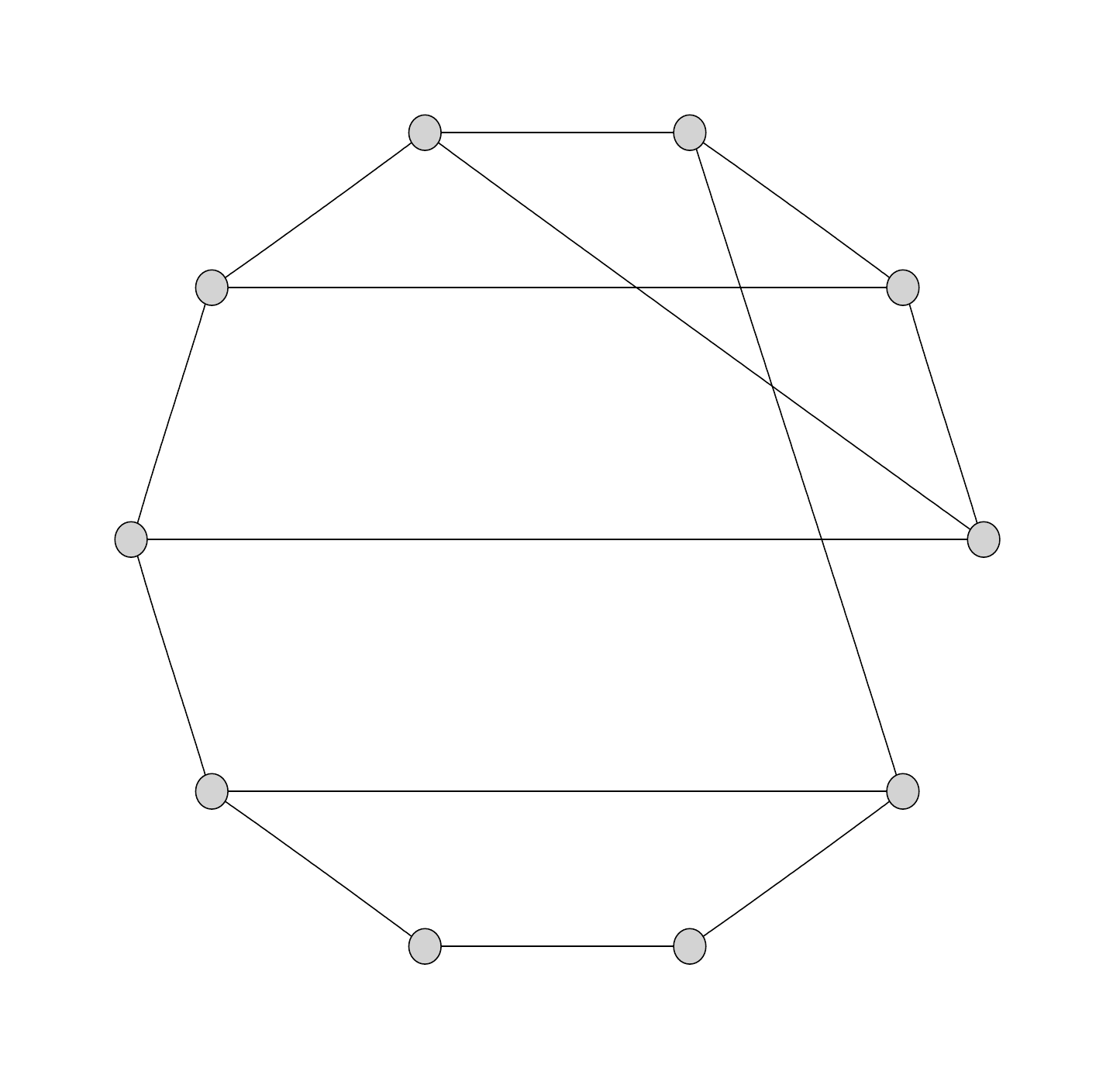_t}}
 \scalebox{0.26}{\input{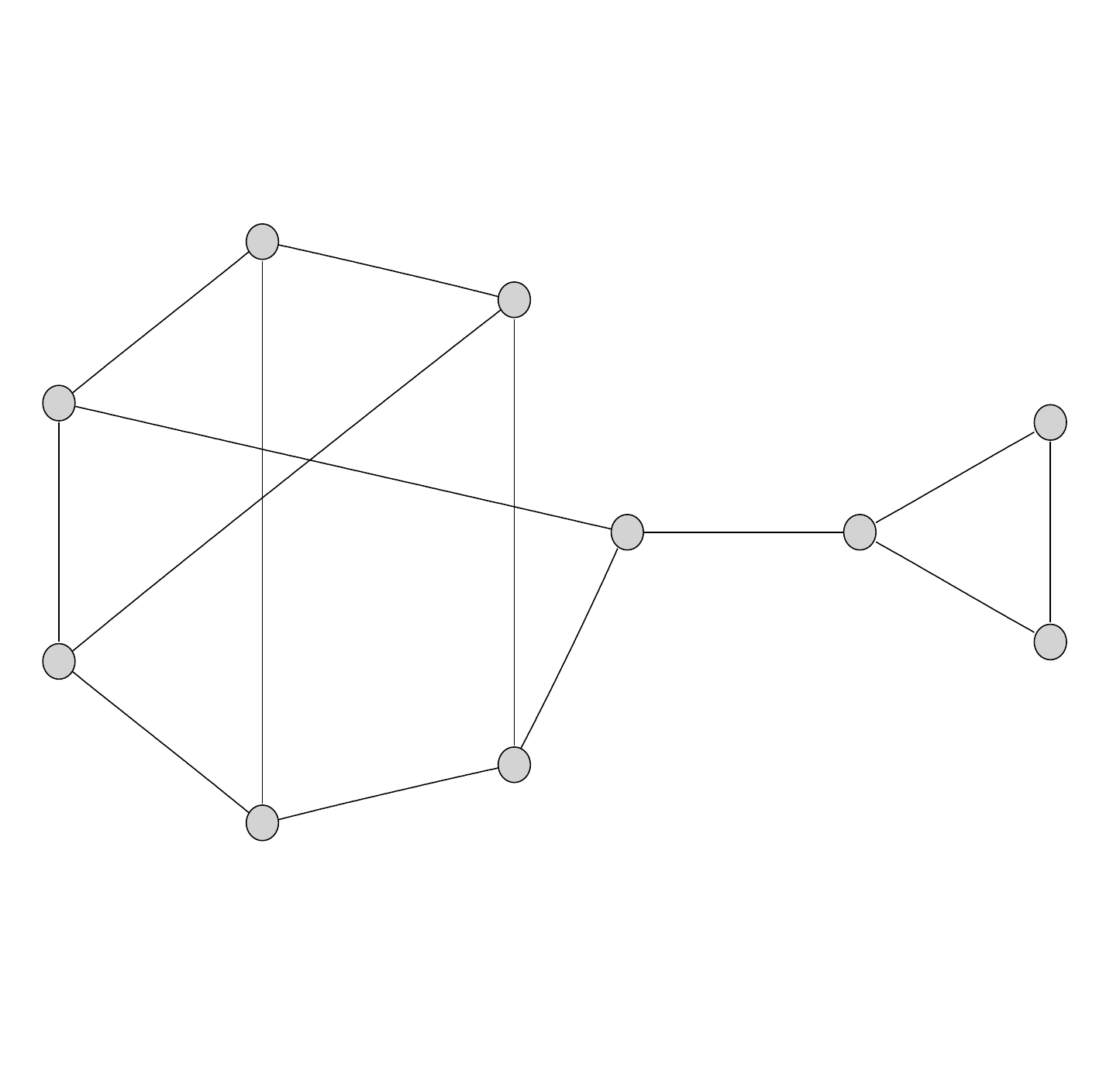_t}} \\
\scalebox{0.26}{\input{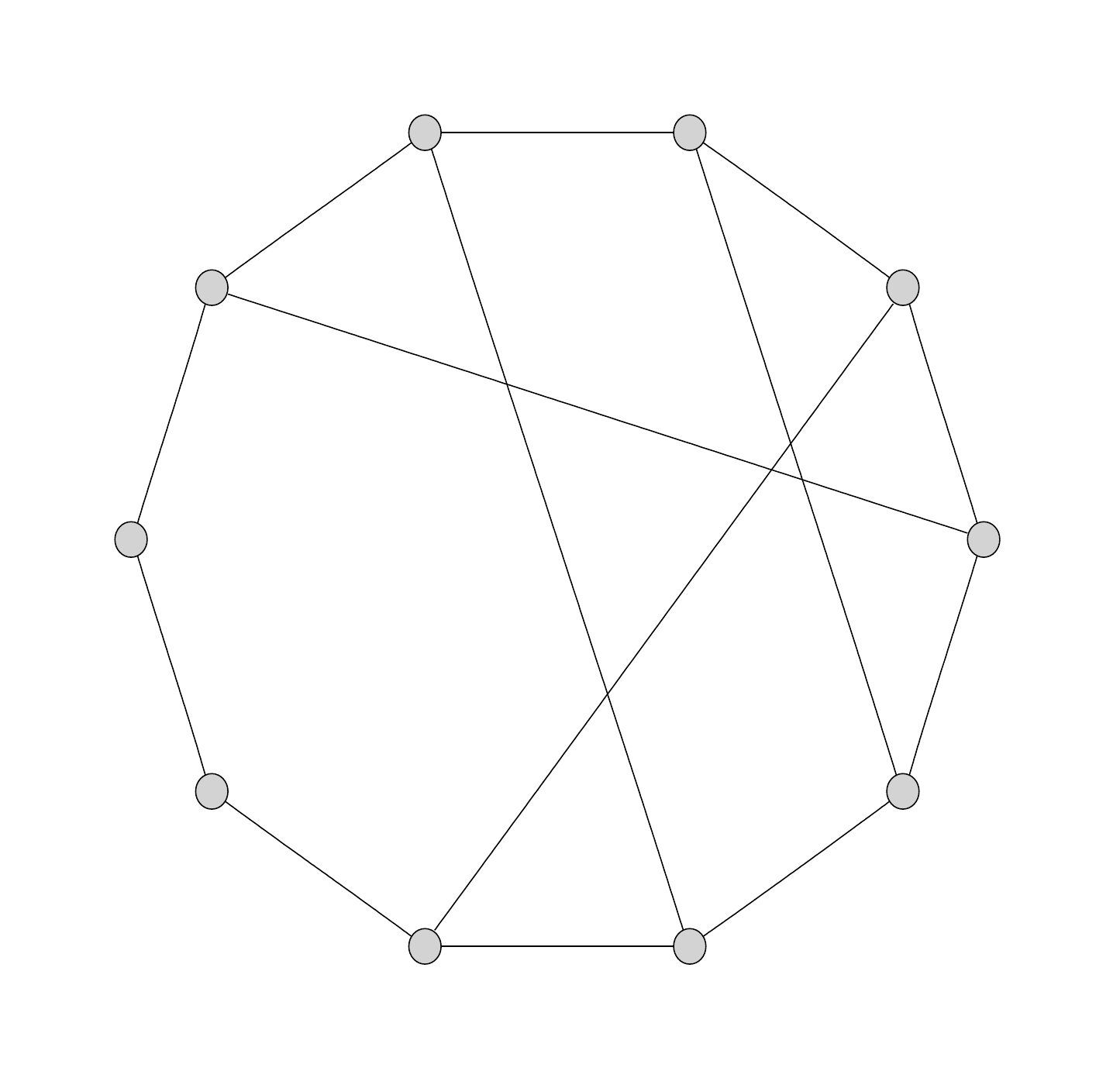_t}}
 \scalebox{0.26}{\input{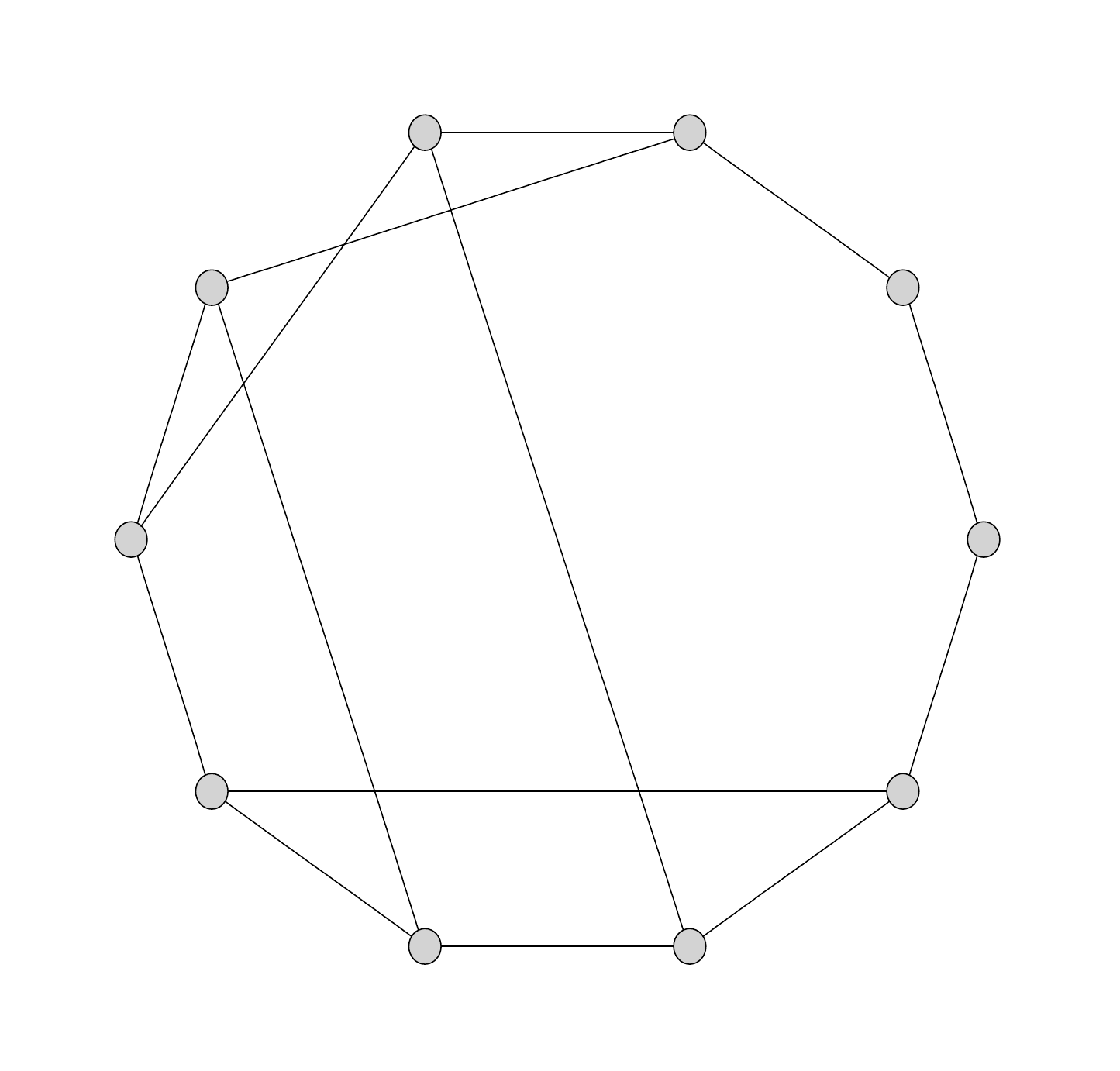_t}}
 \scalebox{0.26}{\input{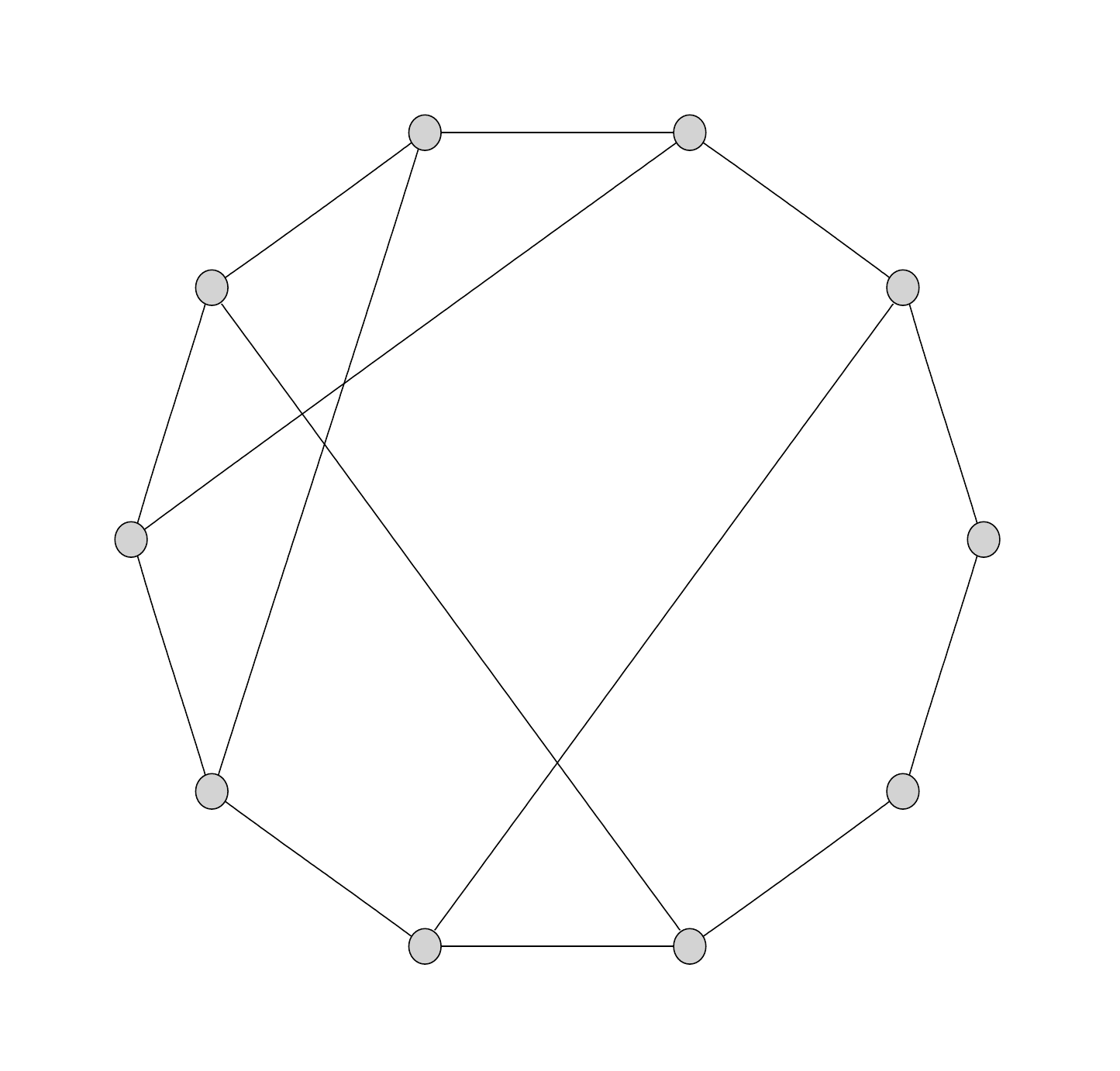_t}} \\
\scalebox{0.26}{\input{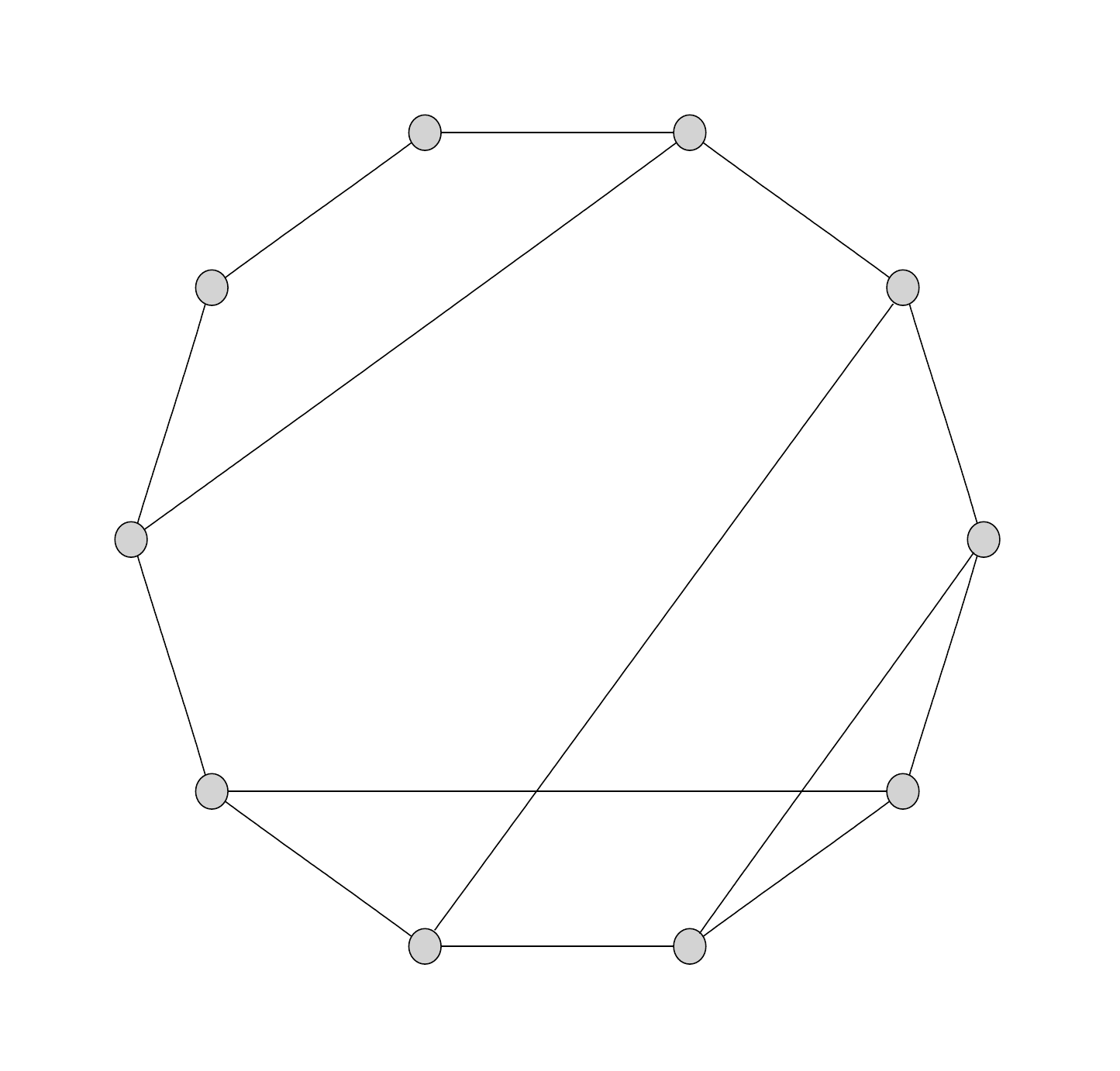_t}}
 \scalebox{0.26}{\input{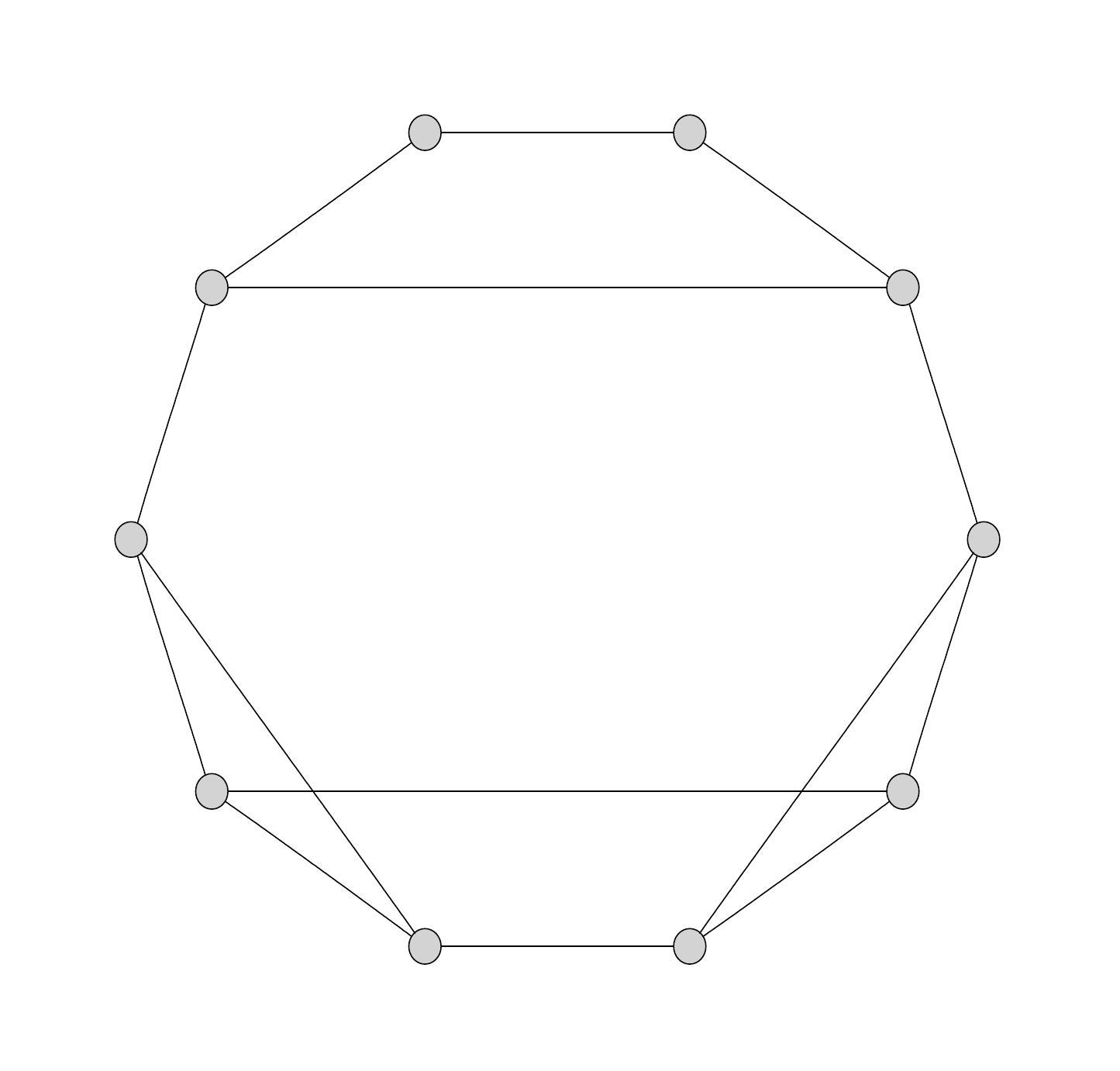_t}}
 \scalebox{0.26}{\input{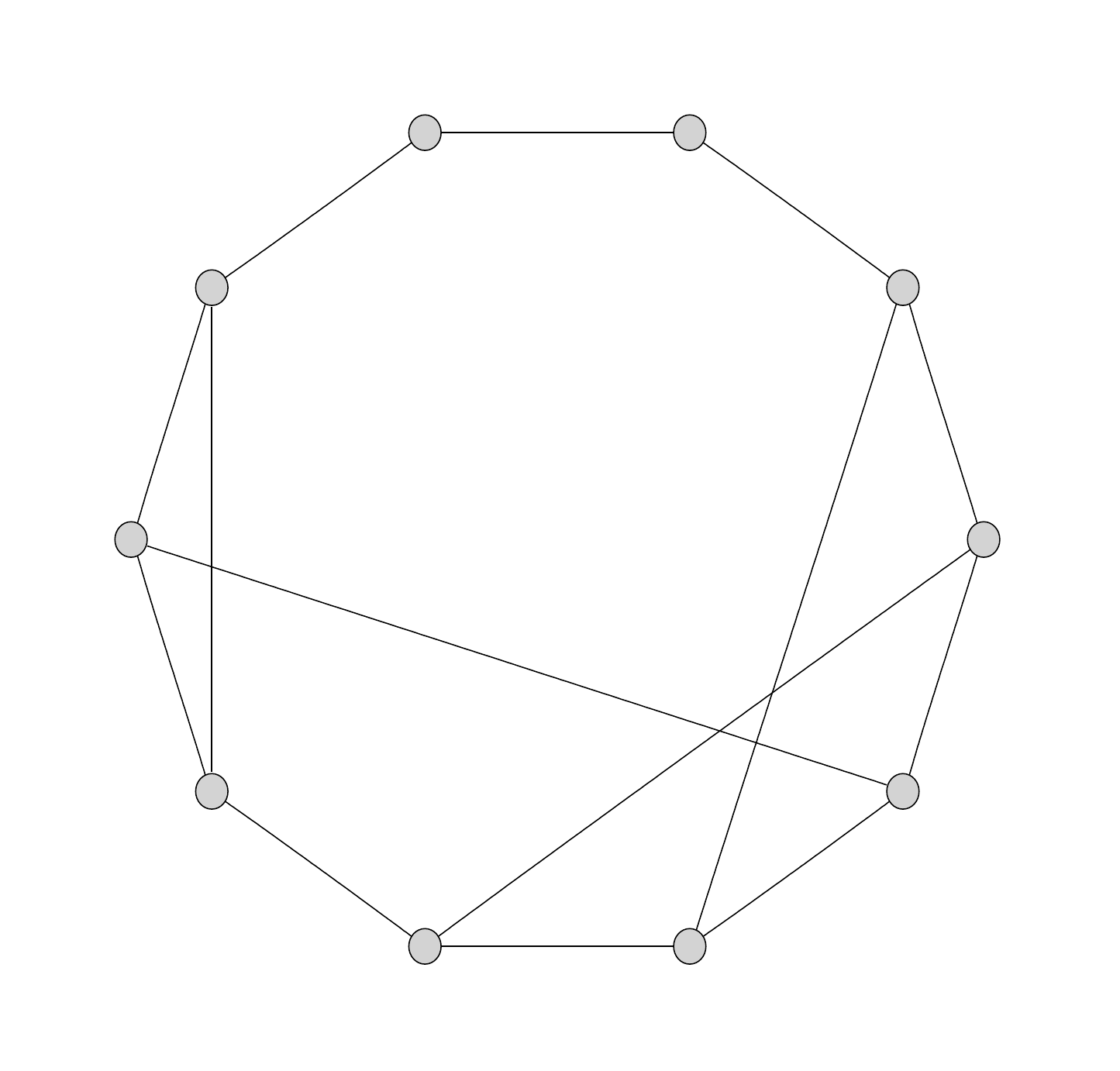_t}} \\
 \scalebox{0.26}{\input{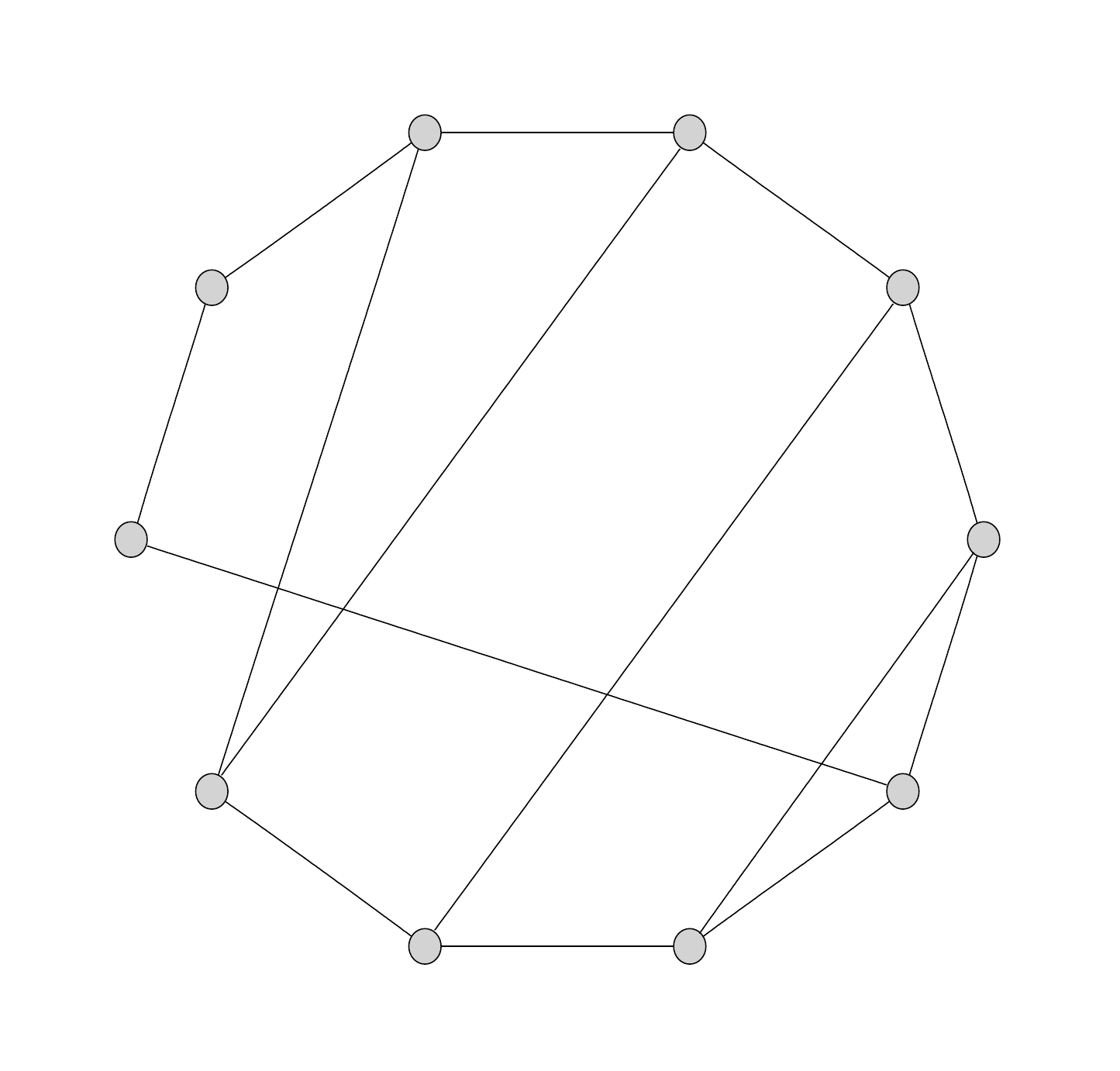_t}}
 \scalebox{0.26}{\input{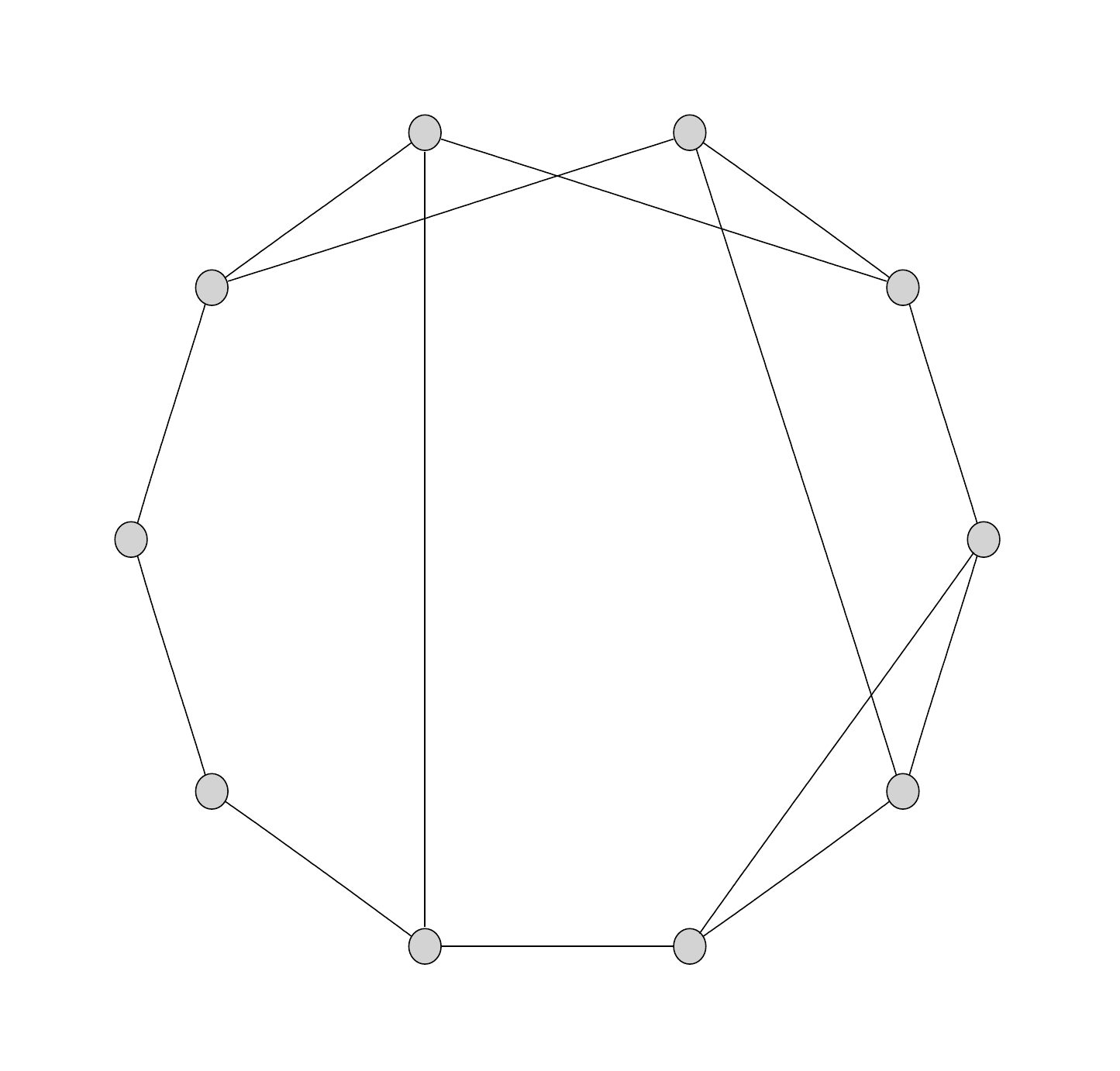_t}}
 \scalebox{0.26}{\input{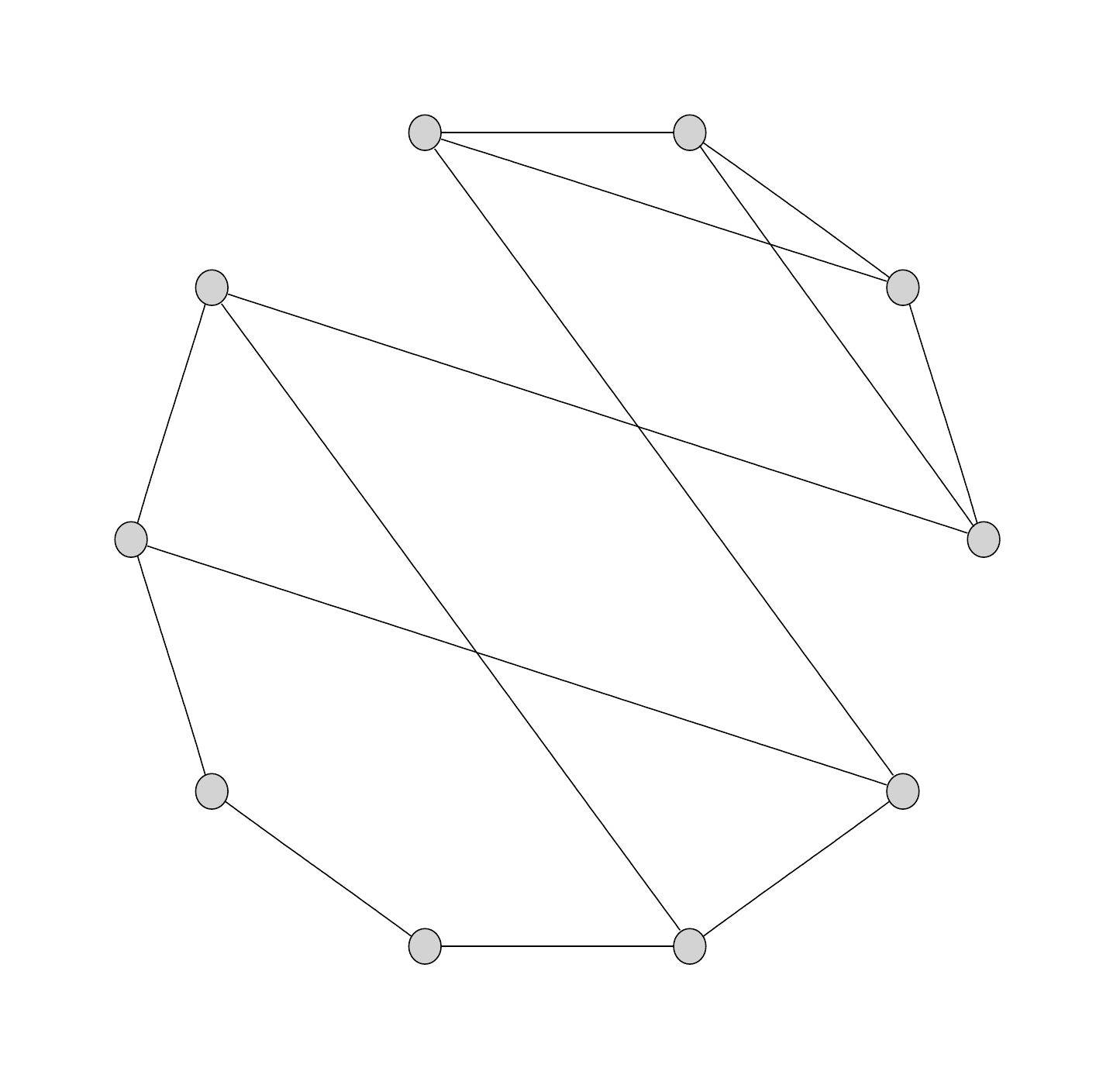_t}} \\
\scalebox{0.26}{\input{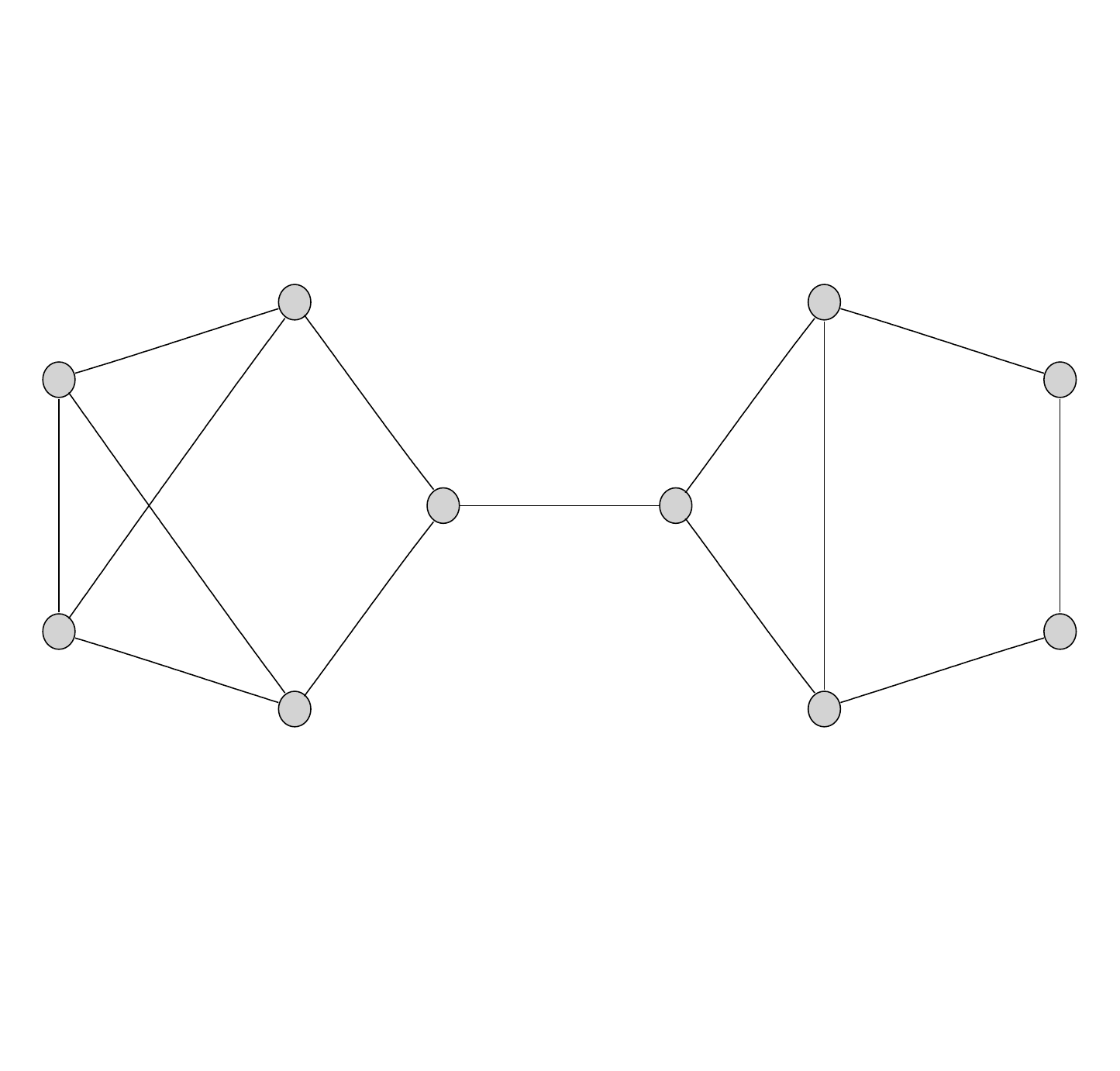_t}}
 \scalebox{0.26}{\input{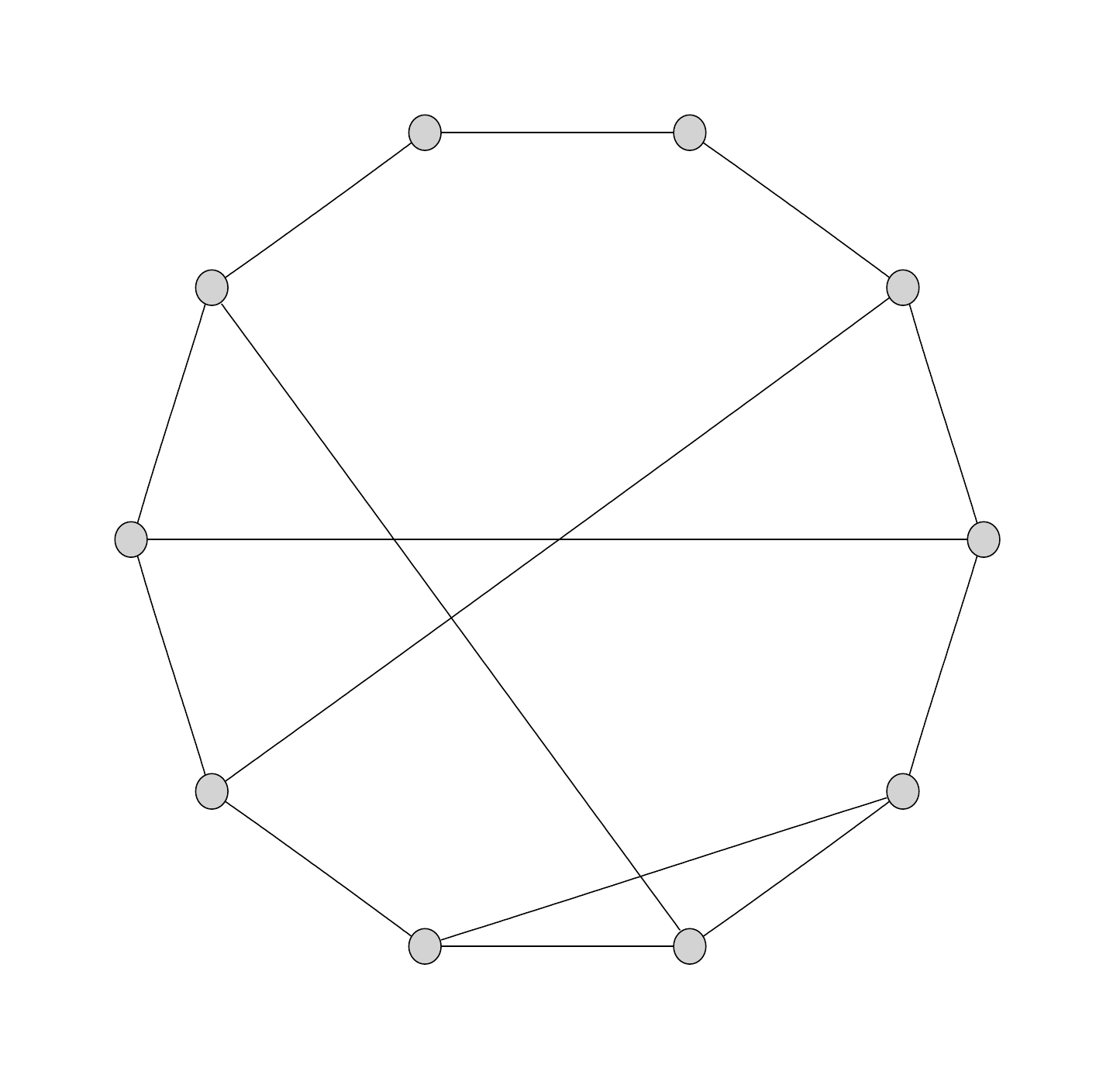_t}}
 \scalebox{0.26}{\input{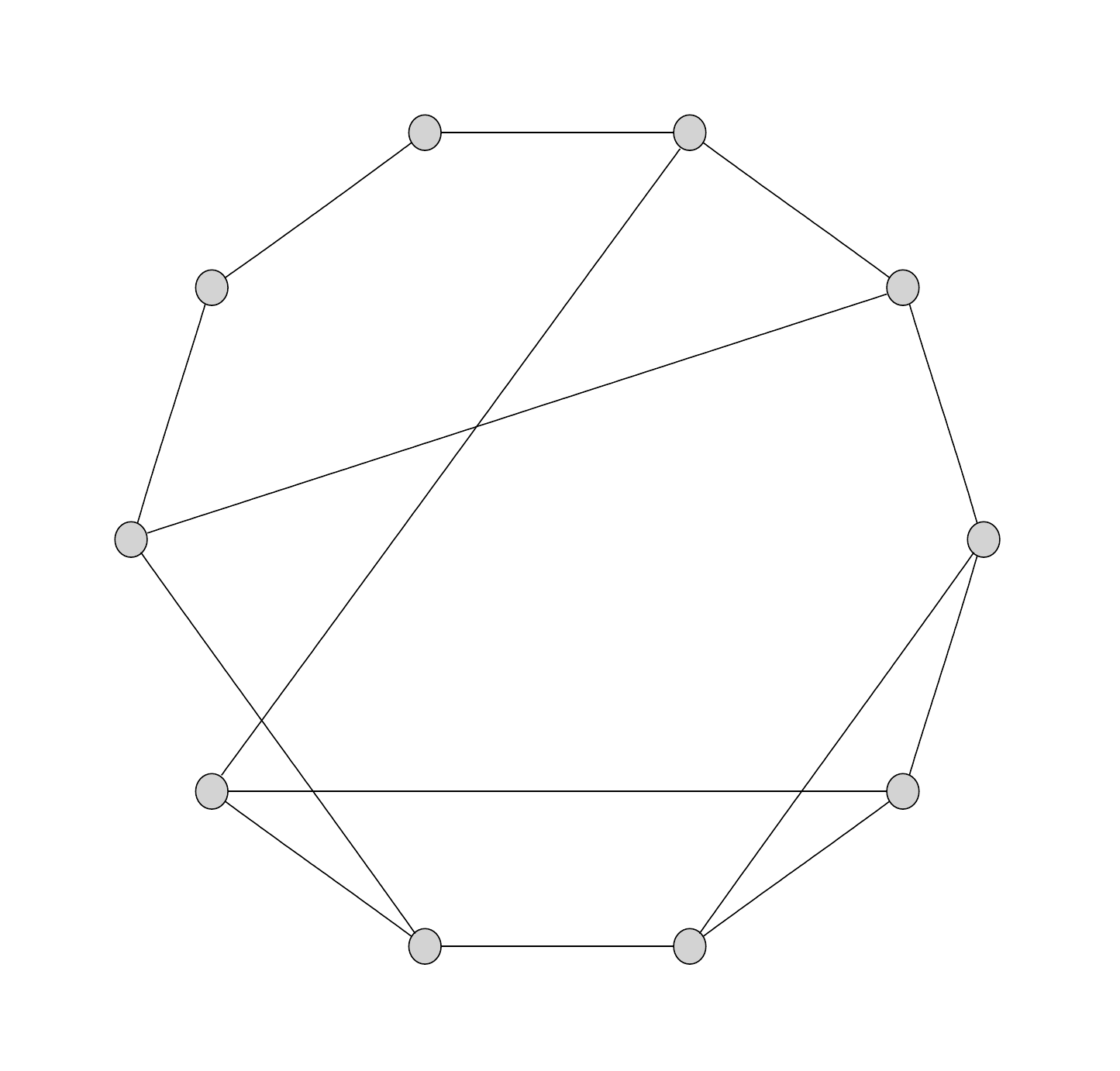_t}} \\
\scalebox{0.26}{\input{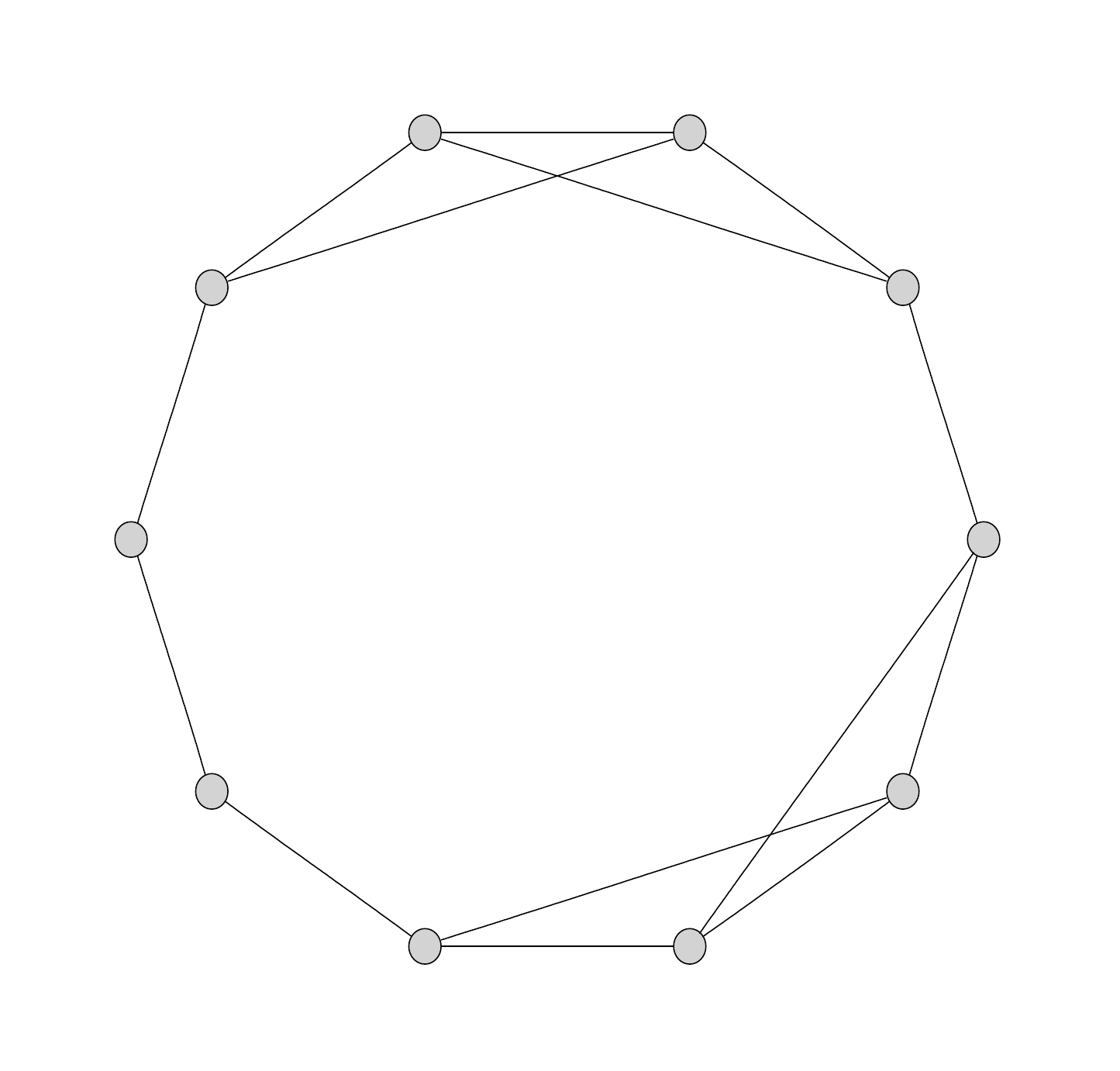_t}} 
 \scalebox{0.26}{\input{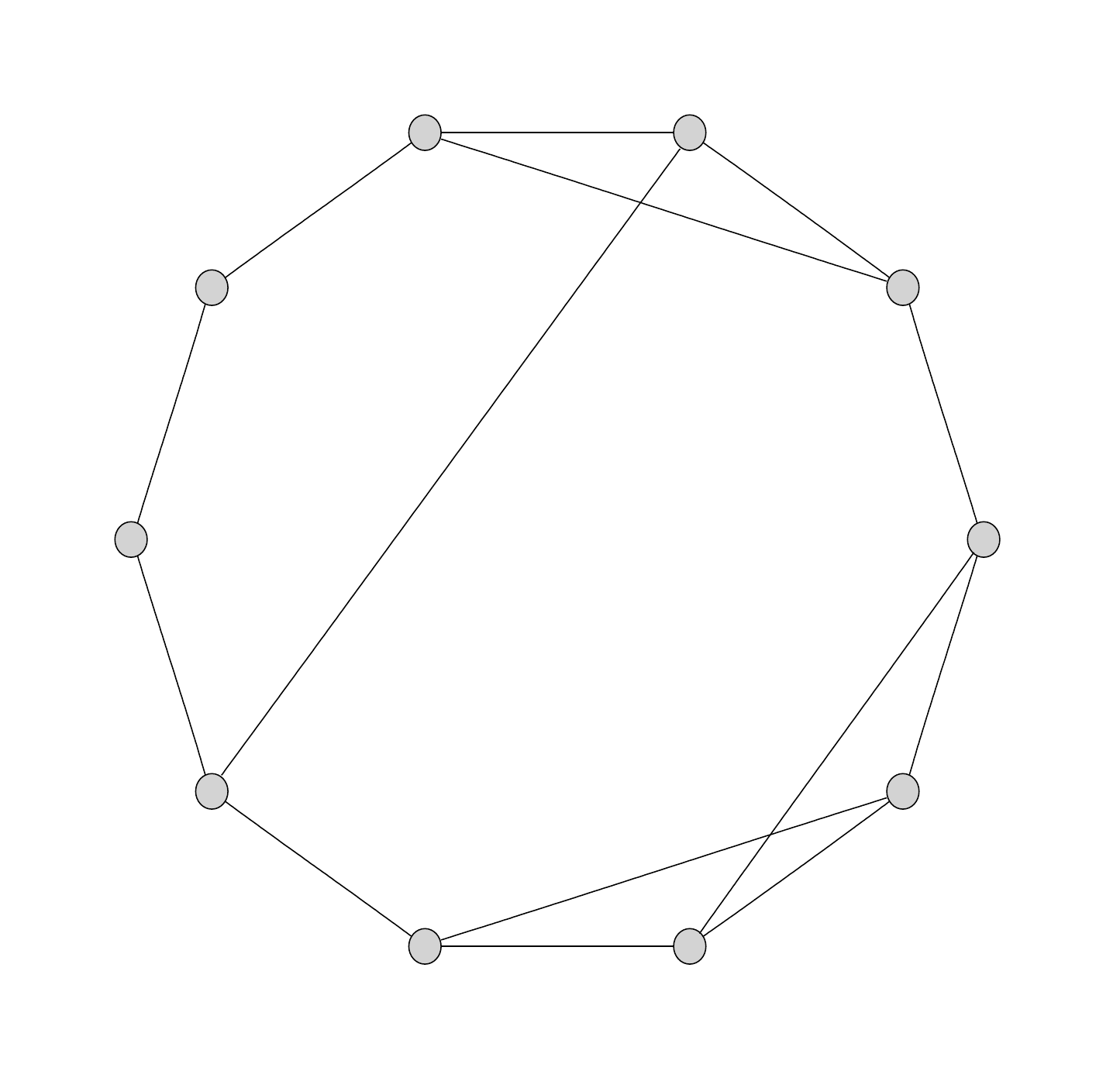_t}}
 \scalebox{0.26}{\input{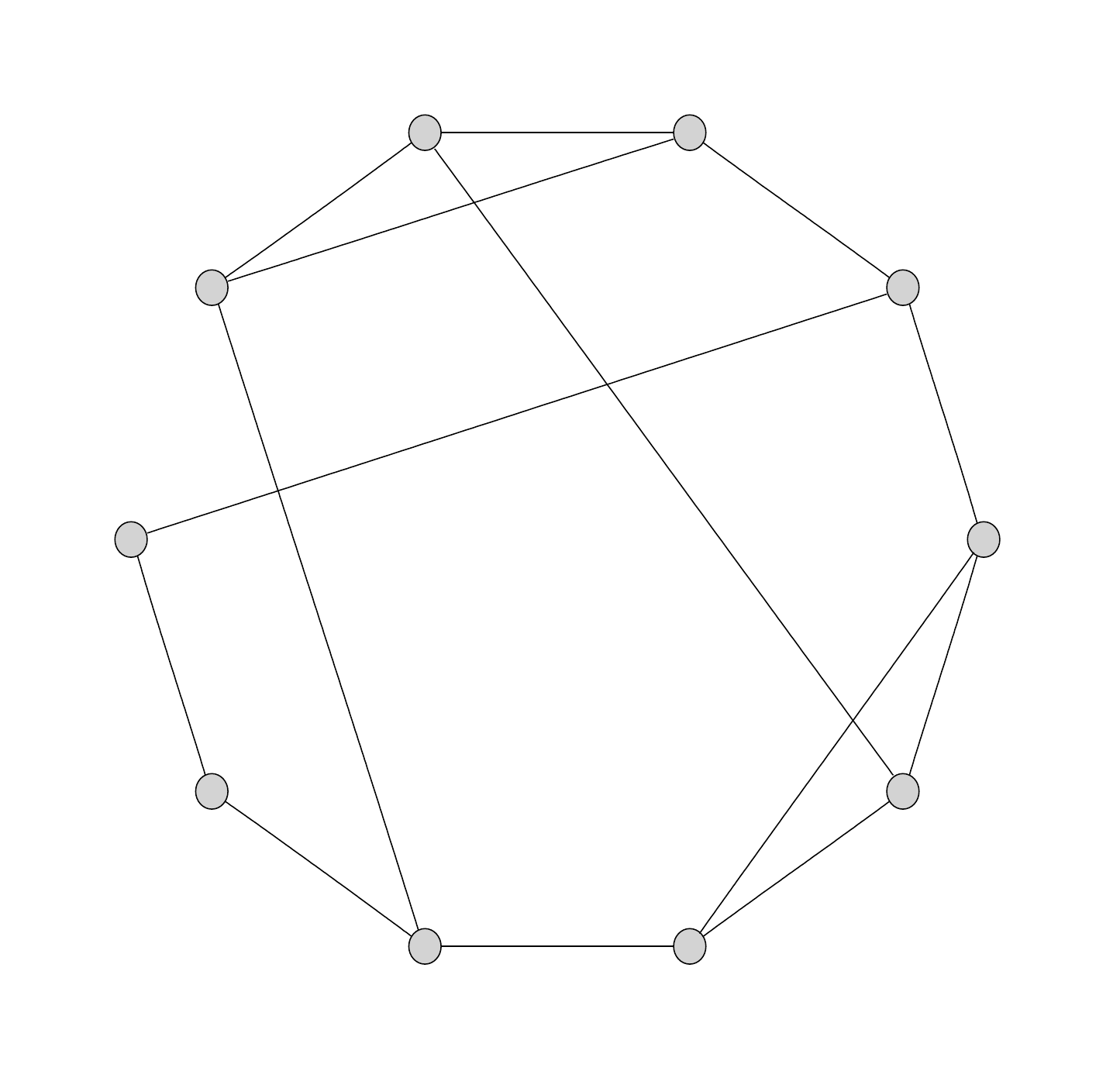_t}} \\
\scalebox{0.26}{\input{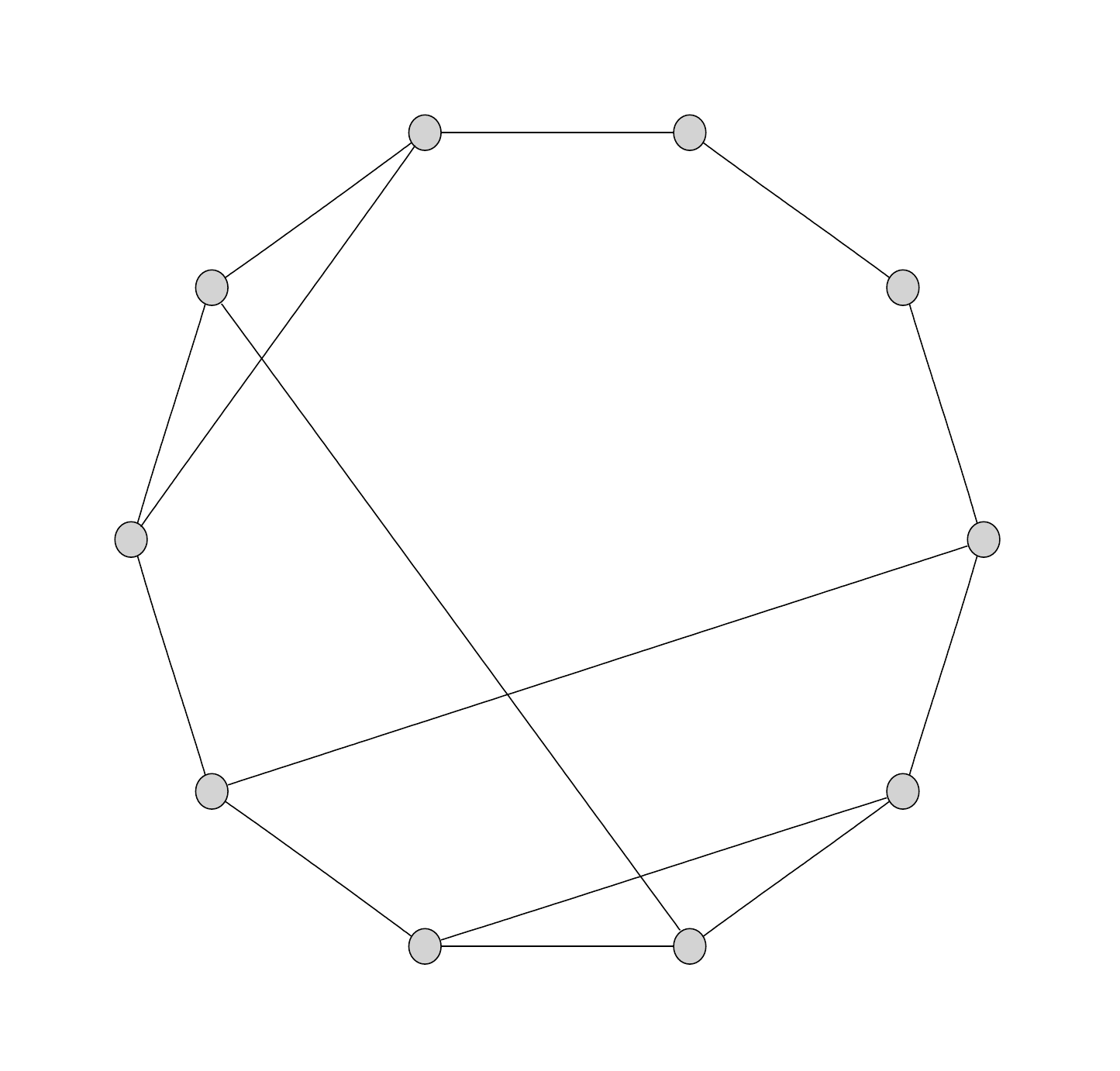_t}}
\scalebox{0.26}{\input{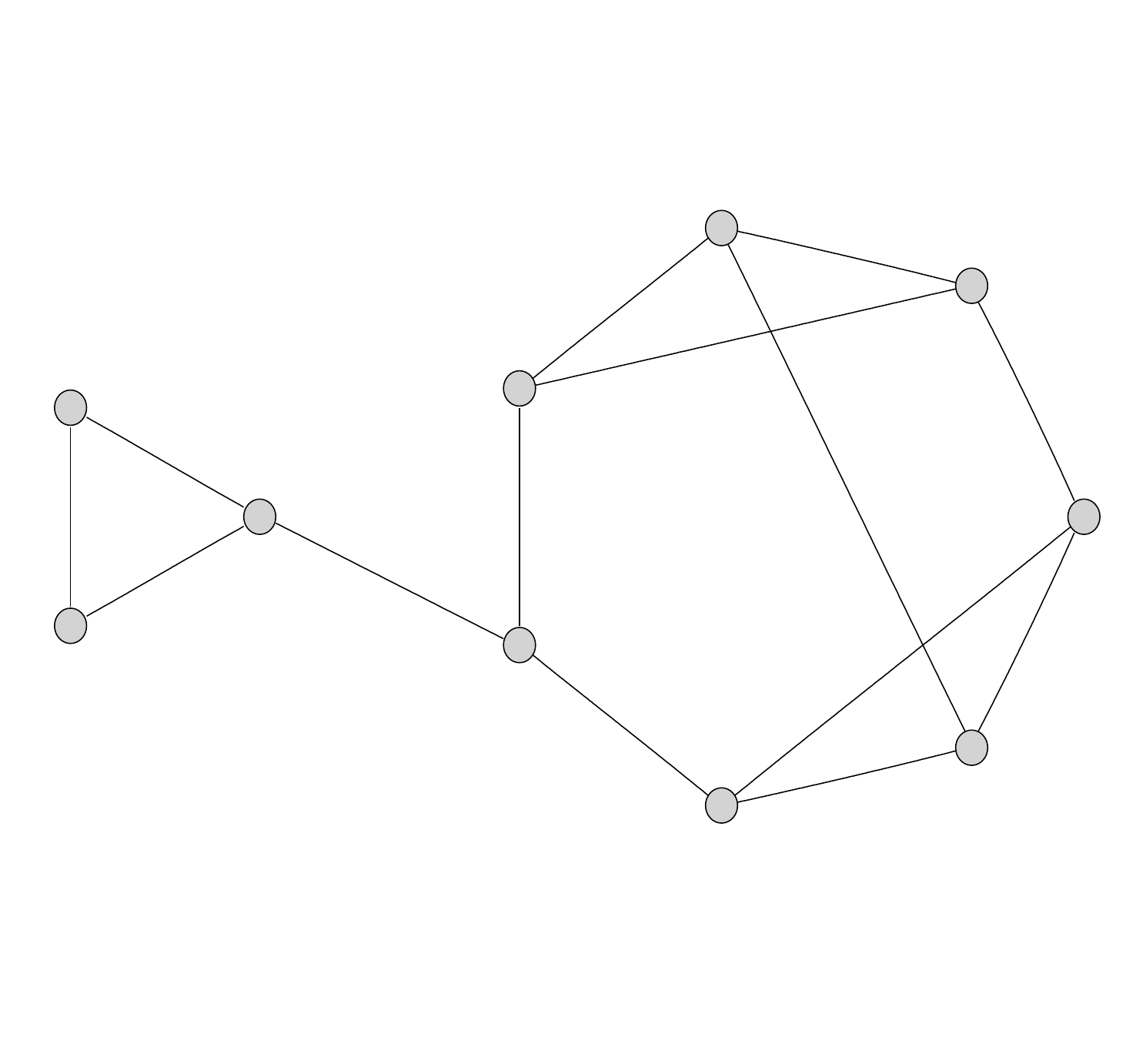_t}}
\hspace{2mm} \scalebox{0.26}{\input{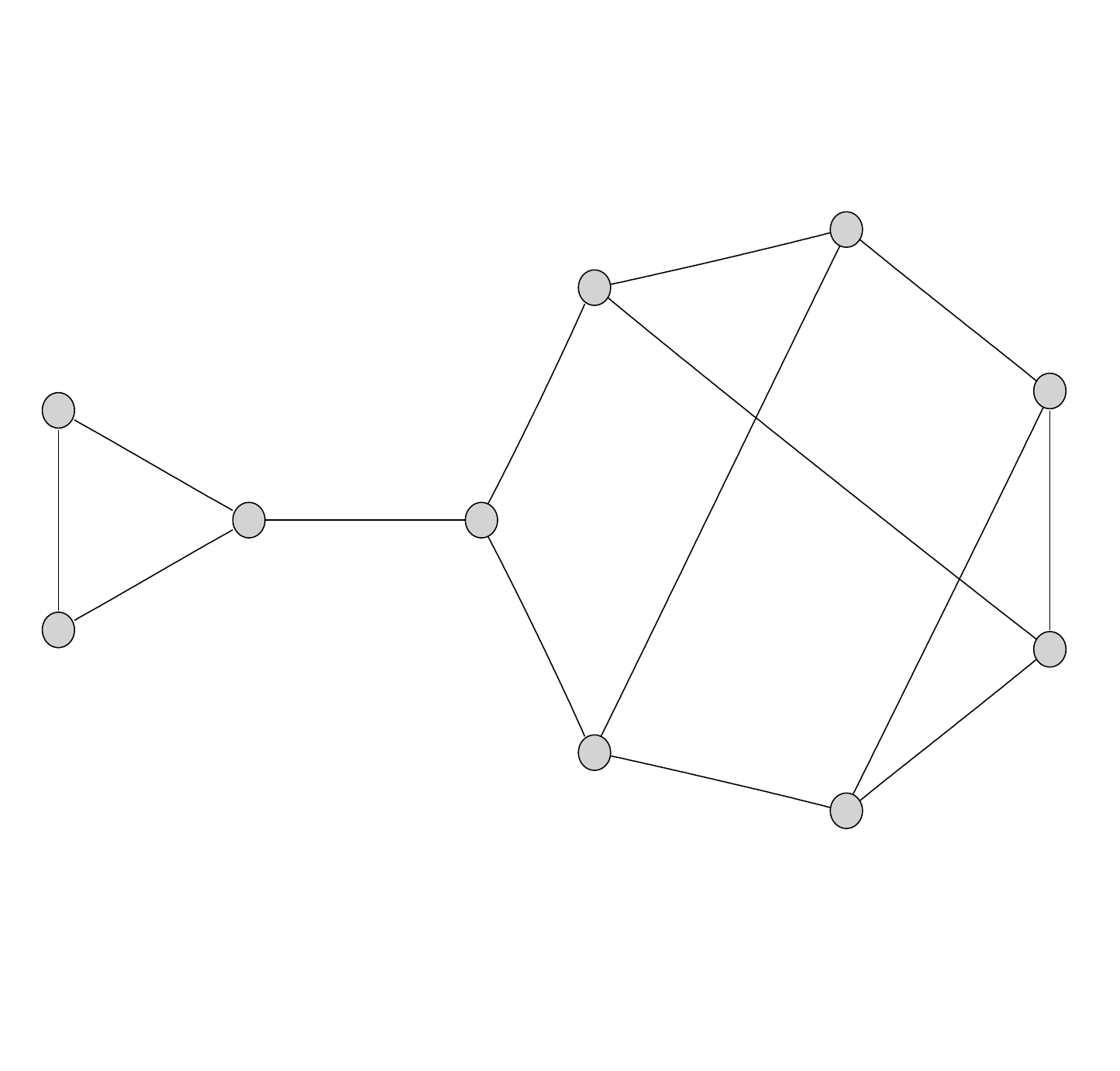_t}} \\
\scalebox{0.26}{\input{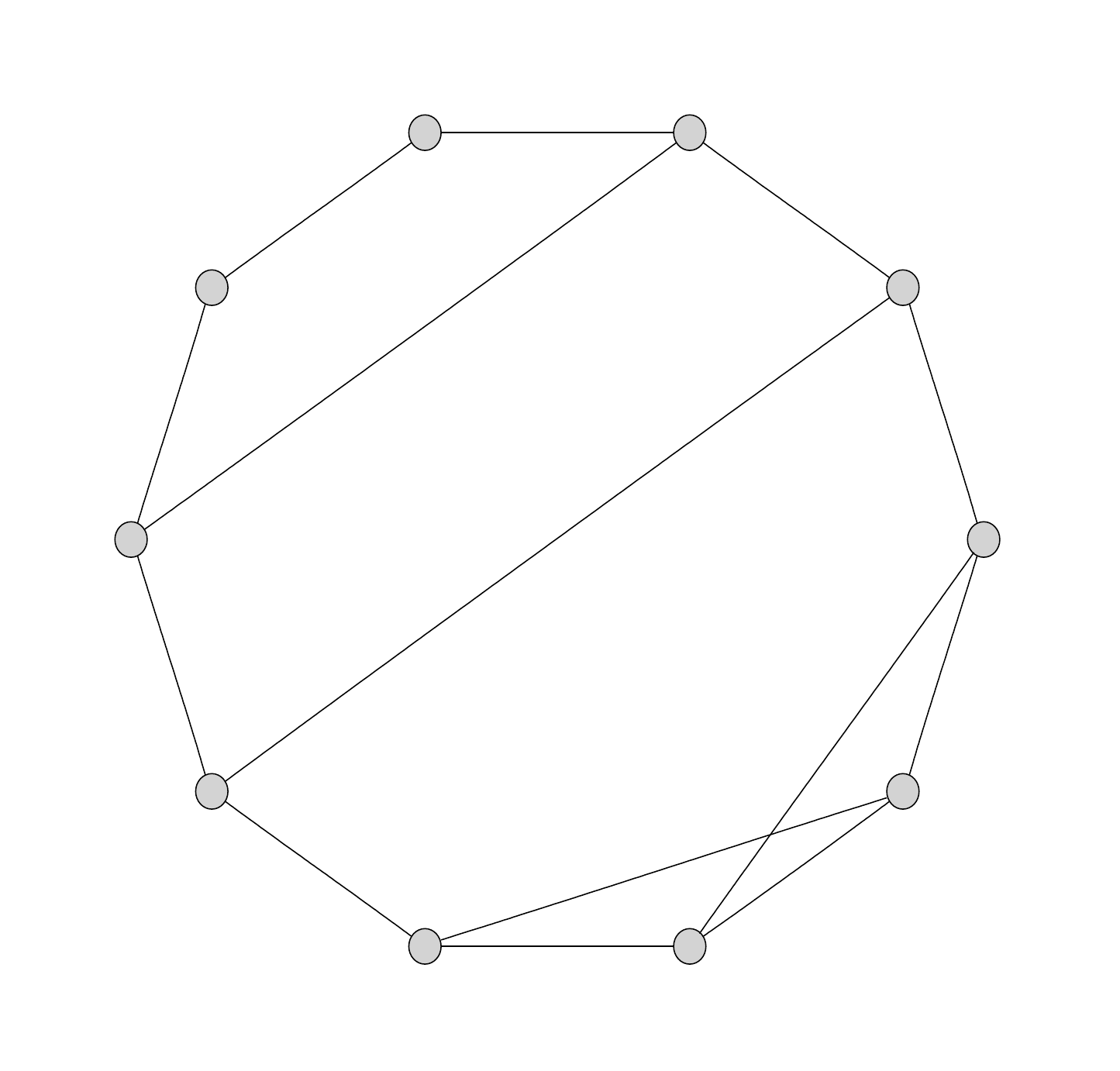_t}}
 \scalebox{0.26}{\input{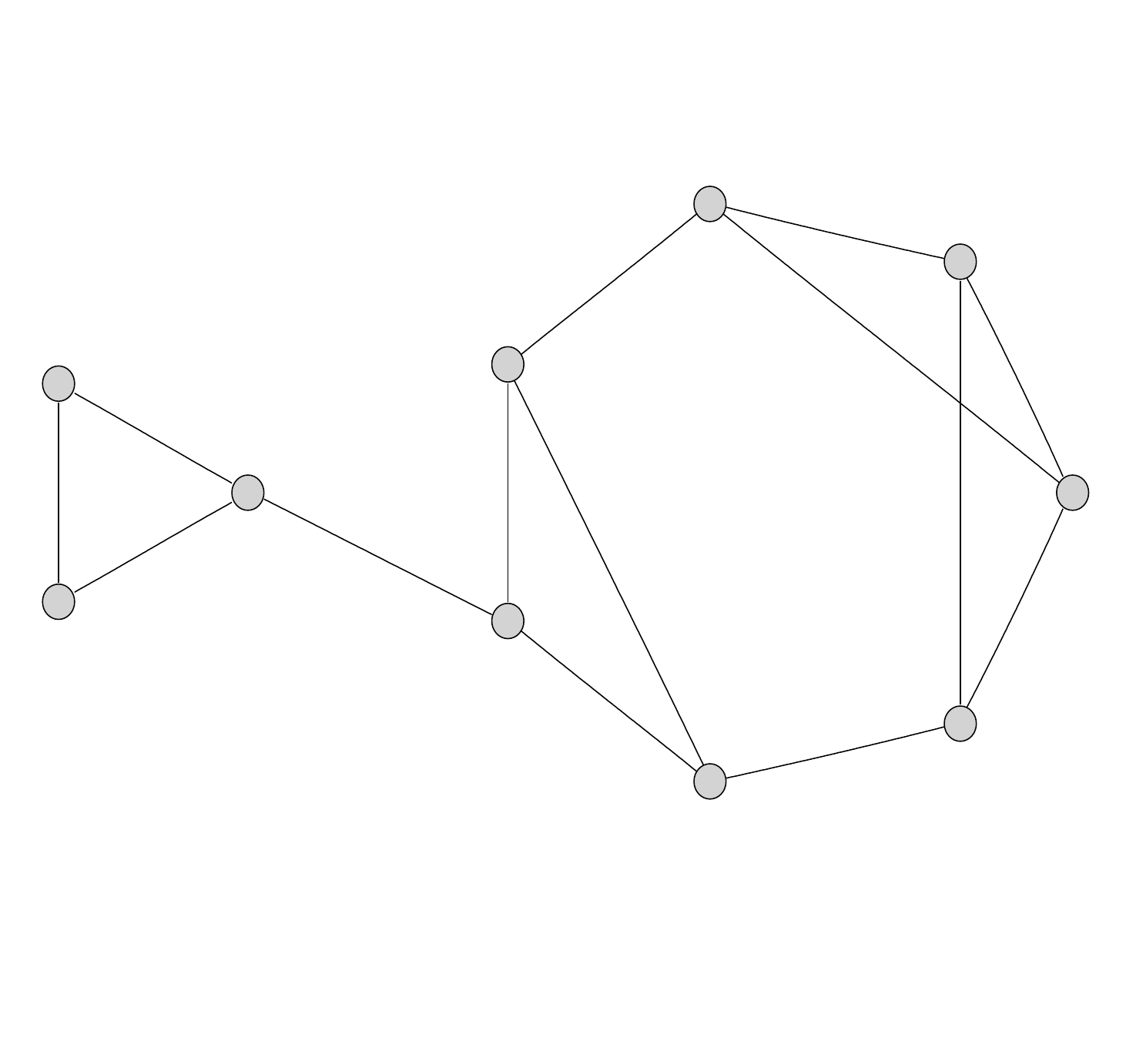_t}}
\end{center}
\bibliographystyle{plainnat} 
\bibliography{natbibFull}

\def\cprime{$'$}
\begin{thebibliography}{33}
\providecommand{\natexlab}[1]{#1}
\providecommand{\url}[1]{\texttt{#1}}
\expandafter\ifx\csname urlstyle\endcsname\relax
  \providecommand{\doi}[1]{doi: #1}\else
  \providecommand{\doi}{doi: \begingroup \urlstyle{rm}\Url}\fi

\bibitem[Appel and Haken(1977)]{MR0543792}
K.~Appel and W.~Haken.
\newblock Every planar map is four colorable. {I}. {D}ischarging.
\newblock \emph{Illinois J. Math.}, 21\penalty0 (3):\penalty0 429--490, 1977.

\bibitem[Appel et~al.(1977)Appel, Haken, and Koch]{MR0543793}
K.~Appel, W.~Haken, and J.~Koch.
\newblock Every planar map is four colorable. {II}. {R}educibility.
\newblock \emph{Illinois J. Math.}, 21\penalty0 (3):\penalty0 491--567, 1977.

\bibitem[Bollob{\'a}s(1978)]{MR506522}
B.~Bollob{\'a}s.
\newblock \emph{Extremal graph theory}, volume~11 of \emph{London Mathematical
  Society Monographs}.
\newblock Academic Press Inc. [Harcourt Brace Jovanovich Publishers], London,
  1978.

\bibitem[Bondy and Murty(2008)]{BondyAndMurty2008}
J.~A. Bondy and U.~S.~R. Murty.
\newblock \emph{Graph theory}, volume 244 of \emph{Graduate Texts in
  Mathematics}.
\newblock Springer, New York, 2008.

\bibitem[Chartrand et~al.(1971)Chartrand, Geller, and Hedetniemi]{MR0285427}
G.~Chartrand, D.~Geller, and S.~Hedetniemi.
\newblock Graphs with forbidden subgraphs.
\newblock \emph{J. Combinatorial Theory Ser. B}, 10:\penalty0 12--41, 1971.

\bibitem[Diestel(2006)]{DiestelGT2006}
R.~Diestel.
\newblock \emph{{Graph theory. 3rd revised and extended ed.}}
\newblock {Berlin: Springer}, 2006.

\bibitem[Dirac(1961)]{MR0139160}
G.~A. Dirac.
\newblock A contraction theorem for abstract graphs.
\newblock \emph{Math. Ann.}, 144:\penalty0 93--96, 1961.

\bibitem[Dirac(1964)]{MR0162241}
G.~A. Dirac.
\newblock On the structure of {$5$}- and {$6$}-chromatic abstract graphs.
\newblock \emph{J. Reine Angew. Math.}, 214/215:\penalty0 43--52, 1964.

\bibitem[Erd\H{o}s(1968)]{TihanyProblem2}
P.~Erd\H{o}s.
\newblock Problem 2.
\newblock In \emph{Theory of Graphs (Proc. Colloq., Tihany, 1966)}, page 361.
  Academic Press, New York, 1968.

\bibitem[Hadwiger(1943)]{MR0012237}
H.~Hadwiger.
\newblock \"{U}ber eine {K}lassifikation der {S}treckenkomplexe.
\newblock \emph{Vierteljschr. Naturforsch. Ges. Z\"urich}, 88:\penalty0
  133--142, 1943.

\bibitem[Jakobsen(1970)]{MR0295963}
I.~T. Jakobsen.
\newblock \emph{On certain homomorphism-properties of graphs with applications
  to the conjecture of {H}adwiger}.
\newblock Matematisk Institut, Aarhus Universitet, Aarhus, 1970.
\newblock Doctoral dissertation, University of London, 1971, Various
  Publications Series, No. 15.

\bibitem[Jakobsen(1971)]{MR0340108}
I.~T. Jakobsen.
\newblock A homomorphism theorem with an application to the conjecture of
  {H}adwiger.
\newblock \emph{Studia Sci. Math. Hungar.}, 6:\penalty0 151--160, 1971.

\bibitem[Jakobsen(1972)]{MR0323641}
I.~T. Jakobsen.
\newblock On certain homomorphism properties of graphs. {I}.
\newblock \emph{Math. Scand.}, 31:\penalty0 379--404, 1972.

\bibitem[Jensen and Toft(1995)]{JensenToft95}
T.~R. Jensen and B.~Toft.
\newblock \emph{Graph coloring problems}.
\newblock Wiley-Interscience Series in Discrete Mathematics and Optimization.
  John Wiley \& Sons Inc., New York, 1995.
\newblock A Wiley-Interscience Publication.

\bibitem[J{\o}rgensen(1994)]{MR1283309}
L.~K. J{\o}rgensen.
\newblock Contractions to {$K\sb 8$}.
\newblock \emph{J. Graph Theory}, 18\penalty0 (5):\penalty0 431--448, 1994.

\bibitem[Kawarabayashi and Toft(2005)]{MR2141662}
K.~Kawarabayashi and B.~Toft.
\newblock Any 7-chromatic graph has {$K\sb 7$} or {$K\sb {4,4}$} as a minor.
\newblock \emph{Combinatorica}, 25\penalty0 (3):\penalty0 327--353, 2005.

\bibitem[Kawarabayashi et~al.(2010)Kawarabayashi, Pedersen, and
  Toft]{KawarabayashiPedersenToftEJC2010}
K.~Kawarabayashi, A.~S. Pedersen, and B.~Toft.
\newblock Double-critical graphs and complete minors.
\newblock \emph{Electron. J. Combin.}, 17\penalty0 (1):\penalty0 Research Paper
  87, 27 pp., 2010.

\bibitem[Khosrovshahi et~al.(2001)Khosrovshahi, Maysoori, and
  Tayfeh-Rezaie]{MR1849620}
G.~B. Khosrovshahi, Ch. Maysoori, and B.~Tayfeh-Rezaie.
\newblock A note on 3-factorizations of {$K\sb {10}$}.
\newblock \emph{J. Combin. Des.}, 9\penalty0 (5):\penalty0 379--383, 2001.

\bibitem[Mader(1968)]{MR0229550}
W.~Mader.
\newblock Homomorphies\"atze f\"ur {G}raphen.
\newblock \emph{Math. Ann.}, 178:\penalty0 154--168, 1968.

\bibitem[Mader(1998)]{MR1722261}
W.~Mader.
\newblock {$3n-5$} edges do force a subdivision of {$K\sb 5$}.
\newblock \emph{Combinatorica}, 18\penalty0 (4):\penalty0 569--595, 1998.

\bibitem[McKay(2009)]{nauty}
B.~McKay.
\newblock The nauty page.
\newblock \url{http://cs.anu.edu.au/~bdm/nauty/}, 2009.

\bibitem[Mozhan(1987)]{MR995391}
N.~N. Mozhan.
\newblock Twice critical graphs with chromatic number five.
\newblock \emph{Metody Diskret. Analiz.}, \penalty0 (46):\penalty0 50--59, 73,
  1987.

\bibitem[Read and Wilson(1998)]{MR1692656}
R.~C. Read and R.~J. Wilson.
\newblock \emph{An atlas of graphs}.
\newblock Oxford Science Publications. The Clarendon Press Oxford University
  Press, New York, 1998.

\bibitem[Robertson et~al.(1993)Robertson, Seymour, and Thomas]{MR1238823}
N.~Robertson, P.~Seymour, and R.~Thomas.
\newblock Hadwiger's conjecture for {$K\sb 6$}-free graphs.
\newblock \emph{Combinatorica}, 13\penalty0 (3):\penalty0 279--361, 1993.

\bibitem[Robertson et~al.(1997)Robertson, Sanders, Seymour, and
  Thomas]{MR1441258}
N.~Robertson, D.~Sanders, P.~Seymour, and R.~Thomas.
\newblock The four-colour theorem.
\newblock \emph{J. Combin. Theory Ser. B}, 70\penalty0 (1):\penalty0 2--44,
  1997.

\bibitem[Song(2005)]{MR2171368}
Z.~Song.
\newblock The extremal function for {$K\sb 8\sp -$} minors.
\newblock \emph{J. Combin. Theory Ser. B}, 95\penalty0 (2):\penalty0 300--317,
  2005.

\bibitem[Stiebitz(1987)]{MR882614}
M.~Stiebitz.
\newblock {$K\sb 5$} is the only double-critical {$5$}-chromatic graph.
\newblock \emph{Discrete Math.}, 64\penalty0 (1):\penalty0 91--93, 1987.

\bibitem[Stiebitz(1988)]{MR1221590}
M.~Stiebitz.
\newblock On {$k$}-critical {$n$}-chromatic graphs.
\newblock In \emph{Combinatorics (Eger, 1987)}, volume~52 of \emph{Colloq.
  Math. Soc. J\'anos Bolyai}, pages 509--514. North-Holland, Amsterdam, 1988.

\bibitem[Toft(1995)]{MR1373659}
B.~Toft.
\newblock Colouring, stable sets and perfect graphs.
\newblock In \emph{Handbook of combinatorics, {V}ol.\ 1,\ 2}, pages 233--288.
  Elsevier, Amsterdam, 1995.

\bibitem[Toft(1996)]{MR1411244}
B.~Toft.
\newblock A survey of {H}adwiger's conjecture.
\newblock \emph{Congr. Numer.}, 115:\penalty0 249--283, 1996.

\bibitem[Wagner(1937)]{MR1513158}
K.~Wagner.
\newblock \"{U}ber eine {E}igenschaft der ebenen {K}omplexe.
\newblock \emph{Math. Ann.}, 114\penalty0 (1):\penalty0 570--590, 1937.

\bibitem[Wagner(1960)]{MR0121309}
K.~Wagner.
\newblock Bemerkungen zu {H}adwigers {V}ermutung.
\newblock \emph{Math. Ann.}, 141:\penalty0 433--451, 1960.

\bibitem[Woodall(1990)]{MR1210067}
D.~Woodall.
\newblock Improper colourings of graphs.
\newblock In \emph{Graph colourings ({M}ilton {K}eynes, 1988)}, volume 218 of
  \emph{Pitman Res. Notes Math. Ser.}, pages 45--63. Longman Sci. Tech.,
  Harlow, 1990.

\end{thebibliography}
\end{document}